\newcommand{\pushright}[1]{\ifmeasuring@#1\else\omit\hfill$\displaystyle#1$\fi\ignorespaces}
\newcommand{\pushleft}[1]{\ifmeasuring@#1\else\omit$\displaystyle#1$\hfill\fi\ignorespaces}
\newcommand\C{\mathbb{C}}
\newcommand\Z{\mathbb{Z}}
\newcommand\R{\mathbb{R}}
\newcommand\N{\mathbb{N}}
\newcommand\F{\mathbb{F}}
\newcommand\kk{\Bbbk}
\newcommand\blambda{{\boldsymbol{\lambda}}}
\newcommand\bmu{{\boldsymbol{\mu}}}
\newcommand\fh{\mathfrak{h}}
\newcommand\cF{\mathcal{F}}
\newcommand\partition{\mathcal{P}}
\newcommand\cH{\mathcal{H}}
\newcommand\cB{\mathcal{B}}
\newcommand\bF{\mathbf{F}}
\newcommand\bc{\mathbf{c}}
\newcommand\one{\mathbf{1}}
\newcommand\Sy{\mathrm{Sym}}
\newcommand\op{\mathrm{op}}
\newcommand\gr{\mathrm{gr}}
\newcommand\tr{\mathrm{tr}}
\newcommand\sQ{\mathsf{Q}}
\newcommand\rot{\textup{rot}}   
\newcommand{\md}{\textup{-mod}}
\newcommand{\pmd}{\textup{-pmod}}
\newcommand\chern{\xi_B}        
\newcommand\ts{\textstyle}
\newcommand\even{\textup{even}}
\newcommand\odd{\textup{odd}}
\newcommand\redcircle[1]{\filldraw[fill=white, draw=red] #1 circle (3pt)}
\newcommand\bluedot[1]{\filldraw[blue] #1 circle (2pt)}
\tikzset{anchorbase/.style={>=stealth,baseline={([yshift=-0.5ex]current bounding box.center)}}}
\tikzset{->-/.style={decoration={
  markings,
  mark=at position #1 with {\arrow{>}}},postaction={decorate}}}
\tikzset{-<-/.style={decoration={
  markings,
  mark=at position #1 with {\arrow{<}}},postaction={decorate}}}
\DeclareMathOperator{\asc}{asc}
\DeclareMathOperator{\End}{End}
\DeclareMathOperator{\END}{END}
\DeclareMathOperator{\ev}{ev}
\DeclareMathOperator{\grdim}{grdim} 
\DeclareMathOperator{\Hom}{Hom}
\DeclareMathOperator{\HOM}{HOM}
\DeclareMathOperator{\id}{id}
\DeclareMathOperator{\Ob}{Ob}       
\DeclareMathOperator{\Span}{Span}
\DeclareMathOperator{\str}{str}
\DeclareMathOperator{\Sym}{Sym}
\DeclareMathOperator{\Tr}{Tr}       
\newtheorem{theo}{Theorem}[section]
\newtheorem{prop}[theo]{Proposition}
\newtheorem{lem}[theo]{Lemma}
\newtheorem{cor}[theo]{Corollary}
\theoremstyle{definition}
\newtheorem{rem}[theo]{Remark}
\newtheorem{eg}[theo]{Example}
\numberwithin{equation}{section}
  \newcommand{\alcomments}[1]{
    \ \\
    {\color{red}
      \textbf{AL:} #1
    }
    \\
    }
  \newcommand{\drcomments}[1]{
    \ \\
    {\color{red}
      \textbf{DR:} #1
    }
    \\
    }
  \newcommand{\ascomments}[1]{
    \ \\
    {\color{red}
      \textbf{AS:} #1
    }
    \\
    }
  \newcommand{\alcomments}[1]{}
  \newcommand{\drcomments}[1]{}
  \newcommand{\ascomments}[1]{}
  \newcommand{\details}[1]{
      \ \\
      {\color{OliveGreen}
        \textbf{Details:} #1
      }
      \\
  }
  \newcommand{\details}[1]{}
  \newcommand{\prelim}{\textsc{Preliminary version} \bigskip}
  \newcommand{\prelim}{}
\begin{document}
%

\title{A graphical calculus for the Jack inner product on symmetric functions}

\author{Anthony Licata}
\address{A.~Licata: Mathematical Sciences Institute, Australian National University}
\urladdr{\url{http://maths-people.anu.edu.au/~licatat/Home.html}}
\email{amlicata@gmail.com}
\thanks{The first author was supported by a he first author was supported by Discovery Project grant
DP140103821 from the Australian Research Council.}

\author{Daniele Rosso}
\address{D.~Rosso: Department of Mathematics, University of California Riverside}
\urladdr{\url{http://pages.iu.edu/~drosso/}}
\email{drosso@iu.edu}

\author{Alistair Savage}
\address{A.~Savage: Department of Mathematics and Statistics, University of Ottawa}
\urladdr{\url{http://alistairsavage.ca}}
\email{alistair.savage@uottawa.ca}
\thanks{The third author was supported by Discovery Grant RGPIN-2017-03854 from the Natural Sciences and Engineering Research Council of Canada.}

\begin{abstract}
  Starting from a graded Frobenius superalgebra $B$, we consider a graphical calculus of $B$-decorated string diagrams.  From this calculus we produce algebras consisting of closed planar diagrams and of closed annular diagrams.  The action of annular diagrams on planar diagrams can be used to make clockwise (or counterclockwise) annular diagrams into an inner product space.  Our main theorem identifies this space with the space of symmetric functions equipped with the Jack inner product at Jack parameter $\dim B_\even - \dim B_\odd$.  In this way, we obtain a graphical realization of that inner product space.
\end{abstract}

\subjclass[2010]{05E05, 18D10, 17B65}
\keywords{Symmetric functions, Jack inner product, categorification, Heisenberg algebra, graded Frobenius superalgebra, Fock space, wreath product algebra}

\prelim

\maketitle
\thispagestyle{empty}

\tableofcontents

%
\section{Introduction}
%

Let $B$ be a nonnegatively graded Frobenius superalgebra over an algebraically closed field $\F$ of characteristic 0 (for example, the cohomology over $\F$ of a compact connected manifold).  Inspired by constructions of Khovanov in \cite{Kho14} and Cautis and the first author in \cite{CL12}, the second and third author, in \cite{RS15}, associated to $B$ a graded pivotal monoidal category $\cH_B^*$.  The objects of $\cH_B^*$ are formal direct sums of compact oriented 0-manifolds, and the morphisms are linear combinations of immersed oriented planar 1-manifolds, decorated by elements of the Frobenius algebra $B$, and subject to certain local relations.  Associated to $\cH_B^*$ are at least two potentially interesting algebraic objects:
\begin{itemize}
  \item the center $Z(\cH_B^*)$, which is the endomorphism algebra of the monoidal unit of the category $\cH_B^*$, is a graded supercommutative infinite-dimensional algebra, whose elements are linear combinations of immersed oriented closed 1-manifolds, decorated by elements of $B$, and subject to the local relations of the graphical calculus of $\cH_B^*$;
  \item the trace, or zeroth Hochschild homology, $\Tr(\cH_B^*)$ of $\cH_B^*$ is a graded noncommutative  infinite-dimensional algebra, whose elements are linear combinations of immersed closed annular 1-manifolds, decorated by elements of $B$, and subject to the local relations of the graphical calculus.
\end{itemize}
(We refer to Sections~\ref{sec:Heis-cat} and ~\ref{sec:trace} for the precise definitions of the monoidal category $\cH^*_B$ and the algebras $Z(\cH^*_B)$ and $\Tr(\cH^*_B)$.)  The algebra of annular diagrams $\Tr(\cH^*_B)$ acts linearly on the space of planar diagrams $Z(\cH^*_B)$.

The importance of the trace in diagrammatic categorification is first emphasized in the work of Beliakova--Habiro--Lauda with both Webster \cite{BHLW14} and with Guliyev \cite{BGHL14}.  In particular, the fact that the trace acts on the center of a pivotal monoidal category goes back at least to \cite{BGHL14}.  Nevertheless, at present not much is known about the algebra $\Tr(\cH^*_B)$, except in the case $B = \F$, where it was computed in \cite{CLLS15}.  The authors there suggest that, in the general case, $\Tr(\cH^*_B)$ should be understood in relation to the vertex algebra associated to the lattice $K_0(B)$, equipped with its Euler form.  We do not take up the computation of $\Tr(\cH^*_B)$ here.  However, there is a subcategory $\cH_B$ of the monoidal category $\cH^*_B$, defined by considering only those morphisms of degree zero, and the Hochschild homology $\Tr(\cH_B)$ is naturally a subalgebra of $\Tr(\cH^*_B)$ (see Proposition~\ref{prop:trace-injection}).  It turns out that there is a close relationship between $\Tr(\cH_B)$ and the ring of symmetric functions, and the goal of the present paper is to explain this relationship.

In order to explain the main result of the paper, we assume for simplicity here that the degree zero subalgebra of the graded Frobenius superalgebra $B$ is one-dimensional, that $B$ is not concentrated in degree zero, and that the Nakayama automorphism of $B$ is trivial. (The constructions in the body of the paper are written in greater generality than this.)  Our first main result, Theorem~\ref{theo:trace-identification}, states that the algebra of annular diagrams $\Tr(\cH_B)$ is isomorphic to a Heisenberg algebra $\fh_B$.  As a consequence, we obtain canonical isomorphisms between the subalgebras $\Tr(\cH_B)^+$ and $\Tr(\cH_B)^-$ of clockwise and counterclockwise annular diagrams and the algebra $\Sym$ of symmetric functions:
\begin{equation} \label{eq:intro-trpm-Sym-isom}
  \Tr(\cH_B)^\pm \cong \Sym.
\end{equation}

We then use the action of $\Tr(\cH_B)$ on $Z(\cH_B^*)$ to define a pairing
\[
	\langle -, - \rangle_B \colon \Tr(\cH_B)^+ \times \Tr(\cH_B)^- \longrightarrow \F.
\]
This pairing may be defined in two equivalent ways: graphically using the annular/planar diagrammatic realisations of
$\Tr(\cH_B)$ and $Z(\cH^*_B)$, or categorically, using the identifications of $\Tr(\cH^*_B)$ and $Z(\cH^*_B)$ as the Hochschild homology and center of the Heisenberg category $\cH^*_B$.  Our second main result,
Theorem~\ref{theo:Jack-diagrammatic}, identifies the pairing $\langle -, - \rangle_B$ with a bilinear form on $\Sym$ under the isomorphisms~\eqref{eq:intro-trpm-Sym-isom}.  More precisely, we prove that, after identifying $\Tr(\cH_B)^+$ and $\Tr(\cH_B)^-$ with $\Sym$, the pairing $\langle -, - \rangle_B$ is identified with the Jack pairing on $\Sym$.  That is, we have a commutative diagram
\[
  \xymatrix{
    \Tr(\cH_B)^- \times \Tr(\cH_B)^+ \ar[d]_{\cong} \ar[rr]^(0.65){\langle -, - \rangle_B} & & \F \\
    \Sym \times \Sym \ar[urr]_{\langle -, - \rangle_k} & &
  }
\]
where $\langle -, - \rangle_k \colon \Sym\times \Sym \to \F$ is the Jack pairing at Jack parameter $k = \dim B_\even - \dim B_\odd$.  (Here $B_\even$ and $B_\odd$ are the even and odd parts of $B$.)  Thus, we obtain a purely graphical realization of the algebra of symmetric functions with its Jack inner product at parameter $k = \dim B_\even - \dim B_\odd$.  In this sense, the Jack parameter $k$ is categorified by the graded Frobenius superalgebra $B$.

A special case, which is of some interest, is when the Frobenius algebra $B \cong \F[x]/(x^k)$ for a positive integer $k$.  The relationship between this Frobenius algebra and symmetric functions was investigated in \cite{CL16}, which used wreath products of $\F[x]/(x^k)$ with symmetric groups to give a categorical construction of a specialization of the Macdonald inner product (see \eqref{eq:specialized-Macdonald-pairing}).  The constructions of the current paper, on the other hand, give a parallel categorical construction of the Jack inner product, which is a further specialization of the inner products studied in \cite{CL16}.  Moreover, except for the Frobenius algebra $B$ itself, the basic input for the current paper is entirely graphical, involving only algebras of diagrams in the plane and the annulus.  So while the proof of Theorem~\ref{theo:Jack-diagrammatic} makes use of Heisenberg categories and their representation theory, the statement of Theorem~\ref{theo:Jack-diagrammatic} is purely a theorem in algebraic combinatorics.

This paper is organized as follows.  In Section~\ref{sec:lattice-Heis} we review the definition of lattice Heisenberg algebras and we recall the Heisenberg algebra $\fh_B$ associated to a graded Frobenius superalgebra $B$.  In Section~\ref{sec:Heis-cat} we recall the categories $\cH_B$ of \cite{RS15}, under some simplifying assumptions on $B$.  Then, in Section~\ref{sec:trace} we prove that $\Tr(\cH_B)$ injects into $\Tr(\cH_B^*)$ (after collapsing the grading and parity shifts), compute $\Tr(\cH_B)$, and identify the action of $\Tr(\cH_B)$ on the center $Z(\cH_B^*)$ with the Fock space representation of $\fh_B$.  In Section~\ref{sec:diagrammatic-pairing} we define our diagrammatic pairing and show that it categorifies the Jack bilinear form on symmetric functions.  In Section~\ref{sec:filtrations} we define a filtration on $Z(\cH_B^*)$ and prove some results about the associated graded algebra.  In particular, we relate the action of $\Tr(\cH_B^*)$ on $Z(\cH_B^*)$ to the multiplication in $Z(\cH_B^*)$.   In Section~\ref{sec:further-directions} we discuss some natural further directions of research suggested by the current work.

In Appendix~\ref{appendix}, we show, in a completely general setting, how one can obtain a number of different presentations of lattice Heisenberg algebras that appear naturally from categorification.  We also include some material related to lattice Heisenberg algebras arising from the Macdonald inner product on symmetric functions.  We deduce presentations of these algebras which may be of independent interest, and we explain how the limiting procedure that produces the Jack inner product from the Macdonald inner product can be interpreted in terms of incorporating a differential at the categorical level.

\iftoggle{detailsnote}{
\subsection*{Note on the arXiv version} For the interested reader, the tex file of the \href{https://arxiv.org/abs/1610.01862}{arXiv version} of this paper includes hidden details of some straightforward computations and arguments that are omitted in the pdf file.  These details can be displayed by switching the \texttt{details} toggle to true in the tex file and recompiling.
}{}

\subsection*{Acknowledgements}
The authors would like to thank Sabin Cautis, Josh Sussan, Aaron Lauda, and Peter Samuelson for a number of helpful conversations regarding traces in Heisenberg categorification.  The third author would also like to thank Shun-Jen Cheng for helpful conversations about associative superalgebras.

%
\section{Lattice Heisenberg algebras} \label{sec:lattice-Heis}
%

\subsection{Definitions} \label{subsec:lattice-Heis-def}

Throughout this paper, $\F$ will denote an algebraically closed field of characteristic zero.  Consider the rings
\begin{equation}
  \Z_{q,\pi} = \Z[q,q^{-1},\pi]/(\pi^2-1)
  \quad \text{and} \quad
  \F_{q,\pi} = \F[q,q^{-1},\pi]/(\pi^2-1).
\end{equation}

Let $L$ be a free $\Z_{q,\pi}$-module on the set $\{v_i\}_{i \in I}$, equipped with a nondegenerate symmetric sesquilinear form
\[
  \langle -, - \rangle_L \colon L \times L \to \Z_{q,\pi}.
\]
Here we say a form is \emph{sesquilinear} if it is $\Z$-bilinear, and
\[
  \langle v, q^s \pi^\epsilon w \rangle_L
  = q^s \pi^\epsilon \langle v, w \rangle_L
  = \langle q^{-s} \pi^\epsilon v, w \rangle_L,\quad
  \text{for all } v,w \in L,\ s \in \Z,\ \epsilon \in \Z_2.
\]
For $i,j \in I$, we will write $\langle i,j \rangle$ for $\langle v_i, v_j \rangle$.

We let $\Sym$ denote the Hopf algebra of symmetric functions with coefficients in $\F_{q,\pi}$.  If $p_n$ denotes the $n$-th power sum, then $\Sym$ has a basis given by
\[
  p_\lambda = p_{\lambda_1} p_{\lambda_2},\dotsc, p_{\lambda_\ell},\quad \lambda = (\lambda_1,\dotsc,\lambda_\ell) \in \partition,
\]
where $\partition$ denotes the set of partitions.  We let $|\lambda| = \sum_{1=1}^\ell \lambda_\ell$ denote the size of a partition $\lambda = (\lambda_1,\dotsc,\lambda_\ell) \in \partition$.  We will assume some familiarity with basic properties of symmetric functions.  We refer the reader to \cite{Mac95} for an exposition of this topic.

Let
\[ \ts
  \fh_L^+ = \fh_L^- = \Sym \otimes_{\Z_{q,\pi}} L \cong \bigoplus_{i \in I} \Sym.
\]
We will add a superscript $+$ or $-$ to various symmetric functions (or elements of $\Sym \otimes_{\Z_{q,\pi}} L$) and generating functions to indicate that they are to be considered as elements of $\fh_L^+$ or $\fh_L^-$, respectively.  In addition, for $f \in \Sym$, we write $f_i$, $i \in I$, for $f \otimes v_i \in \Sym \otimes_{\Z_{q,\pi}} L$.  For example $p_{n,i}^+$, $n \in \N$, $i \in I$, is $p_n \otimes v_i \in \fh_L^+$.

For $n \in \Z$, consider the $\F$-algebra homomorphism
\[
  \theta_n \colon \F_{q,\pi} \to \F_{q,\pi},\quad \theta_n(q) = q^n,\quad \theta_n(\pi) = -(-\pi)^n.
\]
There is a unique sesquilinear Hopf pairing determined by
\begin{equation} \label{eq:pn-pairing-determined-by-L}
  \langle -, - \rangle \colon \fh_L^- \times \fh_L^+ \to \F_{q,\pi},\quad
  \langle p_{n,i}^-, p_{m,j}^+ \rangle = \delta_{n,m} n \theta_n \big( \langle i, j \rangle \big),\quad
  n,m \in \N_+,\ i,j \in I.
\end{equation}

\begin{rem} \label{rem:rank-one-pairing}
  When $I = \{i\}$, with $\langle i, i \rangle = d \in \F_{q,\pi}$, the pairing \eqref{eq:pn-pairing-determined-by-L} is given on the basis $p_\lambda$, $\lambda \in \partition$, (here we drop the index $i$) by
  \begin{equation} \label{eq:full-pairing-determined-by-V}
    \langle p_\lambda^-, p_\mu^+ \rangle
    = \delta_{\lambda, \mu} z_\lambda \prod_{k=1}^{\ell(\lambda)} \theta_{\lambda_k} (d),\quad
    z_\lambda = \prod_{k \ge 1} k^{m_k(\lambda)} m_k(\lambda)!,
  \end{equation}
  where $m_k(\lambda)$ is the number of parts of $\lambda$ equal to $k$ and $\ell(\lambda)$ is the number of nonzero parts.  (The general formula follows from the values of the pairing on the power sums as in \cite[Example~(b), p.~306]{Mac95}.)  One can write such an explicit formula in the more general setting of arbitrary $I$, but the expression is more complicated (see, for example, \cite[Prop.~3.1]{Ber15} and \cite[(8.1)]{RS15b}).
\end{rem}

Let $\fh_L = \fh_L^+ \# \fh_L^-$ be the Heisenberg double associated to the Hopf pairing \eqref{eq:pn-pairing-determined-by-L}.  Note that, since the pairing is sesquilinear (as opposed to $\F_{q,\pi}$-bilinear, as in \cite{SY15,RS15c,RS15b,RS15}), we have $\F_{q,\pi}$-vector space isomorphisms
\[
  \fh_L \cong \fh_L^+ \otimes_{\F_{q,\pi}} \fh_L^- \cong \fh_L^- \otimes_{\F_{q,\pi}} \fh_L^+,
\]
where the action of $\F_{q,\pi}$ on $\fh_L^-$ is twisted via the $\F$-linear involution of $\F_{q,\pi}$ determined by $q \mapsto q^{-1}$, $\pi \mapsto \pi$.

The Heisenberg double $\fh_L$ is generated as an $\F_{q,\pi}$-algebra by $p_{n,i}^\pm$, $n \in \N_+$, $i \in I$, with relations
\begin{equation}
  [p_{n,i}^+, p_{m,j}^+] = 0,\quad [p_{n,i}^-,p_{m,j}^-] = 0,\quad [p_{n,i}^+, p_{m,j}^-] = \delta_{n,m} n \theta_n(\langle i,j \rangle),\quad n,m \in \N_+.
\end{equation}
Note that $\fh_L$ is naturally a graded algebra, where for a degree $n$ element $f \in \Sym$ and $i \in I$, we declare $f_i^\pm$ to be of degree $\pm n$.  In particular, $p_{n,i}^\pm$ is of degree $\pm n$.

\subsection{Heisenberg algebras associated to a Frobenius algebra} \label{subsec:hB}

We now relate the lattice Heisenberg algebras described above to the Heisenberg algebras associated to a graded Frobenius superalgebra in \cite[\S5]{RS15}.

Let $B = \bigoplus_{n \in \N} B_n$ be an $\N$-graded Frobenius superalgebra over $\F$ with trace map $\tr_B \colon B \to \F$ of $\Z$-degree $-\delta$, $\delta \in \N$.  (In other words, $\delta$ is the top degree of $B$.)  For simplicity, we assume that the trace map of $B$ is supersymmetric and even.  In particular, $\tr_B(b_1 b_2) = (-1)^{\bar b_1 \bar b_2} \tr_B(b_2 b_1)$ for all homogeneous $b_1, b_2 \in B$.  Here $\bar b$ denotes the parity of a homogeneous element $b \in B$.  In addition, we let $|b|$ denote the $\Z$-degree of a homogeneous element $b$.  Whenever we write an expression involving parities and/or degree, we shall implicitly assume that the corresponding element is homogeneous.  In the language of \cite{RS15}, our assumptions mean that $\psi = \id_B$ and $\delta=0$.  (The $\delta$ of \cite{RS15} is not the same as the $\delta$ of the current paper.)  We shall also assume that
\begin{center}
  \emph{all simple $B$-modules are of type $M$ (i.e.\ not isomorphic to their parity shifts).}
\end{center}
Therefore, in the language of \cite{RS15}, we have $\kk = \Z_{q,\pi}$.

We can naturally associate a lattice to $B$ as follows.  We let $L = L_B =K_0(B)$ be the split Grothendieck group of the category $B\pmd$ of finitely-generated projective graded $B$-modules, with coefficients in $\F$.  More precisely, $K_0(B)$ is the quotient of the free $\F$-module on isomorphism classes of finitely-generated projective graded $B$-modules, by the $\F$-submodule generated by $[M_1]_{\cong} - [M_2]_{\cong} + [M_3]_{\cong}$ for all $M_1,M_2,M_3 \in B\pmd$ such that $M_2 \cong M_1 \oplus M_3$.  Here $[M]_{\cong}$ denotes the isomorphism class of $M \in B\pmd$.

We define the structure of an $\F_{q,\pi}$-module on $K_0(B)$ in the usual way, defining
\[
  q^s \pi^\epsilon [M]_{\cong} = [\{s,\epsilon\}M],
\]
where $\{s,\epsilon\}M$ denotes the shift of $M$ determined by
\[
  (\{s,\epsilon\}M)_{s',\epsilon'} = M_{s'-s,\epsilon'+\epsilon}.
\]
Then $K_0(B)$ has a basis over $\F_{q,\pi}$ given by the classes of the indecomposable projective modules $P_i$, $i \in I$, of $B$.  We adopt the convention that the $P_i$ are generated in degree $(0,0)$.  We then define a sesquilinear pairing $\langle -,- \rangle \colon L \times L \to \F_{q,\pi}$ determined by
\begin{equation} \label{eq:K0-HOM-pairing}
  \langle [M], [N] \rangle = \grdim \HOM_B (M,N).
\end{equation}

Let $\fh_B$ be the Heisenberg algebra associated to $B$ in \cite[Def.~5.1]{RS15}, but where we extend scalars to $\F_{q,\pi}$ (i.e.\ $\fh_B \otimes_{\Z_{q,\pi}} \F_{q,\pi}$ in the language of \cite{RS15}).

\begin{prop} \label{prop:hB=hV}
  We have that $\fh_B$ is isomorphic, as a graded $\F_{q,\pi}$-algebra, to the algebra $\fh_L$ defined in Section~\ref{subsec:lattice-Heis-def} with $L=L_B$ the lattice associated to $B$ as above.
\end{prop}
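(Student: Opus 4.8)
The plan is to realise the isomorphism as the identity on generators and thereby reduce the statement to a comparison of defining relations. Recall from \cite[Def.~5.1]{RS15} that $\fh_B$ is presented (after extending scalars to $\F_{q,\pi}$) as an $\F_{q,\pi}$-algebra by generators in bijection with the $p_{n,i}^\pm$, $n \in \N_+$, $i \in I$, where the $+$-generators commute among themselves, the $-$-generators commute among themselves, and the mixed commutator $[p_{n,i}^+, p_{m,j}^-]$ is a scalar determined by the Frobenius data of $B$. The algebra $\fh_L$ of Section~\ref{subsec:lattice-Heis-def}, specialised to $L = L_B = K_0(B)$, has exactly the same generators and the same shape of relations, the mixed commutator being $\delta_{n,m}\, n\, \theta_n(\langle i,j\rangle)$ with $\langle i,j\rangle = \grdim \HOM_B(P_i,P_j)$ as in \eqref{eq:K0-HOM-pairing}. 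I would therefore define $\varphi \colon \fh_L \to \fh_B$ by sending each generator to the like-named generator; since the $++$ and $--$ relations are identically zero on both sides, $\varphi$ is a well-defined algebra homomorphism as soon as the mixed relations are seen to agree.

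The heart of the argument is thus the single scalar identity: that the mixed commutator prescribed by \cite[Def.~5.1]{RS15} coincides with $\delta_{n,m}\, n\, \theta_n\big(\grdim \HOM_B(P_i,P_j)\big)$. I would establish this in two steps. First, I would match the pairings on the lattice: the graded $\HOM$-pairing \eqref{eq:K0-HOM-pairing} between indecomposable projectives is, for a Frobenius superalgebra, computable from the trace form $\tr_B$ via the decomposition of the regular bimodule, and this recovers precisely the pairing entering the RS15 relation. Second, I would account for the twist $\theta_n$, namely the grading rescaling $q \mapsto q^n$ together with the parity rule $\pi \mapsto -(-\pi)^n$: this reflects the $\Z/n$-symmetry carried by the length-$n$ generators in the categorical model, and it is exactly the factor by which a decoration on an $n$-cycle contributes to the commutator. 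I would also reconcile the sesquilinearity convention adopted here with the $\F_{q,\pi}$-bilinear convention of \cite{RS15} (flagged just after the definition of $\fh_L$); this discrepancy is absorbed by the involution $q \mapsto q^{-1}$ and alters only the bookkeeping of the scalar, not the isomorphism type.

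Granting the relation match, $\varphi$ is a graded algebra homomorphism: $p_{n,i}^\pm$ has degree $\pm n$ in both $\fh_L$ and $\fh_B$, so $\varphi$ preserves the grading. It is surjective because its image contains all generators. For injectivity I would invoke the Heisenberg-double (PBW) structure common to both algebras: each is free over $\F_{q,\pi}$ on ordered monomials in the $p_{n,i}^-$ times ordered monomials in the $p_{n,i}^+$, and $\varphi$ carries one such basis bijectively onto the other. Hence $\varphi$ is an isomorphism of graded $\F_{q,\pi}$-algebras.

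The main obstacle I anticipate is precisely the scalar identity of the second paragraph, and within it the twist $\theta_n$. Matching the $q$-powers is a routine graded-dimension computation, but getting the parity sign $\pi \mapsto -(-\pi)^n$ correct --- and ensuring it is compatible with the superdimension $\dim B_\even - \dim B_\odd$ that ultimately governs the Jack parameter --- is the delicate part, especially when carried out simultaneously with the bilinear-to-sesquilinear change of convention so that no stray $q \mapsto q^{-1}$ remains.
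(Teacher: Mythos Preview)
Your strategy---match presentations generator-for-generator and check the scalar in the mixed relation---is the right shape, and it is essentially what the paper does. The gap is in your starting point. You write ``Recall from \cite[Def.~5.1]{RS15} that $\fh_B$ is presented \dots\ by generators in bijection with the $p_{n,i}^\pm$,'' but this is not what \cite{RS15} provides: as noted explicitly in the appendix (see the remark following Proposition~\ref{prop:he-presentation}), in that paper ``there are no $p_n$ and so the connection to the power sum presentation is not given.'' The presentation of $\fh_B$ available from \cite[Prop.~5.5]{RS15} is in terms of the complete and elementary symmetric functions $h_{n,i}^\pm$, $e_{n,i}^\pm$, with commutation relations governed by the graded dimensions of $S^\ell(V)$ and $\Lambda^\ell(V)$ for $V$ the relevant $\HOM$-space. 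So your ``recall'' step is not a recall but an assertion requiring proof, and it is exactly the nontrivial content of the proposition.

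The paper therefore proceeds the other way around: it derives, in Propositions~\ref{prop:hh-presentation} and~\ref{prop:he-presentation}, $h/e$-type presentations of the lattice algebra $\fh_L$ starting from its defining power-sum relations (this is where the $\theta_n$-twist and the $S^\ell/\Lambda^\ell$ graded-dimension identities are actually established), and then compares those directly with the $h/e$ presentation of $\fh_B$ from \cite[Prop.~5.5]{RS15}. Your delicate step---matching the $\theta_n$ parity rule and the sesquilinear-versus-bilinear convention---is precisely the computation carried out in those appendix propositions, but packaged on the $\fh_L$ side rather than on the $\fh_B$ side. If you want to salvage your route, you would need to first extract a power-sum presentation of $\fh_B$ from the $h/e$ one in \cite{RS15}, which amounts to proving Propositions~\ref{prop:hh-presentation}--\ref{prop:he-presentation} in reverse; at that point the two approaches coincide.
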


\begin{proof}
  The proposition follows immediately from a comparison of the presentations of $\fh_B$ given in \cite[Prop.~5.5]{RS15} to the presentations of $\fh_L$ given in Propositions~\ref{prop:hh-presentation} and~\ref{prop:he-presentation}.
\end{proof}

In the sequel, we will identify $\fh_B$ and $\fh_L$ via the isomorphism of Proposition~\ref{prop:hB=hV}.

\subsection{Recovering the pairing}

The algebra $\fh_L$ acts naturally on $\fh_L^+$ via the Fock space representation (see, for example, \cite[\S2]{RS15c} or \cite[Def.~2.10]{SY15}).  Define the $\F_{q,\pi}$-linear map
\begin{equation} \label{eq:pi0-def}
  \kappa_0 = \langle 1_{\fh_L^-}, - \rangle \colon \fh_L^+ \to \F_{q,\pi}.
\end{equation}
In other words, $\kappa_0$ is projection onto the degree zero piece of $\fh_L^+$, where the grading on $\fh_L^+$ is induced by the natural grading on $\Sym$ (see Section~\ref{subsec:lattice-Heis-def}).  Equivalently, $\kappa_0$ is the counit of the Hopf algebra $\fh_L^+$.  The following lemma illustrates how one can recover the pairing \eqref{eq:pn-pairing-determined-by-L} from the algebra $\fh_L$.

\begin{lem} \label{lem:recover-pairing}
  We have
  \[
    \kappa_0 \left( (fg) \cdot 1_{H^+} \right) = \langle f,g \rangle,\quad \text{for all } f \in \fh_L^-,\ g \in \fh_L^+.
  \]
\end{lem}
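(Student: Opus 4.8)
The plan is to deduce the identity from the explicit form of the Fock space representation of $\fh_L$ on $\fh_L^+$, combined with the identification of $\kappa_0$ with the counit of $\fh_L^+$. Recall that in this representation an element $g \in \fh_L^+$ acts by left multiplication on $\fh_L^+ \cong \Sym \otimes_{\Z_{q,\pi}} L$, with vacuum vector $1_{\fh_L^+} \in \Sym$, while an element $f \in \fh_L^-$ acts by the ``hit'' operator dual to the coproduct via the pairing \eqref{eq:pn-pairing-determined-by-L}. Writing the coproduct of $\fh_L^+$ in Sweedler notation $\Delta(g) = \sum g_{(1)} \otimes g_{(2)}$, this action is
\[
  f \cdot g = \sum \langle f, g_{(2)} \rangle\, g_{(1)}, \qquad g \in \fh_L^+ .
\]
This is precisely the defining property of the Heisenberg double action (see \cite[\S2]{RS15c} and \cite[Def.~2.10]{SY15}); one verifies on the primitive generators $p_{n,i}^+$ that it reproduces the commutation relations of $\fh_L$.

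First I would rewrite $(fg)\cdot 1_{\fh_L^+}$ using associativity of the module action, so that no straightening of the product $fg$ inside the Heisenberg double is needed. Since $g$ acts by multiplication and $1_{\fh_L^+}$ is the unit of $\Sym$, we have $g \cdot 1_{\fh_L^+} = g$, and therefore
\[
  (fg) \cdot 1_{\fh_L^+} = f \cdot \big( g \cdot 1_{\fh_L^+} \big) = f \cdot g = \sum \langle f, g_{(2)} \rangle\, g_{(1)} .
\]

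Next I would apply $\kappa_0$. Using that $\kappa_0$ is the counit of $\fh_L^+$, and that the pairing is $\F_{q,\pi}$-linear in its second argument (so the scalars $\langle f, g_{(2)}\rangle \in \F_{q,\pi}$ can be moved inside), I obtain
\[
  \kappa_0\big( (fg)\cdot 1_{\fh_L^+} \big)
  = \sum \langle f, g_{(2)} \rangle\, \kappa_0(g_{(1)})
  = \Big\langle f,\ \textstyle\sum \kappa_0(g_{(1)})\, g_{(2)} \Big\rangle
  = \langle f, g \rangle,
\]
where the last equality is the counit axiom $\sum \kappa_0(g_{(1)})\, g_{(2)} = g$. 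This establishes the lemma for all $f \in \fh_L^-$ and $g \in \fh_L^+$ at once, with no reduction to a basis.

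The only genuine input is the precise form of the $\fh_L^-$-action in the Fock representation, namely its defining property as the Heisenberg double action; everything else is the counit axiom together with sesquilinearity. The one point to watch is the bookkeeping of conventions (left versus right hit action, and which Sweedler factor is paired against $f$), but the final answer is insensitive to this choice, since the counit axiom collapses the sum back to $g$ either way. A more computational alternative would verify the identity on the power-sum basis $p_\lambda$ directly from the commutation relations of $\fh_L$ and extend by $\F_{q,\pi}$-linearity; I expect this route to be the more delicate one, as it requires careful tracking of the signs and of the scalar twist, whereas the Hopf-theoretic argument above sidesteps both.
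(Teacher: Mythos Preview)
Your proof is correct and follows essentially the same route as the paper's. Both arguments first reduce $(fg)\cdot 1_{\fh_L^+}$ to $f\cdot g$, and then invoke the defining property of the Fock action of $\fh_L^-$: the paper phrases this as the adjointness $\langle 1_{\fh_L^-}, f\cdot g\rangle = \langle f, g\rangle$ (using the definition $\kappa_0 = \langle 1_{\fh_L^-}, -\rangle$), while you unpack that adjointness explicitly via the Sweedler formula $f\cdot g = \sum \langle f, g_{(2)}\rangle g_{(1)}$ and the counit axiom. These are two presentations of the same one-line computation.
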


\begin{proof}
  We have
  \[
    \kappa_0 \left( (fg) \cdot 1_{H^+} \right)
    = \kappa_0 ( f \cdot g )
    = \langle 1_{\fh_L^-}, f \cdot g, \rangle
    = \langle f, g \rangle,
  \]
  where, in the third equality, we have used the fact that the Fock space action of $\fh_L^-$ on $\fh_L^+$ is adjoint to multiplication in $\fh_L^-$.
\end{proof}

\subsection{The Jack pairing} \label{subsec:Jack-pairing}

Fix a $\Z$-graded super vector space $V$.  Then we can consider the setup of Section~\ref{subsec:lattice-Heis-def} in the special case that $L$ is a lattice of rank one, generated by an element $v$ satisfying $\langle v, v \rangle = \grdim V$.

Specializing even further, fix $k \in \N$ and define $V = \C[x]/(x^k)$.  If we declare $\deg x = (1,0) \in \Z \times \Z_2$, then
\[
  \grdim V = 1 + q + q^2 + \dotsb + q^{k-1} = \frac{1-q^k}{1-q},
\]
and the pairing~\eqref{eq:full-pairing-determined-by-V} becomes
\begin{equation} \label{eq:specialized-Macdonald-pairing}
  \langle p_\lambda^-, p_\mu^+ \rangle
  = \delta_{\lambda,\mu} z_\lambda \prod_{i=1}^{\ell(\lambda)} \frac{1-q^{k \lambda_i}}{1-q^{\lambda_i}},\quad \lambda,\mu \in \partition.
\end{equation}
This is a specialization of the pairing defining the Macdonald symmetric functions (see \eqref{eq:Macdonald-pairing}).  Note that the quotients appearing in \eqref{eq:specialized-Macdonald-pairing} are, in fact, polynomials in $q$.  Therefore, we can specialize further to $q=1$.  This yields the \emph{Jack pairing} at parameter $k$, given by
\begin{equation} \label{eq:Jack-pairing}
  \langle p_\lambda^-, p_\mu^+ \rangle_k = \delta_{\lambda,\mu} k z_\lambda, \quad \lambda, \mu \in \partition,
\end{equation}
and used to define the Jack symmetric functions (see \cite[\S VI.10]{Mac95}).

\begin{rem}
  The choice of $V = \F[x]/(x^k)$, and its relevance for the categorification of the ring of symmetric functions with a specialization of the Macdonald bilinear form, is discussed in \cite{CL16}.  See also Section~\ref{subsec:Jack-limit-differential} for an explanation of how we can view the above construction as arising from one related to the Macdonald inner product.
\end{rem}

%
\section{The graphical calculus of the Heisenberg category} \label{sec:Heis-cat}
%

\subsection{The Heisenberg category} \label{subsec:Heis-cat-def}

As in Section~\ref{subsec:hB}, we let $B = \bigoplus_{n \in \N} B_n$ be an $\N$-graded Frobenius superalgebra over $\F$ with even, supersymmetric trace map $\tr_B \colon B \to \F$ of $\Z$-degree $-\delta$, $\delta \in \N$.  In this section we recall the definition, given in \cite{RS15}, of the Heisenberg category $\cH_B$ associated to $B$.  (In fact, the construction in \cite{RS15} is more general, working for an arbitrary $\N$-graded Frobenius superalgebra, without the simplifying assumptions on $B$ we have made here.)

Given a basis $\cB$ of $B$, we let $\cB^\vee = \{b^\vee \mid b \in \cB\}$ denote the right dual basis defined by the property that
\[
  \tr_B(b_1 b_2^\vee) = \delta_{b_1,b_2} \quad \text{for all } b_1,b_2 \in \cB.
\]

We define $\cH_B'$ to be an $\F$-linear strict monoidal category whose objects are generated by symbols $\{n,\epsilon\} \sQ_+$ and $\{n,\epsilon\} \sQ_-$, for $n\in \Z$, $\epsilon \in \Z_2$.  We think of $\{n,\epsilon\}\sQ_+$ as being a shifted version of $\sQ_+$ and we declare the monoidal structure to be compatible with shifts $\{ \cdot, \cdot \}$, so that, for example, $\{s,\epsilon\} \sQ_-\otimes \{s', \epsilon'\} \sQ_- = \{s+s',\epsilon+\epsilon'\} (\sQ_-\otimes \sQ_-)$.  We will usually omit the $\otimes$ symbol, and write tensor products as words in $\sQ_+$ and $\sQ_-$.  Thus an arbitrary object of $\cH'_B$ is a finite direct sum of words in the letters $\sQ_+$ and $\sQ_-$ where each word has a shift.

The space of morphisms between two objects is the $\F$-algebra generated by suitable planar diagrams.  The diagrams consist of oriented compact one-manifolds immersed into the plane strip $\R \times [0,1]$ modulo certain local relations.  The grading on morphisms, which will be specified later in this section, determines the difference in shifts between the domain and codomain.

A single upward oriented strand denotes the identity morphism from $\sQ_+$ to $\sQ_+$ while a downward oriented strand denotes the identity morphism from $\sQ_-$ to $\sQ_-$.
\[
  \begin{tikzpicture}[anchorbase]
    \draw[->] (0,0) -- (0,1);
    \draw[<-] (5,0) -- (5,1);
  \end{tikzpicture}
\]

Strands are allowed to carry dots labeled by elements of $B$. For example, if $b, b', b'' \in B$, then the diagram
\[
  \begin{tikzpicture}[anchorbase]
    \draw [->](0,0) -- (0,1.6);
    \filldraw [blue](0,.4) circle (2pt);
    \draw (0,.4) node [anchor=west] [black] {$b''$};
    \filldraw [blue](0,.8) circle (2pt);
    \draw (0,.8) node [anchor=west] [black] {$b'$};
    \filldraw [blue](0,1.2) circle (2pt);
    \draw (0,1.2) node [anchor=west] [black] {$b$};
  \end{tikzpicture}
\]
is an element of $\Hom_{\cH'_B}(\sQ_+,\{-|b|-|b'|-|b''|, \bar b + \bar b' + \bar b''\}\sQ_+)$.  (See below for an explanation of the degree shift.)  Diagrams are linear in the dots in the sense that
\[
  \begin{tikzpicture}[anchorbase]
    \draw [->](0,0) -- (0,1);
    \bluedot{(0,.5)} node [anchor=west,color=black] {$(z_1b_1 + z_2 b_2)$};
  \end{tikzpicture}
  \ =\ z_1 \left(\
  \begin{tikzpicture}[anchorbase]
    \draw [->](0,0) -- (0,1);
    \bluedot{(0,.5)} node [anchor=west, color=black] {$b_1$};
  \end{tikzpicture} \right)
  \ +\ z_2 \left(\
  \begin{tikzpicture}[anchorbase]
    \draw [->](0,0) -- (0,1);
    \bluedot{(0,.5)} node [anchor=west, color=black] {$b_2$};
  \end{tikzpicture} \right)
  \quad \text{for } z_1,z_2 \in \F,\ b_1,b_2 \in B.
\]

Collision of dots is controlled by multiplication in the algebra $B$:

\noindent\begin{minipage}{0.5\linewidth}
  \begin{equation} \label{eq:collision-up}
    \begin{tikzpicture}[anchorbase]
      \draw[->] (0,0) -- (0,1.5);
      \bluedot{(0,.5)};
      \draw (0,.5) node [anchor=west] [black] {$b'$};
      \bluedot{(0,1)};
      \draw (0,1) node [anchor=west] [black] {$b$};
    \end{tikzpicture}
    = (-1)^{\bar b \bar b'}\
    \begin{tikzpicture}[anchorbase]
      \draw[->] (0,0) -- (0,1.5);
      \bluedot{(0,.75)};
      \draw (0,.75) node [anchor=west,color=black] {$b'b$};
    \end{tikzpicture}
  \end{equation}
\end{minipage}%
\begin{minipage}{0.5\linewidth}
  \begin{equation} \label{eq:collision-down}
    \begin{tikzpicture}[anchorbase]
      \draw[->] (0,1.5) -- (0,0);
      \bluedot{(0,.5)};
      \draw (0,.5) node [anchor=west] [black] {$b'$};
      \bluedot{(0,1)};
      \draw (0,1) node [anchor=west] [black] {$b$};
    \end{tikzpicture}
    \ =\
    \begin{tikzpicture}[anchorbase]
      \draw[->] (0,1.5) -- (0,0);
      \bluedot{(0,.75)};
      \draw (0,.75) node [anchor=west,color=black] {$bb'$};
    \end{tikzpicture}
  \end{equation}
\end{minipage}\par\vspace{\belowdisplayskip}
\noindent The sign in the first relation comes from the fact that composition of dots on upward strands actually corresponds to multiplication in the opposite algebra $B^\op$.  We allow dots to slide freely along strands.

We also allow strands to cross. For example
\[
  \begin{tikzpicture}[>=stealth]
    \draw[->] (0,0) -- (1,1);
    \draw[<-] (1,0) -- (0,1);
    \filldraw [blue](.25,.25) circle (2pt);
    \draw (.25,.25) node [anchor=south east] [black] {$b$};
  \end{tikzpicture}
\]
is an element of $\Hom_{\cH'_B}(\sQ_+\sQ_-, \{-|b|, \bar b\} \sQ_-\sQ_+)$.  (See below for an explanation of the degree shift.)  Notice that the domain of a morphism is specified at the bottom of the diagram and the codomain is specified at the top, so we compose diagrams by stacking them and reading upwards.

We assign a $\Z\times \Z_2$-grading to the space of planar diagrams by defining

\noindent\begin{minipage}{0.5\linewidth}
  \begin{equation}
    \deg\
    \begin{tikzpicture}[anchorbase]
      \draw [->](0,0) -- (1,1);
      \draw [->](1,0) -- (0,1);
    \end{tikzpicture}
    \ = (0, 0),
  \end{equation}
\end{minipage}%
\begin{minipage}{0.5\linewidth}
  \begin{equation}
    \deg\
    \begin{tikzpicture}[anchorbase]
      \draw [->](0,0) -- (0,1);
      \bluedot{(0,0.5)} node[anchor=west, color=black] {$b$};
    \end{tikzpicture}
    \ = \deg b,
  \end{equation}
\end{minipage}\par\vspace{\belowdisplayskip}

\noindent\begin{minipage}{0.5\linewidth}
  \begin{equation} \label{eq:rightcup-deg}
    \deg\
    \begin{tikzpicture}[anchorbase]
      \draw[->] (0,0) arc (180:360:.5);
    \end{tikzpicture}
    \ = (0,0),
  \end{equation}
\end{minipage}%
\begin{minipage}{0.5\linewidth}
  \begin{equation} \label{eq:rightcap-deg}
    \deg\
    \begin{tikzpicture}[anchorbase]
      \draw[<-] (6.5,-.5) arc (0:180:.5);
    \end{tikzpicture}
    \ = (0,0),
  \end{equation}
\end{minipage}\par\vspace{\belowdisplayskip}

\noindent\begin{minipage}{0.5\linewidth}
  \begin{equation} \label{eq:leftcup-deg}
    \deg\
    \begin{tikzpicture}[anchorbase]
      \draw[<-] (0,0) arc (180:360:.5);
    \end{tikzpicture}
    \ =(\delta,0),
  \end{equation}
\end{minipage}%
\begin{minipage}{0.5\linewidth}
  \begin{equation} \label{eq:leftcap-deg}
    \deg\
    \begin{tikzpicture}[anchorbase]
      \draw[->] (6.5,-.5) arc (0:180:.5);
    \end{tikzpicture}
    \ = (-\delta,0).
  \end{equation}
\end{minipage}\par\vspace{\belowdisplayskip}

We consider diagrams up to super isotopy fixing the vertical coordinates of the endpoints of strands.  By \emph{super isotopy}, we mean that dots on distinct strands supercommute when they move past each other vertically:
\begin{equation} \label{eq:supercomm}
  \begin{tikzpicture}[anchorbase]
    \draw[->] (0,0) -- (0,1.5);
    \bluedot{(0,.5)} node [anchor=east,color=black] {$b$};
    \draw (.5,.75) node {$\ldots$};
    \draw[->] (1,0)--(1,1.5);
    \bluedot{(1,1)} node [anchor=west,color=black] {$b'$};
  \end{tikzpicture}
  = \ (-1)^{\bar{b}\bar{b'}}\
  \begin{tikzpicture}[anchorbase]
    \draw[->] (6,0) --(6,1.5);
    \bluedot{(6,1)} node [anchor=east, color=black] {$b$};
    \draw (6.5,.75) node {$\ldots$};
    \bluedot{(7,.5)} node [anchor=west,color=black] {$b'$};
    \draw[->] (7,0) -- (7,1.5);
  \end{tikzpicture}.
\end{equation}
Because of the sign changes involved in super isotopy invariance, we do not allow diagrams in which odd dots appear at the same height (i.e.\ have the same vertical coordinate).

The local relations we impose are the following:

\noindent\begin{minipage}{0.5\linewidth}
  \begin{equation}\label{rel:dotslide-up-right}
    \begin{tikzpicture}[anchorbase]
      \draw[->] (0,0) -- (1,1);
      \draw[->] (1,0) -- (0,1);
      \bluedot{(.25,.25)} node [anchor=south east, color=black] {$b$};
    \end{tikzpicture}
    \ =\
    \begin{tikzpicture}[anchorbase]
      \draw[->](2,0) -- (3,1);
      \draw[->](3,0) -- (2,1);
      \bluedot{(2.75,.75)} node [anchor=north west, color=black] {$b$};
    \end{tikzpicture}
  \end{equation}
\end{minipage}%
\begin{minipage}{0.5\linewidth}
  \begin{equation}\label{rel:dotslide-up-left}
    \begin{tikzpicture}[anchorbase]
      \draw[->] (0,0) -- (1,1);
      \draw[->] (1,0) -- (0,1);
      \bluedot{(.75,.25)} node [anchor=south west, color=black] {$b$};
    \end{tikzpicture}
    \ =\
    \begin{tikzpicture}[anchorbase]
      \draw[->] (2,0) -- (3,1);
      \draw[->] (3,0) -- (2,1);
      \bluedot{(2.25,.75)} node [anchor=north east, color=black] {$b$};
    \end{tikzpicture}
  \end{equation}
\end{minipage}\par\vspace{\belowdisplayskip}

\noindent\begin{minipage}{0.5\linewidth}
  \begin{equation}\label{rel:up-braid}
    \begin{tikzpicture}[anchorbase]
      \draw (0,0) -- (2,2)[->];
      \draw (2,0) -- (0,2)[->];
      \draw[->] (1,0) .. controls (0,1) .. (1,2);
    \end{tikzpicture}
    \ =\
    \begin{tikzpicture}[anchorbase]
      \draw (3,0) -- (5,2)[->];
      \draw (5,0) -- (3,2)[->];
      \draw[->] (4,0) .. controls (5,1) .. (4,2);
    \end{tikzpicture}
  \end{equation}
\end{minipage}%
\begin{minipage}{0.5\linewidth}
  \begin{equation}\label{rel:up-up-double-cross}
    \begin{tikzpicture}[anchorbase]
      \draw[->] (0,0) .. controls (1,1) .. (0,2);
      \draw[->] (1,0) .. controls (0,1) .. (1,2);
    \end{tikzpicture}
    \ =\
    \begin{tikzpicture}[anchorbase]
      \draw[->] (2,0) --(2,2);
      \draw[->] (3,0) -- (3,2);
    \end{tikzpicture}
  \end{equation}
\end{minipage}\par\vspace{\belowdisplayskip}

\noindent\begin{minipage}{0.5\linewidth}
  \begin{equation}\label{rel:down-up-double-cross}
    \begin{tikzpicture}[anchorbase]
      \draw[<-] (0,0) .. controls (1,1) .. (0,2);
      \draw[->] (1,0) .. controls (0,1) .. (1,2);
    \end{tikzpicture}
    \ =\
    \begin{tikzpicture}[anchorbase]
      \draw[<-] (2,0) --(2,2);
      \draw[->] (3,0) -- (3,2);
    \end{tikzpicture}
    \ - \sum_{b \in \cB} \
    \begin{tikzpicture}[anchorbase]
      \draw (4,1.75) arc (180:360:.5);
      \draw (4,2) -- (4,1.75);
      \draw[<-] (5,2) -- (5,1.75);
      \draw (5,.25) arc (0:180:.5);
      \bluedot{(5,1.66)} node [anchor=west,color=black] {$b^\vee$};
      \bluedot{(4,.33)} node [anchor=west,color=black] {$b$};
      \draw (5,0) -- (5,.25);
      \draw[<-] (4,0) -- (4,.25);
    \end{tikzpicture}
  \end{equation}
\end{minipage}%
\begin{minipage}{0.5\linewidth}
  \begin{equation}\label{rel-up-down-double-cross}
    \begin{tikzpicture}[anchorbase]
      \draw[->] (0,0) .. controls (1,1) .. (0,2);
      \draw[<-] (1,0) .. controls (0,1) .. (1,2);
    \end{tikzpicture}
    \ =\
    \begin{tikzpicture}[anchorbase]
      \draw[->] (2,0) --(2,2);
      \draw[<-] (3,0) -- (3,2);
    \end{tikzpicture}
  \end{equation}
\end{minipage}\par\vspace{\belowdisplayskip}

\noindent\begin{minipage}{0.5\linewidth}
  \begin{equation}\label{rel:ccc}
    \begin{tikzpicture}[anchorbase]
      \draw [->](0,0) arc (180:360:0.5cm);
      \draw (1,0) arc (0:180:0.5cm);
      \bluedot{(0,0)} node [anchor=east,color=black] {$b$};
    \end{tikzpicture}
    \ = \tr_B(b)
  \end{equation}
\end{minipage}%
\begin{minipage}{0.5\linewidth}
  \begin{equation}\label{rel:left-curl}
    \begin{tikzpicture}[anchorbase]
      \draw (0,0) .. controls (0,.5) and (.7,.5) .. (.9,0);
      \draw (0,0) .. controls (0,-.5) and (.7,-.5) .. (.9,0);
      \draw (1,-1) .. controls (1,-.5) .. (.9,0);
      \draw[->] (.9,0) .. controls (1,.5) .. (1,1);
      \draw (1.5,0) node {$=$};
      \draw (2,0) node {$0$};
    \end{tikzpicture}
  \end{equation}
\end{minipage}\par\vspace{\belowdisplayskip}
\noindent Relation \eqref{rel:down-up-double-cross} is independent of the choice of basis ${\cB}$ by \cite[Lem~3.1]{RS15}.  By \emph{local relation}, we mean that relations \eqref{rel:dotslide-up-right}--\eqref{rel:left-curl} can be applied to any subdiagram contained in a horizontal strip of the form $\R \times [a,b]$, with $0 < a < b < 1$, where the remainder of the strip (to the right and left of the subdiagram) consists only of strands without dots.  If we wish to perform a local relation on a subdiagram in a vertical strip containing dots, we should first super isotope the diagram so that the vertical strip containing the subdiagram no longer contains dots.

The monoidal structure on morphisms is given by horizontal juxtaposition of diagrams.  However, because of \eqref{eq:supercomm}, it is important that we specify that, when juxtaposing diagrams, dots in the left-hand diagram should be placed higher (vertically) than dots in the right-hand diagram.  We always super isotope the diagrams before juxtaposing them to ensure that this is the case.

Note that all of the graphical relations are homogeneous.   Thus, the morphism spaces of $\cH_B'$ are $\Z \times\Z_2$-graded vector spaces and composition of morphisms is compatible with the grading.  We denote by $\{ \cdot,\cdot\}$ the grading shift in $\cH'_B$.  It will be important in the sequel to emphasize that, in the category $\cH_B'$, \emph{we only allow morphisms of degree zero}.  In particular, a linear combination of diagrams of degree $(-s,\epsilon)$ whose tops and bottoms correspond to the sequences $x$ and $y$, respectively, of the generating objects $\sQ_+$ and $\sQ_-$ is to be viewed as a degree \emph{zero} morphism from $x$ to $\{s,\epsilon\}y$.

We define the category $\cH_B$ to be the idempotent completion (also known as the Karoubi envelope) of $\cH'_B$.  By definition, the objects of $\cH_B$ consist of pairs $(x,e)$, where $x$ is an object of $\cH'_B$ and $e \colon x \to x$ is an idempotent morphism.  The space of morphisms in $\cH_B$ from $(x,e)$ to $(x',e')$ is the subspace of morphisms in $\cH'_B$ consisting of $g \colon x \to x'$ such that $ge = g = e'g$.  The idempotent $e$ defines the identity morphism of $(x,e)$ in $\cH_B$.

Note that $\cH_B$ inherits the $\Z\times \Z_2$-grading from $\cH'_B$.  This means that the split Grothendieck group $K_0(\cH_B)$ of $\cH_B$ (with coefficients in $\F$) is an $\F_{q,\pi}$-algebra.  Precisely, we define
\[
  q^s \pi^\epsilon [x]_{\cong} := [\{s, \epsilon\} x]_{\cong},\quad
  x \in \Ob \cH_B,\ s \in \Z,\ \epsilon \in \Z_2,
\]
where $[x]_{\cong}$ denotes the isomorphism class of $x$ (and its image in the Grothendieck group).  The monoidal structure in $\cH_B$ descends to a multiplication in $K_0(\cH_B)$.

By relaxing the condition that morphisms must be of degree zero, we obtain the graded category $\cH_B^*$ with the same objects as $\cH_B$, but with morphism spaces between objects $x$ and $y$ given by
\begin{equation} \label{eq:HB-graded-def}
  \Hom_{\cH_B^*}(x,y) = \HOM_{\cH_B}(x,y) := \bigoplus_{(s,\epsilon) \in \Z \times \Z_2} \Hom_{\cH_B}(x,\{s,\epsilon\}y).
\end{equation}

\subsection{The algebras of annular and closed diagrams} \label{sec:diagrammatic-algebras}

Associated to the graded category $\cH_B^*$ are two natural algebras.  The first is the space of annular diagrams, with multiplication given by nesting diagrams; so for annular diagrams $A_1$ and $A_2$, we define $A_1 A_2$ to be the annular diagram obtained by placing $A_2$ in the center region of $A_1$.  As for the monoidal structure in $\cH_B^*$, we always first super isotope the diagrams so that all the dots in $A_1$ are higher (vertically) than those in $A_2$.

The second algebra associated to $\cH_B^*$ is the space of closed diagrams, with multiplication given by vertical stacking; so for closed diagrams $D_1$ and $D_2$, we define $D_1 D_2$ to be the closed diagram obtained by placing $D_1$ above $D_2$.

The algebra of annular diagrams acts naturally on the space of closed diagrams by placing the closed diagram in the center of the annulus and viewing the result as a closed diagram.  See Figures~\ref{fig:annular-and-closed} and~\ref{fig:annulus-action-on-center}.  Again, we always first super isotope so that the dots in the annular diagram are higher than those in the closed diagram.
\begin{figure}
  \[
    A =\
    \begin{tikzpicture}[anchorbase]
      \filldraw[thick,draw=green!60!black,fill=green!20!white] (-1,-0.7) -- (-1,0.7) .. controls (-1,2) and (3,2) .. (3,0.7) -- (3,-0.7) .. controls (3,-2) and (-1,-2) .. (-1,-0.7);
      \filldraw[thick,draw=green!60!black,fill=white] (0.6,-0.25) -- (0.6,0.25) .. controls (0.6,0.5) and (1.4,0.5) .. (1.4,0.25) -- (1.4,-0.25) .. controls (1.4,-0.5) and (0.6,-0.5) .. (0.6,-.25);
      \draw[->-=0.23,color=black] (-0.4,0.5) arc(360:180:0.1) .. controls (-0.6,1.5) and (2.6,1.5) .. (2.6,0.5) -- (2.6,-0.5) .. controls (2.6,-1.5) and (-0.6,-1.5) .. (-0.6,-0.5) .. controls (-0.6,0) and (0,0) .. (0,0.5);
      \draw[-<-=0.23] (-0.4,0.5) .. controls (-0.4,1.3) and (2.4,1.3) .. (2.4,0.5) -- (2.4,-0.5) .. controls (2.4,-1.3) and (-0.4,-1.3) .. (-0.4,-0.5) arc(180:0:0.1);
      \draw[->-=0.23] (-0.2,0.5) .. controls (-0.2,1.1) and (2.2,1.1) .. (2.2,0.5) -- (2.2,-0.5) .. controls (2.2,-1.1) and (-0.2,-1.1) .. (-0.2,-0.5);
      \draw[->-=0.2] (0,0.5) .. controls (0,0.9) and (2,0.9) .. (2,0.5) -- (2,-0.5) .. controls (2,-0.9) and (0,-0.9) .. (0,-0.5) .. controls (0,0.2) and (0.2,-0.3) .. (0.2,0.4);
      \draw[->-=0.2] (0.2,0.4) .. controls (0.2,0.7) and (1.8,0.7) .. (1.8,0.4) -- (1.8,-0.4) .. controls (1.8,-0.7) and (0.2,-0.7) .. (0.2,-0.4) .. controls (0.2,0.1) and (-0.2,0) .. (-0.2,0.5);
      \bluedot{(-0.56,0.65)};
      \bluedot{(-0.2,0.5)};
      \bluedot{(-0.4,-0.1)};
      \redcircle{(0.2,0.3)};
    \end{tikzpicture}
    \qquad \qquad D=\
    \begin{tikzpicture}[anchorbase]
      \draw[thick,dashed,draw=green!60!black] (-1,-0.7) -- (-1,0.7) .. controls (-1,2) and (3,2) .. (3,0.7) -- (3,-0.7) .. controls (3,-2) and (-1,-2) .. (-1,-0.7);
      \draw[-<-=0.1,color=black] (1,-0.5) -- (1,0.5) .. controls (1,1.3) and (-0.7,1.3) .. (-0.7,0.5) -- (-0.7,-0.5) .. controls (-0.7,-1.3) and (1,-1.3) .. (1,-0.5);
      \draw[<-] (0.1,0.3) arc(0:360:0.3);
      \draw[->] (0.7,-0.3) arc(0:360:0.3);
      \draw[->] (1.5,0.5) arc (180:0:.5) .. controls (2.5,0) and (1.5,0) .. (1.5,-0.5) arc (180:360:0.5) .. controls (2.5,0) and (1.5,0) .. (1.5,0.5);
      \bluedot{(-0.5,0.3)};
      \redcircle{(0.1,-0.3)};
      \bluedot{(1.6,0.8)};
      \redcircle{(2,-1)};
    \end{tikzpicture}
  \]
  \caption{An annular diagram $A$ and a closed diagram $D$.  The closed dots are labeled by elements of $B$, and the open dots represent right curls (see~\eqref{eq:right-curl}).\label{fig:annular-and-closed}}
\end{figure}
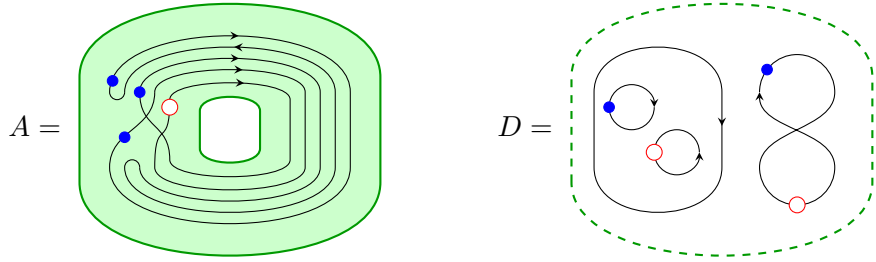

\begin{figure}
  \[
    A \cdot D =\
    \begin{tikzpicture}[anchorbase]
      \draw[thick,dashed,draw=green!60!black] (-1,-0.7) -- (-1,0.7) .. controls (-1,2) and (3,2) .. (3,0.7) -- (3,-0.7) .. controls (3,-2) and (-1,-2) .. (-1,-0.7);
      \draw[thick,dashed,draw=green!60!black] (0.6,-0.25) -- (0.6,0.25) .. controls (0.6,0.5) and (1.4,0.5) .. (1.4,0.25) -- (1.4,-0.25) .. controls (1.4,-0.5) and (0.6,-0.5) .. (0.6,-.25);
      \draw (1,0) node {$D$};
      \draw[->-=0.23,color=black] (-0.4,0.5) arc(360:180:0.1) .. controls (-0.6,1.5) and (2.6,1.5) .. (2.6,0.5) -- (2.6,-0.5) .. controls (2.6,-1.5) and (-0.6,-1.5) .. (-0.6,-0.5) .. controls (-0.6,0) and (0,0) .. (0,0.5);
      \draw[-<-=0.23] (-0.4,0.5) .. controls (-0.4,1.3) and (2.4,1.3) .. (2.4,0.5) -- (2.4,-0.5) .. controls (2.4,-1.3) and (-0.4,-1.3) .. (-0.4,-0.5) arc(180:0:0.1);
      \draw[->-=0.23] (-0.2,0.5) .. controls (-0.2,1.1) and (2.2,1.1) .. (2.2,0.5) -- (2.2,-0.5) .. controls (2.2,-1.1) and (-0.2,-1.1) .. (-0.2,-0.5);
      \draw[->-=0.2] (0,0.5) .. controls (0,0.9) and (2,0.9) .. (2,0.5) -- (2,-0.5) .. controls (2,-0.9) and (0,-0.9) .. (0,-0.5) .. controls (0,0.2) and (0.2,-0.3) .. (0.2,0.4);
      \draw[->-=0.2] (0.2,0.4) .. controls (0.2,0.7) and (1.8,0.7) .. (1.8,0.4) -- (1.8,-0.4) .. controls (1.8,-0.7) and (0.2,-0.7) .. (0.2,-0.4) .. controls (0.2,0.1) and (-0.2,0) .. (-0.2,0.5);
      \bluedot{(-0.56,0.65)};
      \bluedot{(-0.2,0.5)};
      \bluedot{(-0.4,-0.1)};
      \redcircle{(0.2,0.3)};
    \end{tikzpicture}
  \]
  \caption{The action of the annular diagram $A$ on the closed diagram $D$.\label{fig:annulus-action-on-center}}
\end{figure}
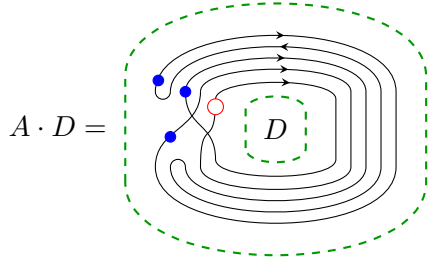

The goal of the current paper is to relate the two algebras mentioned above to the ring of symmetric functions.  In particular, we will identify certain annular diagrams with specific symmetric functions and use the action of annular diagrams on closed diagrams to obtain a diagrammatic realization of the Jack inner product.  Our computations will use the fact that the algebras of annular diagrams and closed diagrams correspond, respectively, to the trace and center of the category $\cH_B^*$.

%
\section{Trace decategorification} \label{sec:trace}
%

In this section we study the trace decategorification of the category $\cH_B$.  For a survey of trace decategorification that is well suited to the setup of the current paper, we refer the reader to \cite[\S3]{BGHL14}.

\subsection{Trace of the Heisenberg category} \label{subsec:trace-def}

By definition, the \emph{trace}, or \emph{zeroth Hochschild homology}, of $\cH_B'$ is given by
\begin{gather*}
  \Tr(\cH_B') = \left( \End \cH_B' \right) / \Span_\F \{fg-gf \mid f \in \Hom_{\cH_B'}(x,y),\ g \in \Hom_{\cH_B'}(y,x),\ x,y \in \Ob \cH_B'\}, \\
  \text{where} \quad \End \cH_B' = \bigoplus_{x \in \Ob \cH_B'} \Hom_{\cH_B'}(x,x).
\end{gather*}
We denote the image of a morphism $f \in \Hom_{\cH_B'}(x,x)$ in $\Tr(\cH_B')$ by $[f]$.  The trace $\Tr(\cH_B')$ is naturally a module over $\F_{q,\pi}$, where
\[
  q^s \pi^\epsilon [f] = [\{s,\epsilon\}f],\quad f \in \Hom_{\cH_B'}(x,x),\ s \in \Z,\ \epsilon \in \Z_2.
\]
(Note that $\{s,\epsilon\}f \in \Hom_{\cH_B'}(\{s,\epsilon\}x,\{s,\epsilon\}x)$.)  Since the trace remains unchanged under passage to the idempotent completion (see \cite[Prop.~3.2]{BGHL14}), we have $\Tr(\cH_B') = \Tr(\cH_B)$.

The monoidal structure on the category $\cH_B'$ induces a multiplication on the trace $\Tr(\cH_B')$.  Namely, given morphisms $f \colon x \to x'$ and $g \colon y \to y'$ in $\cH_B'$, then $fg \colon xy \to x'y'$, and we define $[f][g] = [fg]$.  With this multiplication, $\Tr(\cH_B')$ is an $\F_{q,\pi}$-algebra.

Recall that the objects of $\cH_B'$ are sums of (possibly empty) sequences $x$ of the generating objects $\sQ_+$ and $\sQ_-$.  We define the \emph{ascension} of such a sequence $x$, denoted $\asc x$, to be the total number of occurrences of $\sQ_+$ minus the total number of occurrences of $\sQ_-$.  Ascension then defines a natural $\Z$-grading on $\Tr(\cH_B')$ by declaring $\asc [f] = \asc x$ for $f \in \Hom_{\cH_B'}(x,x)$.

\begin{rem} \label{rem:morphism-grading}
  In the case that $B=\F$, the trace of $\cH_\F'$ was considered in \cite{CLLS15}.  However, in the case that the grading on $B$ is nontrivial, the trace of $\cH_B'$ behaves very differently.  This arises from the fact that the grading on $B$ induces a grading on the morphism spaces of $\cH_B'$ (see Section~\ref{subsec:Heis-cat-def}).  By definition, we require morphisms in $\cH_B'$ to have degree zero.  For example, the diagram
  \begin{equation} \label{eq:right-curl}
    \begin{tikzpicture}[anchorbase]
      \draw[->] (0,0) -- (0,1);
      \redcircle{(0,0.5)};
    \end{tikzpicture}
    \ :=\
    \begin{tikzpicture}[anchorbase,scale=0.5]
      \draw (2,1) .. controls (2,1.5) and (1.3,1.5) .. (1.1,1);
      \draw (2,1) .. controls (2,.5) and (1.3,.5) .. (1.1,1);
      \draw (1,0) .. controls (1,.5) .. (1.1,1);
      \draw[->] (1.1,1) .. controls (1,1.5) .. (1,2);
    \end{tikzpicture}
  \end{equation}
  has $\Z$-degree equal to the top degree $\delta$ of $B$.  In the category $\cH_B'$, we thus view this diagram as a degree zero morphism from $\sQ_+$ to $\{\delta,0\}\sQ_+$.  So, if $\delta \ne 0$, this is \emph{not} an endomorphism of any object in $\cH_B'$.  Therefore, we do not consider its class in the trace.  As we will see, this drastically simplifies the trace.  One should compare this to the fact that, for $\delta \ne 0$, one can show that the split Grothendieck group of $\cH_B'$ is isomorphic to the Heisenberg algebra $\fh_B$ (see \cite[Th.~10.5]{RS15} and \cite[Th.~1]{CL12}), whereas the analogous statement for $B = \F$ seems to be much harder to prove and is, in fact, still a conjecture (see \cite[Conj.~1]{Kho14}).
\end{rem}

The trace of the graded category $\cH_B^*$ is defined to be
\begin{gather*}
  \Tr(\cH_B^*) = \left( \END \cH_B \right) / \Span_\F \{fg-(-1)^{\bar f \bar g} gf \mid f \in \HOM_{\cH_B}(x,y),\ g \in \HOM_{\cH_B}(y,x),\ x,y \in \Ob \cH_B\}, \\
  \text{where} \quad \END \cH_B = \bigoplus_{x \in \Ob \cH_B} \HOM_{\cH_B}(x,x).
\end{gather*}
We again denote the image of a morphism $f \in \END \cH_B$ in $\Tr(\cH_B^*)$ by $[f]$, and we have $\Tr(\cH_B^*) = \Tr({\cH_B'}^*)$.  In $\Tr(\cH_B^*)$, we have
\begin{equation} \label{eq:q-pi-trace-action}
  q[f] = [f],\quad \pi [f] = -(-1)^{\bar f} [f],\quad f \in \END \cH_B.
\end{equation}
\details{
  Suppose $f \in \HOM_{\cH_B}(x,x)$ for some $x \in \Ob \cH_B$.  For $s \in \Z$ and $\epsilon \in \Z_2$, let $1_{s,\epsilon}$ denote the identity endomorphism of $x$, viewed as an element of $\Hom_{\cH_B}(x,\{s,\epsilon\}x)$.  Then we have
  \[
    q^s \pi^\epsilon [f]
    = [1_{s,\epsilon} f 1_{s,\epsilon}^{-1}]
    = (-1)^{\epsilon(\bar f + \epsilon)} [ 1_{s,\epsilon}^{-1} 1_{s,\epsilon} f]
    = (-1)^\epsilon (-1)^{\bar f \epsilon} [f].
  \]
}
Therefore, $\Tr(\cH_B^*)$ is naturally an $\F$-algebra.  Note that, in general, $\Tr(\cH_B^*)$ contains classes of many more endomorphisms compared to $\Tr(\cH_B)$.  For instance $\Tr(\cH_B^*)$ contains the class of the right curl \eqref{eq:right-curl}, since we now allow endomorphisms of nonzero degree.  See Remark~\ref{rem:W-alg-connection}.

By \cite[Lem.~10.1]{RS15}, every object of $\cH_B'$ is isomorphic to a direct sum of shifts of objects of the form $\sQ_+^n \sQ_-^m$, $n,m \in \N$.  Thus, by \cite[Lem~2.1]{BHLW14}, we have
\[
  \Tr(\cH_B) = \Tr(\cH_B') = \Tr(\cH_B''),
\]
where $\cH_B''$ is the full subcategory of $\cH_B'$ whose objects are shifts of $\sQ_+^n \sQ_-^m$, $n,m \in \N$.  Similarly,
\[
  \Tr(\cH_B^*) = \Tr({\cH_B'}^*) = \Tr({\cH_B''}^*),
\]
where ${\cH_B''}^*$ is the full subcategory of ${\cH_B'}^*$ whose objects are shifts of $\sQ_+^n \sQ_-^m$, $n,m \in \N$.

\subsection{An injection of traces}

Define
\[
  \END \cH_B'
  := \End {\cH_B'}^*
  = \bigoplus_{x \in \Ob \cH_B'} \Hom_{{\cH_B'}^*}(x,x)
  = \bigoplus_{x \in \Ob \cH_B'} \HOM_{\cH_B'}(x,x),
\]
and define $\END \cH_B''$ similarly.  Then we have a natural inclusion map
\begin{equation} \label{eq:End-into-END}
  \End \cH_B'
  \hookrightarrow \END \cH_B'.
\end{equation}
In the remainder of this paper, we will view $\F$ as an $\F_{q,\pi}$-module via $q \mapsto 1$, $\pi \mapsto -1$.

\begin{prop} \label{prop:trace-injection}
  Assume $B \ne B_0$ (i.e.\ $\delta > 0$).   The natural inclusion \eqref{eq:End-into-END} induces an injective $\F$-algebra homomorphism
  \begin{equation} \label{eq:trace-injection}
    \Tr(\cH_B) \otimes_{\F_{q,\pi}} \F \hookrightarrow \Tr(\cH_B^*).
  \end{equation}
\end{prop}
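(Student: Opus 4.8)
The plan is to realize \eqref{eq:trace-injection} as the inclusion of a single graded summand. Equip $\END\cH_B$ with the intrinsic $\Z\times\Z_2$-grading in which a diagram is assigned the total degree of its $B$-labels together with the cup/cap contributions \eqref{eq:leftcup-deg}--\eqref{eq:leftcap-deg}; this is exactly the $\Z\times\Z_2$-grading recorded by the shift functor $\{\cdot,\cdot\}$. All of the defining data are homogeneous for this grading: the local relations \eqref{rel:dotslide-up-right}--\eqref{rel:left-curl} are homogeneous (as noted in Section~\ref{subsec:Heis-cat-def}), and the supercommutators $fg-(-1)^{\bar f\bar g}gf$ quotiented out in $\Tr(\cH_B^*)$ are homogeneous whenever $f,g$ are. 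Hence the supercommutator subspace is graded, $\Tr(\cH_B^*)$ inherits a $\Z\times\Z_2$-grading, and under \eqref{eq:End-into-END} the endomorphisms coming from $\End\cH_B'$ are precisely the diagrams of intrinsic degree $(0,\bar0)$. So the image of \eqref{eq:trace-injection} lands in the degree-$(0,\bar0)$ part of $\Tr(\cH_B^*)$.

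First I would check well-definedness as an $\F$-algebra homomorphism. Multiplicativity is immediate, since the product on both traces is induced by horizontal juxtaposition (nesting of annular diagrams) and the inclusion respects it. Every endomorphism in $\End\cH_B'$ is of degree $(0,\bar0)$, hence even, so ordinary commutators and supercommutators agree on the image; thus commutators are carried into supercommutators. Finally, on degree-$(0,\bar0)$ classes the shift action $q^s\pi^\epsilon[f]=[\{s,\epsilon\}f]$ is compatible with \eqref{eq:q-pi-trace-action}: there $q[f]=[f]$, and for even $f$ one has $\pi[f]=-[f]$, which is exactly the $\F_{q,\pi}$-module structure $q\mapsto1$, $\pi\mapsto-1$ used to form $\Tr(\cH_B)\otimes_{\F_{q,\pi}}\F$. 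So the map factors through the base change.

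For injectivity, homogeneity makes the degree-$(0,\bar0)$ piece of $\Tr(\cH_B^*)$ a direct summand, so it suffices to identify this summand with $\Tr(\cH_B)\otimes_{\F_{q,\pi}}\F$. The underlying spaces agree, both being spanned by degree-$(0,\bar0)$ endomorphisms, and the content is to match the relations; this is the step I expect to be the main obstacle. The relations imposed in the target are the degree-$(0,\bar0)$ supercommutators $fg-(-1)^{\bar f\bar g}gf$ with $\deg f=-\deg g$. Those with $\deg f=(0,\bar0)$ are exactly the commutators defining $\Tr(\cH_B)$, but one must also dispose of the ``extra'' relations in which $f$ and $g$ have nonzero, complementary intrinsic degree. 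The observation that resolves this is that such an $f$, though of nonzero intrinsic degree, is a genuine \emph{degree-zero} morphism of $\cH_B$ once its degree is absorbed into a grading shift of its codomain (and likewise for $g$); equivalently, sliding the cut point of a degree-$(0,\bar0)$ annular diagram around the annulus is always effected by degree-zero morphisms of $\cH_B$. Hence $fg=(-1)^{\bar f\bar g}gf$ is already a degree-zero commutator relation in $\Tr(\cH_B)$ between shifted objects, the shift contributing a factor $q^s\pi^\epsilon$ that collapses under $q\mapsto1$, $\pi\mapsto-1$ to the sign $(-1)^{\bar f\bar g}$ demanded by the supercommutator. I would verify this matching carefully from the description of morphisms between shifted objects; granting it, the degree-$(0,\bar0)$ summand of $\Tr(\cH_B^*)$ is canonically $\Tr(\cH_B)\otimes_{\F_{q,\pi}}\F$, and \eqref{eq:trace-injection}, being the inclusion of a graded summand, is injective. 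The hypothesis $\delta>0$ enters to guarantee that the intrinsic grading is nontrivial, so that this summand is a proper piece and the argument is not vacuous.
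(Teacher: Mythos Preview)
Your approach is correct and genuinely different from the paper's. You argue abstractly that $\Tr(\cH_B^*)$ inherits a $\Z\times\Z_2$-grading and that the degree-$(0,\bar0)$ summand is exactly $\Tr(\cH_B)\otimes_{\F_{q,\pi}}\F$; the key step---showing that a supercommutator $fg-(-1)^{\bar f\bar g}gf$ with $\deg f=(s,\epsilon)=-\deg g$ already vanishes in $\Tr(\cH_B)\otimes\F$ by reinterpreting $f,g$ as degree-zero morphisms between shifted objects---is valid once the bookkeeping is done. Concretely: writing $f'\colon x\to\{s,\epsilon\}y$ for $f$ and $g'=\{s,\epsilon\}g\colon\{s,\epsilon\}y\to x$, the ordinary trace relation gives $[gf]=[g'f']=[f'g']=q^s\pi^\epsilon[fg]$ in $\Tr(\cH_B)$, so $fg-(-1)^\epsilon gf$ lies in $(1-(-1)^\epsilon q^{-s}\pi^\epsilon)\cdot\Tr(\cH_B)$, which dies under $q\mapsto1$, $\pi\mapsto-1$. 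This is precisely the verification you flagged but did not carry out; it goes through without obstruction.

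The paper instead restricts to unshifted objects $\sQ_+^n\sQ_-^m$ and invokes \cite[Lem.~10.2]{RS15} to show that $\HOM$-spaces between such objects are concentrated in nonnegative $\Z$-degree. Hence if $fg-(-1)^{\bar f\bar g}gf$ has degree $(0,\bar0)$, both $f$ and $g$ must have $\Z$-degree zero, forcing the supercommutator to arise from genuine degree-zero morphisms. This is a structural fact about $\cH_B$ that really does require $\delta>0$. Your argument, by contrast, is a general statement about traces of linear categories equipped with grading-shift autoequivalences, and uses nothing specific to $\cH_B$.

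This brings up the one real defect in your write-up: your explanation of where $\delta>0$ enters is wrong. You say it guarantees the grading is nontrivial so ``the argument is not vacuous,'' but your argument never uses $\delta>0$ at all---it proves the injection for arbitrary $\delta$. The hypothesis is an artifact of the paper's chosen method (the positivity lemma from \cite{RS15}), not of the statement itself. Rather than inventing a role for it, you should simply observe that your route does not need it.
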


\begin{proof}
  Let
  \begin{gather*}
    K = \Span_\F \{fg-gf \mid f \in \Hom_{\cH_B'}(x,y),\ g \in \Hom_{\cH_B'}(y,x),\ x,y \in \Ob \cH_B''\} \subseteq \End \cH_B'',\\
    K^* = \Span_\F \{fg-(-1)^{\bar f \bar g} gf \mid f \in \HOM_{\cH_B'}(x,y),\ g \in \HOM_{\cH_B'}(y,x),\ x,y \in \Ob \cH_B''\} \subseteq \End {\cH_B''}^*.
  \end{gather*}
  Thus,
  \[
    \Tr(\cH_B) = (\End \cH_B'')/K
    \quad \text{and} \quad
    \Tr(\cH_B^*) = (\END \cH_B'')/K^*.
  \]
  We clearly have
  \[
    K \subseteq K^* \cap \End \cH_B'',
  \]
  and, by \eqref{eq:q-pi-trace-action}, we have an induced map $\Tr(\cH_B) \otimes_{\F_{q,\pi}} \F \to \Tr(\cH_B^*)$.

  It remains to prove that
  \[
    K^* \cap \End \cH_B'' \subseteq K.
  \]
  Suppose $f \in \HOM_{\cH_B'}(x,y)$, $g \in \HOM_{\cH_B'}(y,x)$ for some $x,y \in \Ob \cH_B''$.  By the definition of $\cH_B''$, we have $x = \sQ_+^{n_x} \sQ_-^{m_x}$ and $y = \sQ_+^{n_y} \sQ_-^{m_y}$ for some $n_x,m_x,n_y,m_y \in \N$.  Thus, the argument used in the proof of \cite[Lem.~10.2]{RS15} shows that $f$ and $g$ have no nonzero negative degree components.  Thus, if $fg-gf$ lies in degree zero, we must have $fg - gf = f_0 g_0 - g_0 f_0$, where $f_0$ and $g_0$ are the degree zero pieces of $f_0$ and $g_0$, respectively.  The result follows.
\end{proof}

\subsection{Diagrammatic realization of the trace}

In the case that $B=\F$, it was explained in \cite[\S4]{CLLS15} that the trace of $\cH_\F$ can be viewed as diagrams on the annulus.  We can do the same for the categories $\cH_B$ and $\cH_B^*$.  However, as noted in Remark~\ref{rem:morphism-grading}, we must be careful to only consider morphisms of degree zero.  For example, consider the relation \eqref{rel:down-up-double-cross}.  All diagrams in this equation are of degree zero, and thus correspond to elements of $\End_{\cH_B'}(\sQ_- \sQ_+)$.  However, in the trace, we will want to slide the cup or cap around the annulus.  In this case, we are considering the rightmost term of \eqref{rel:down-up-double-cross} as a composition of morphisms
\[
  \sQ_- \sQ_+ \to \{\delta + |b|\} \varnothing \to \sQ_- \sQ_+,
\]
and we must keep track of the grading shift on the object $\varnothing$.

Given $f \in \End \cH_B'$, we depict the element $[f] \in \Tr (\cH_B)$ by drawing $f$ on the left side of an annulus and then closing up the diagram to the right:
\begin{equation} \label{eq:f-on-annulus}
  \begin{tikzpicture}[anchorbase]
    \draw (-0.5,-0.25) -- (-0.5,0.25) -- (0.5,0.25) -- (0.5,-0.25) -- (-0.5,-0.25);
    \draw (0,0) node {$f$};
    \draw (-0.4,0.25) -- (-0.4,0.7);
    \draw (-0.2,0.25) -- (-0.2,0.7);
    \draw (0,0.25) -- (0,0.7);
    \draw (0.2,0.25) -- (0.2,0.7);
    \draw (0.4,0.25) -- (0.4,0.7);
    \draw (-0.4,-0.25) -- (-0.4,-0.7);
    \draw (-0.2,-0.25) -- (-0.2,-0.7);
    \draw (0,-0.25) -- (0,-0.7);
    \draw (0.2,-0.25) -- (0.2,-0.7);
    \draw (0.4,-0.25) -- (0.4,-0.7);
  \end{tikzpicture}
  \quad \rightsquigarrow \quad
  \begin{tikzpicture}[anchorbase]
    \filldraw[thick,draw=green!60!black,fill=green!20!white] (-1,-0.7) -- (-1,0.7) .. controls (-1,2) and (3,2) .. (3,0.7) -- (3,-0.7) .. controls (3,-2) and (-1,-2) .. (-1,-0.7);
    \filldraw[thick,draw=green!60!black,fill=white] (0.6,-0.25) -- (0.6,0.25) .. controls (0.6,0.5) and (1.4,0.5) .. (1.4,0.25) -- (1.4,-0.25) .. controls (1.4,-0.5) and (0.6,-0.5) .. (0.6,-.25);
    \filldraw[draw=black,fill=white] (-0.7,-0.25) -- (-0.7,0.25) -- (0.3,0.25) -- (0.3,-0.25) -- (-0.7,-0.25);
    \draw (-0.2,0) node {$f$};
    \draw (-0.6,0.25) -- (-0.6,0.5) .. controls (-0.6,1.5) and (2.6,1.5) .. (2.6,0.5) -- (2.6,-0.5) .. controls (2.6,-1.5) and (-0.6,-1.5) .. (-0.6,-0.5) -- (-0.6,-0.25);
    \draw (-0.4,0.25) -- (-0.4,0.5) .. controls (-0.4,1.3) and (2.4,1.3) .. (2.4,0.5) -- (2.4,-0.5) .. controls (2.4,-1.3) and (-0.4,-1.3) .. (-0.4,-0.5) -- (-0.4,-0.25);
    \draw (-0.2,0.25) -- (-0.2,0.5) .. controls (-0.2,1.1) and (2.2,1.1) .. (2.2,0.5) -- (2.2,-0.5) .. controls (2.2,-1.1) and (-0.2,-1.1) .. (-0.2,-0.5) -- (-0.2,-0.25);
    \draw (0,0.25) -- (0,0.5) .. controls (0,0.9) and (2,0.9) .. (2,0.5) -- (2,-0.5) .. controls (2,-0.9) and (0,-0.9) .. (0,-0.5) -- (0,-0.25);
    \draw (0.2,0.25) -- (0.2,0.4) .. controls (0.2,0.7) and (1.8,0.7) .. (1.8,0.4) -- (1.8,-0.4) .. controls (1.8,-0.7) and (0.2,-0.7) .. (0.2,-0.4) -- (0.2,-0.25);
  \end{tikzpicture}
\end{equation}
Elements of $\Tr(\cH_B^*)$ can be viewed similarly.

\begin{rem}
  Above, we have taken the \emph{right} trace, which corresponds diagrammatically to closing off morphisms of $\cH_B'$ or ${\cH_B'}^*$ to the right.  One can also take the \emph{left} trace by closing off to the left.  The relation between the two constructions is as follows.  The adjunctions (i.e.\ the cups and caps) give involutions of the categories $\cH_B$ and $\cH_B^*$ that interchange $\sQ_+$ and $\sQ_-$ and reverse the order of tensor products.  Diagrammatically, this automorphism $f \mapsto f^\vee$ comes from rotating diagrams through an angle of $\pi$.  Then, closing off $f$ to the right yields the same annular diagram as closing off $f^\vee$ to the left.  The resulting anti-automorphism from the left trace to the right trace interchanges $\Tr(\cH_B)^\pm$ (see \eqref{eq:trace-pm}) and corresponds to the anti-automorphism from the Heisenberg double $\fh_B^+ \# \fh_B^-$ to the opposite Heisenberg double $\fh_B^- \# \fh_B^+$.  Whereas the action of the left trace on the center, to be described in Section~\ref{subsec:trace-action-on-center}, corresponds to the lowest weight Fock space representation (generated by a vector killed by $\Tr(\cH_B)^-$), the natural action of the right trace on the center corresponds to the highest weight Fock space representation.
\end{rem}

\subsection{Identification of the trace}

For $n \in \N_+$, let
\[
  B^{\rtimes n} := B^{\otimes n} \rtimes S_n
\]
be the wreath product algebra associated to $B$, with grading inherited from $B$ (i.e.\ we take $S_n$ to lie in degree zero).  By convention, we set $B^{\rtimes 0} = \F$.  The degree zero piece of $B^{\rtimes n}$ is $B^{\rtimes n}_0 = B_0 \rtimes S_n$.

Let $e_1,\dotsc,e_\ell$, be idempotents in $B$ such that $B e_1,\dotsc,B e_\ell$, is a complete list of (pairwise non-isomorphic) representatives of the isomorphism classes of indecomposable projective $B$-modules, up to grading and parity shift.  Note that $e_i \in B_0$ for all $i \in \{1,\dotsc,\ell\}$.  Similarly, for $\lambda \vdash n$, we let $e_\lambda$ be an idempotent of $\F S_n$ such that $(\F S_n) e_\lambda$ is the simple module corresponding to the partition $\lambda$.

For $n \in \N$, define
\[ \ts
  \partition^\ell(n) = \left\{ \blambda = (\lambda^1,\dotsc,\lambda^\ell) \in \partition^\ell \mid |\blambda| := \sum_{i=1}^\ell |\lambda^i| = n \right\}.
\]
For $\blambda = (\lambda^1,\dotsc,\lambda^\ell) \in \partition^\ell(n)$, let
\[
  e_\blambda := \left( e_1^{\otimes |\lambda^1|} \otimes \dotsb \otimes e_\ell^{\otimes |\lambda^\ell|} \right) \left(e_{\lambda^1} \otimes \dotsb \otimes e_{\lambda^\ell} \right)
\]
be the corresponding idempotent of $B^{\rtimes n}$, where
\[
  e_{\lambda^1}
  \otimes\dotsb\otimes e_{\lambda^\ell} \in \mathbb{F} S_{|\lambda_1|} \otimes \dotsb \otimes \mathbb{F} S_{|\lambda^\ell|} \subseteq \mathbb{F} S_n.
\]
Then $B^{\rtimes n} e_\blambda$, $\blambda \in \partition^\ell(n)$, is a complete set of indecomposable projective $B^{\rtimes n}$-modules, up to isomorphism and shift.  (See, for example, \cite[Prop.~4.4]{RS15}.)

For $\lambda \vdash n$ and $i \in I$, we set
\[
  e_{\lambda,i} = e_i^{\otimes n} e_\lambda,\quad i \in \{1,\dotsc,\ell\},\ \lambda \vdash n.
\]

We have natural algebra homomorphisms
\begin{equation} \label{eq:FSn-end-spaces}
  (B^{\rtimes n})^\op \to \END_{\cH'_B}(\sQ_+^n),\quad B^{\rtimes n} \to \END_{\cH'_B}(\sQ_-^n)
\end{equation}
(see \cite[(9.1),(9.2)]{RS15}).  For an element of $B^{\rtimes n}$, we will use superscripts $+$ and $-$ to denote the image of this element in $\END_{\cH_B'}(\sQ_+^n)$ and $\END_{\cH_B'}(\sQ_-^n)$, respectively, under the above maps.  For instance, for $i \in I$ and $\lambda \vdash n$, $e_{\lambda,i}^+$ is the idempotent $e_{\lambda,i} \in B^{\rtimes n}$, viewed as an endomorphism of $\sQ_+^n$.

Recall that the \emph{trace}, or \emph{cocenter}, of a superalgebra $A$ is the vector space
\[
  \Tr(A) := A/ \Span_\F \{ ab - (-1)^{\bar a \bar b} ba \mid a,b \in A \}.
\]

\begin{lem} \label{lem:cocenter-basis}
  Suppose $A$ is a finite-dimensional semisimple superalgebra whose simple modules are all of type $M$.  Let $b_1,\dotsc,b_s$ be a set of idempotents such that $Ab_1,\dotsc,Ab_s$ is a complete set of (pairwise non-isomorphic) representatives of simple $A$-modules.  Then the images of $b_1,\dotsc,b_s$ form a basis of $\Tr(A)$.
\end{lem}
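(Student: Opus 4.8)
The plan is to reduce the statement to a single matrix superalgebra via the super-analogue of the Artin--Wedderburn theorem, and then to identify the super-cocenter with the supertrace. Throughout I write $[x,y] := xy - (-1)^{\bar x \bar y} yx$ for the super-commutator, so that $\Tr(A) = A/[A,A]$, where $[A,A]$ denotes the $\F$-span of all super-commutators.

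First I would invoke the structure theory of finite-dimensional semisimple superalgebras over an algebraically closed field of characteristic zero: the hypothesis that every simple $A$-module is of type $M$ forces a decomposition $A \cong \prod_{i=1}^s A_i$ into simple superalgebras, each of the form $A_i \cong M_{m_i|n_i}(\F)$ (the type $Q$ factors, whose simple modules are isomorphic to their parity shifts, are excluded precisely by this hypothesis). These $s$ factors are in bijection with the isomorphism classes of simple modules $Ab_1,\dotsc,Ab_s$. Next I would note that the super-cocenter is additive over such products: if $a \in A_i$ and $b \in A_j$ with $i \ne j$, then $ab = 0 = ba$, so every super-commutator of $A$ is a sum of super-commutators lying in individual factors. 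Hence $[A,A] = \bigoplus_{i=1}^s [A_i,A_i]$ and $\Tr(A) \cong \bigoplus_{i=1}^s \Tr(A_i)$.

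It then remains to compute $\Tr(M_{m|n}(\F))$. The supertrace $\str$ satisfies $\str(ab) = (-1)^{\bar a \bar b}\str(ba)$, so it annihilates all super-commutators and descends to $\Tr(M_{m|n}(\F))$. To see that this quotient is one-dimensional, I would exhibit enough super-commutators among the matrix units $E_{pq}$: for $p \ne q$ one has $[E_{pp}, E_{pq}] = E_{pq}$, while $[E_{pq}, E_{qp}] = E_{pp} - (-1)^{\bar p + \bar q} E_{qq}$, which equals $E_{pp} - E_{qq}$ or $E_{pp} + E_{qq}$ according as $p,q$ lie in the same or in opposite parity blocks. Together these span exactly $\ker \str$, which has codimension one; hence $\Tr(M_{m|n}(\F)) \cong \F$, and the class of any element of nonzero supertrace is a basis.

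Finally I would analyse the idempotents $b_i$. Since $Ab_i$ is simple, $b_i$ is a primitive (and, as usual, even) idempotent lying in the single factor $A_i$ attached to that simple module; under $A_i \cong M_{m_i|n_i}(\F)$ it is conjugate to a diagonal matrix unit, so $\str(b_i) = \pm 1 \ne 0$ and $[b_i]$ is a basis of $\Tr(A_i)$. As the $b_i$ sit in distinct factors, $\{[b_1],\dotsc,[b_s]\}$ is then a basis of $\Tr(A) \cong \bigoplus_i \Tr(A_i)$. I expect the main obstacle to be the codimension-one computation for the single factor $M_{m|n}(\F)$ --- verifying that the super-commutators of matrix units already exhaust $\ker\str$ --- since everything else is formal bookkeeping once the super-Wedderburn decomposition is in hand.
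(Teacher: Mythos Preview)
Your proposal is correct and follows essentially the same route as the paper: reduce via the super Artin--Wedderburn decomposition to a single factor $M_{m|n}(\F)$, identify its super-cocenter with $\F$ via the supertrace, and observe that a primitive idempotent has nonzero supertrace. The paper is terser --- it simply asserts $\Tr(M_{m|n}) = M_{m|n}/\{X : \str X = 0\}$ and takes $b = E_{11}$ --- whereas you spell out the matrix-unit commutator computation and the additivity over factors, but the argument is the same.
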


\begin{proof}
  The assumptions on $A$ imply that it is isomorphic to a product of superalgebras of the form $M_{m|n}$ for $m,n \in \N$, not both zero.  Thus, it suffices to consider the case $A = M_{m|n}$.  Then we have
  \[
    \Tr(A) = A / \Span_\F \{XY - (-1)^{\bar X \bar Y} YX \} = M_{m|n}/\{X \mid \str X = 0\},
  \]
  where $\str X$ denotes the supertrace of $X \in M_{m|n}$.  Now, up to conjugation by a degree zero element, we can take the minimal idempotent $b$ of $A$ to be the matrix with a 1 in the $(1,1)$-entry and zeros elsewhere.  Then $\str b = 1$ and the result follows.
\end{proof}

The \emph{Chern character} for $\cH_B$ is the homomorphism of $\F_{q,\pi}$-algebras
\[
  \chern \colon K_0(\cH_B) \to \Tr(\cH_B) = \Tr(\cH_B'),\quad [x]_{\cong} \mapsto [1_x],\quad x \in \Ob \cH_B.
\]

\begin{theo} \label{theo:trace-identification}
  Assume $B \ne B_0$ (i.e.\ $\delta > 0$).  If $B_0$ is semisimple, then the map
  \[
    \chern \colon K_0(\cH_B) \otimes_\Z \F \to \Tr (\cH_B)
  \]
  is an isomorphism of $\F_{q,\pi}$-algebras.  Furthermore, we have an isomorphism of $\F_{q,\pi}$-algebras uniquely determined by
  \begin{equation} \label{eq:trace-hB-isom}
    \varphi_B \colon \Tr(\cH_B) \cong \fh_B,\quad
    \left[ e_{\lambda,i}^\pm \right] \mapsto s_{\lambda,i}^\pm,
  \end{equation}
  where $s_\lambda$ denotes the Schur function associated to $\lambda \in \partition$.
\end{theo}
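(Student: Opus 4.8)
The plan is to define $\varphi_B$ as the composite of $\chern^{-1}$ with the identification $K_0(\cH_B)\otimes_\Z\F\cong\fh_B\cong\fh_L$ supplied by \cite[Th.~10.5]{RS15} and Proposition~\ref{prop:hB=hV}, so that the whole statement reduces to proving that the Chern character, which is already given as an $\F_{q,\pi}$-algebra homomorphism, is bijective. Under the cited identification the class of the indecomposable projective object $(\sQ_\pm^n,e_{\lambda,i}^\pm)$ of $\cH_B$ goes to the element $s_{\lambda,i}^\pm$, while $\chern\big([(\sQ_\pm^n,e_{\lambda,i}^\pm)]\big)=[1_{(\sQ_\pm^n,e_{\lambda,i}^\pm)}]=[e_{\lambda,i}^\pm]$. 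Thus, once $\chern$ is known to be invertible, $\varphi_B:=(\text{iso})\circ\chern^{-1}$ automatically satisfies $\varphi_B([e_{\lambda,i}^\pm])=s_{\lambda,i}^\pm$, and it is the unique such algebra map because the $[e_{\lambda,i}^\pm]$ generate $\Tr(\cH_B)$.

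For surjectivity I would use $\Tr(\cH_B)=\Tr(\cH_B'')$ and reduce an arbitrary class $[f]$, with $f$ a degree-zero endomorphism of some $\sQ_+^n\sQ_-^m$, to normal-ordered form. Since $\delta>0$, a right curl~\eqref{eq:right-curl} and any dot of positive degree raise the $\Z$-degree, so a degree-zero diagram involves only crossings and $B_0$-labelled dots; left curls vanish by~\eqref{rel:left-curl}. Applying~\eqref{rel-up-down-double-cross} and~\eqref{rel:down-up-double-cross} to pull all $\sQ_+$-strands past all $\sQ_-$-strands rewrites $[f]$ as an $\F$-linear combination of classes of juxtapositions $\alpha^+\beta^-$, with $\alpha\in B_0^{\rtimes n'}$ and $\beta\in B_0^{\rtimes m'}$, together with terms having strictly fewer strands created by the cap--cup correction in~\eqref{rel:down-up-double-cross}. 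As $B_0$ is semisimple with simple modules of type $M$, so is each wreath product $B_0^{\rtimes n'}$, and Lemma~\ref{lem:cocenter-basis} lets me replace $[\alpha^+]$ by an $\F$-combination of the $[e_\blambda^+]$, $\blambda\in\partition^\ell(n')$, and likewise for $[\beta^-]$. Hence $\Tr(\cH_B)$ is spanned by the normal-ordered classes $[e_\blambda^+\,e_\bmu^-]=[e_\blambda^+][e_\bmu^-]$, each of which lies in the image of $\chern$, so $\chern$ is surjective.

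For injectivity I would exploit the action of annular diagrams on closed diagrams from Section~\ref{sec:diagrammatic-algebras}: composing $\chern$ with the injection $\Tr(\cH_B)\otimes_{\F_{q,\pi}}\F\hookrightarrow\Tr(\cH_B^*)$ of Proposition~\ref{prop:trace-injection} and with this action makes $\fh_L\cong K_0(\cH_B)\otimes_\Z\F$ act on the space $Z(\cH_B^*)$ of closed diagrams. I would compute the action of the creation classes $[e_{\lambda,i}^+]$ and annihilation classes $[e_{\lambda,i}^-]$ on closed diagrams and check that it agrees with the Fock space representation of $\fh_L$ on $\fh_L^+$ from Section~\ref{subsec:lattice-Heis-def}. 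Since that representation is faithful, $\ker\chern$ maps to zero under a composite of injective maps and must therefore vanish, giving injectivity and hence the theorem. I expect this identification of the diagrammatic action with the Fock representation to be the main obstacle: surjectivity is essentially bookkeeping with the local relations, but ruling out unexpected collapses among the normal-ordered classes $[e_\blambda^+\,e_\bmu^-]$ is a genuine Poincar\'e--Birkhoff--Witt phenomenon, and the cap--cup corrections in~\eqref{rel:down-up-double-cross} must be tracked carefully to confirm that they reproduce exactly the Heisenberg commutation relations rather than additional ones.
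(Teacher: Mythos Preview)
Your overall strategy---define $\varphi_B$ as the composite of $\chern^{-1}$ with the isomorphism $\alpha\colon K_0(\cH_B)\otimes_\Z\F\cong\fh_B$ from \cite[Th.~10.5]{RS15}---is exactly the paper's. The differences are in how you establish that $\chern$ is bijective, and in how much you take for granted about $\alpha$.

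The paper's argument for bijectivity is much more direct than yours and avoids the Fock space entirely. It quotes \cite[Lem.~10.2]{RS15} to obtain a \emph{complete} description of the degree-zero endomorphism algebras $\End_{\cH_B'}(\sQ_+^n\sQ_-^m)\cong(B_0^{\rtimes n})^{\op}\otimes_\F B_0^{\rtimes m}$, which yields an explicit decomposition $\Tr(\cH_B)=\bigoplus_{n,m}C_n^{\op}\otimes C_m$ with $C_n$ the cocenter of $B_0^{\rtimes n}$. Lemma~\ref{lem:cocenter-basis} then gives a basis $\{[e_\blambda^+ e_\bmu^-]\}$ of $\Tr(\cH_B)$, and since $\chern$ sends the basis of $K_0(\cH_B)$ from \cite[Th.~10.5]{RS15} bijectively onto this set, $\chern$ is an isomorphism. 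Injectivity also follows, independently, from \cite[Prop.~2.4]{BHLW14}, a general fact about Chern characters---so the hard work is purely surjectivity, and that comes for free from the cited description of the endomorphism algebras rather than from diagram chasing.

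Your injectivity argument has a genuine gap. The action of $\Tr(\cH_B)$ on $Z(\cH_B^*)$ passes through the map $\Tr(\cH_B)\to\Tr(\cH_B)\otimes_{\F_{q,\pi}}\F$ of Proposition~\ref{prop:trace-injection} (collapsing $q\mapsto1$, $\pi\mapsto-1$), which is not injective; faithfulness of the Fock representation would therefore only prove injectivity of $\chern\otimes_{\F_{q,\pi}}\F$, not of $\chern$ as an $\F_{q,\pi}$-algebra map. Moreover, the identification of that action with Fock space (Proposition~\ref{prop:Fock-space-isom}) in this paper actually \emph{uses} Theorem~\ref{theo:trace-identification}, so you would have to redo that verification independently to avoid circularity.

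Finally, you assert that $\alpha$ sends $[(\sQ_\pm^n,e_{\lambda,i}^\pm)]_{\cong}$ to $s_{\lambda,i}^\pm$, but \cite[Th.~10.5]{RS15} and \cite[Prop.~5.5]{RS15} are stated in terms of the generators corresponding to one-row partitions $\lambda=(n)$. The paper fills this in by invoking the Giambelli rules (cf.\ \cite[Lem.~5, Rem.~6]{CL12}) to pass from $\lambda=(n)$ to arbitrary $\lambda$; you should not skip this step.
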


\begin{proof}
  Recall from Section~\ref{subsec:trace-def} that we have $\Tr (\cH_B') = \Tr (\cH_B'')$, where $\cH_B''$ is the full subcategory of $\cH_B'$ whose objects are shifts of $\sQ_+^n \sQ_-^m$, $n,m \in \N$.  By \cite[Lem.~10.2]{RS15}, we have
  \[
    \End_{\cH_B'}(\sQ_+^n \sQ_-^m) = \END_{\cH_B'}(\sQ_+^n \sQ_-^m)_{0,0} \cong (B^{\rtimes n}_0)^\op \otimes_\F B^{\rtimes m}_0,\quad n,m \in \N,
  \]
  where the isomorphism (which we will view as identification) comes from \eqref{eq:FSn-end-spaces}.  It follows that
  \[
    \Tr(\cH_B) = \bigoplus_{n,m \in \N} C_n^\op \otimes C_m,
  \]
  where, for $n \in \N$,
  \[
    C_n = B^{\rtimes n}_0 / \Span_\F \{fg-gf \mid f,g \in B^{\rtimes n}_0\}
  \]
  is the cocenter of the ring $B^{\rtimes n}_0$.

  By \cite[Th.~10.5]{RS15}, $K_0(\cH_B)$ has an $\F_{q,\pi}$-basis given by
  \[
    \left[ \left( \sQ_+^{|\blambda|} \sQ_-^{|\bmu|},e_\blambda^+ e_\bmu^- \right) \right]_{\cong},\quad
    \blambda, \bmu \in \partition^\ell.
  \]
  Since
  \[
    \chern \left( \left[ \left( \sQ_+^{|\blambda|} \sQ_-^{|\bmu|},e_\blambda^+ e_\bmu^- \right) \right]_{\cong} \right) = [e_\blambda^+ e_\bmu^-],
  \]
  it follows from Lemma~\ref{lem:cocenter-basis} that $\chern$ is an isomorphism.  (Note that injectivity also follows from \cite[Prop.~2.4]{BHLW14}.)

  The isomorphism $\varphi_B$ in the last statement of the theorem is given by the composition
  \[
  	\varphi_B = \alpha \circ \chern^{-1},
  \]
  where $\alpha \colon K_0(\cH_B) \to \fh_B$ is the isomorphism of $\F$-algebras from  \cite[Thm.~10.5]{RS15} (after tensoring over $\Z$ with $\F$).  To see that
  \[
  	\varphi_B \left( \left[ e_{\lambda,i}^\pm \right] \right) = s_{\lambda,i}^\pm,
  \]
  it suffices to show that
  \begin{equation}\label{eq:Schur-iso}
    \alpha \left(	\left[ \left( \sQ_\pm^n,e_{\lambda,i} \right) \right]_{\cong} \right)
    = s_{\lambda,i}^\pm
  \end{equation}
  for all partitions $\lambda \vdash n$.  By \cite[Prop.~5.5]{RS15}, \cite[Th.~9.2]{RS15}, and Proposition~\ref{prop:hh-presentation}, the equation \eqref{eq:Schur-iso} holds for a one-part partition $\lambda = (n)$ (and also for the transpose $\lambda = (1^n)$).  Now, straightforward generalizations of the proof of \cite[Lem.~5]{CL12} and \cite [Rem.~6]{CL12} show that the expression for $\left[ \left( \sQ_\pm^n,e_{\lambda,i} \right) \right]_{\cong}$ as a linear combination of products of the generators  $\left[ \left( \sQ_\pm^{m},e_{(m),i} \right) \right]_{\cong}$, $m \in \N$, is given by the Giambelli rules in the ring of symmetric functions for expressing the Schur functions in terms of complete homogeneous symmetric functions. On the other hand, in $\fh_B$, the expressions for $s_{\lambda,i}^{\pm}$ in terms of the generators $s_{(m),i}^{\pm}$ are also given by the Giambelli rules.   Thus, since \eqref{eq:Schur-iso} holds when $\lambda = (n)$, and $\alpha$ is an algebra isomorphism, it follows that \eqref{eq:Schur-iso} holds for all $\lambda$.
\end{proof}

\begin{rem} \label{rem:varphiB-graded}
  Recall that the algebra $\fh_B$ has a natural $\Z$-grading, where we view a degree $n$ symmetric function in $H^\pm$ as living in degree $\pm n$ (see Section~\ref{subsec:lattice-Heis-def}).  Then the isomorphism $\varphi_B$ sends elements in $\Tr(\cH_B)$ of ascension $n$ to degree $n$ elements of $\fh_B$, for $n \in \Z$.  Thus $\varphi_B$ is also an isomorphism of \emph{graded} algebras.
\end{rem}

\begin{rem} \label{rem:W-alg-connection}
  One should compare Theorem~\ref{theo:trace-identification} to \cite[Th.~1]{CLLS15}, which considered the case $B=\F$.  In that case, the trace $\Tr(\cH_\F)$ is a quotient of a W-algebra, and is much larger than the corresponding Heisenberg algebra $\fh_\F$.  This is because of the difference pointed out in Remark~\ref{rem:morphism-grading}.  Of course, instead of $\cH_B$ one could consider the corresponding graded category $\cH_B^*$.  As noted in the introduction, the trace of $\cH_B^*$ should be related to W-algebras.
\end{rem}

Let
\begin{equation} \label{eq:trace-pm}
  \Tr(\cH_B)^\pm = \bigoplus_{n \in \N} \left[ \End_{\cH_B'}(\sQ_\pm^n) \right].
\end{equation}
By Theorem~\ref{theo:trace-identification}, we have an isomorphism of $\F_{q,\pi}$-modules
\begin{equation} \label{eq:trace-pm-decomp}
  \Tr(\cH_B) \cong \Tr(\cH_B)^+ \otimes_{\F_{q,\pi}} \Tr(\cH_B)^-,
\end{equation}
and
\[
  \varphi_B \left( \Tr(\cH_B)^\pm \right) = \fh_B^\pm.
\]

For $n \in \N$ and $i \in \{1,\dotsc,\ell\}$, let $P_{n,i}^\pm$ be the class in $\Tr(\cH_B)$ of the element of $\End_{\cH_B'} (\sQ_\pm^n)$ corresponding, under the homomorphism \eqref{eq:FSn-end-spaces}, to $e_i^{\otimes n} w_n$, where $w_n$ is a cycle in $S_n$ of length $n$.  Then
\begin{equation} \label{eq:Plambda-def}
  P_{\lambda,i}^\pm := P_{\lambda_1,i}^\pm P_{\lambda_2,i}^\pm \dotsm P_{\lambda_\ell,i}^\pm,\quad \lambda = (\lambda_1,\dotsc,\lambda_\ell) \in \partition,
\end{equation}
is the class of the element of $\End_{\cH_B'} (\sQ_\pm^n)$ corresponding, under the homomorphism \eqref{eq:FSn-end-spaces}, to $e_i^{\otimes n} w_\lambda$, where $w_\lambda$ is a permutation in $S_n$ of cycle type $\lambda \vdash n$.  For example, diagrammatically,
\[
  P_{(3,2),i}^+ =\
  \begin{tikzpicture}[anchorbase]
    \filldraw[thick,draw=green!60!black,fill=green!20!white] (-1,-0.7) -- (-1,0.7) .. controls (-1,2) and (3,2) .. (3,0.7) -- (3,-0.7) .. controls (3,-2) and (-1,-2) .. (-1,-0.7);
    \filldraw[thick,draw=green!60!black,fill=white] (0.6,-0.25) -- (0.6,0.25) .. controls (0.6,0.5) and (1.4,0.5) .. (1.4,0.25) -- (1.4,-0.25) .. controls (1.4,-0.5) and (0.6,-0.5) .. (0.6,-.25);
    \draw[->-=.25,color=black] (-0.6,0.5) .. controls (-0.6,1.5) and (2.6,1.5) .. (2.6,0.5) -- (2.6,-0.5) .. controls (2.6,-1.5) and (-0.6,-1.5) .. (-0.6,-0.5) .. controls (-0.6,0) and (-0.2,0) .. (-0.2,0.5);
    \draw[->-=.25,color=black] (-0.4,0.5) .. controls (-0.4,1.3) and (2.4,1.3) .. (2.4,0.5) -- (2.4,-0.5) .. controls (2.4,-1.3) and (-0.4,-1.3) .. (-0.4,-0.5) .. controls (-0.4,0) and (-0.6,0) .. (-0.6,0.5);
    \draw[->-=.25,color=black] (-0.2,0.5) .. controls (-0.2,1.1) and (2.2,1.1) .. (2.2,0.5) -- (2.2,-0.5) .. controls (2.2,-1.1) and (-0.2,-1.1) .. (-0.2,-0.5) .. controls (-0.2,0) and (-0.4,0) .. (-0.4,0.5);
    \draw[->-=.25,color=black] (0,0.4) -- (0,0.5) .. controls (0,0.9) and (2,0.9) .. (2,0.5) -- (2,-0.5) .. controls (2,-0.9) and (0,-0.9) .. (0,-0.5) -- (0,-0.4) .. controls (0,0.1) and (0.2,-0.1) .. (0.2,0.4);
    \draw[->-=.25,color=black] (0.2,0.4) .. controls (0.2,0.7) and (1.8,0.7) .. (1.8,0.4) -- (1.8,-0.4) .. controls (1.8,-0.7) and (0.2,-0.7) .. (0.2,-0.4) .. controls (0.2,0.1) and (0,-0.1) .. (0,0.4);
    \bluedot{(-0.57,0.7)};
    \bluedot{(-0.39,0.6)};
    \bluedot{(-0.2,0.5)};
    \bluedot{(0.03,0.6)};
    \bluedot{(0.2,0.45)};
  \end{tikzpicture}\ ,
\]
where the dots are labeled by the idempotent $e_i \in B$.

\begin{prop} \label{prop:Pp-isom}
  Recalling the isomorphism $\varphi_B$ of \eqref{eq:trace-hB-isom}, we have
  \[
    \varphi_B \left( P_{\lambda,i}^\pm \right) = p_{\lambda,i}^\pm,\quad \lambda \in \partition,\ i \in \{1,\dotsc,\ell\}.
  \]
\end{prop}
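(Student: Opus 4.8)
The plan is to reduce to the case of a one-part partition and then invoke the classical relationship, mediated by the Frobenius characteristic map, between power sums, Schur functions, and the irreducible characters of the symmetric group. First I would use multiplicativity: since $\varphi_B$ is an $\F_{q,\pi}$-algebra homomorphism (Theorem~\ref{theo:trace-identification}) restricting to an isomorphism $\Tr(\cH_B)^\pm \cong \fh_B^\pm$, and since both $P_{\lambda,i}^\pm = P_{\lambda_1,i}^\pm \dotsm P_{\lambda_{\ell(\lambda)},i}^\pm$ by \eqref{eq:Plambda-def} and $p_{\lambda,i}^\pm = p_{\lambda_1,i}^\pm \dotsm p_{\lambda_{\ell(\lambda)},i}^\pm$ are products of their one-part constituents in the commutative algebra $\fh_B^\pm$, it suffices to prove $\varphi_B(P_{n,i}^\pm) = p_{n,i}^\pm$ for every $n \in \N_+$ and every $i \in \{1,\dotsc,\ell\}$.

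Next I would expand $P_{n,i}^\pm$ in the idempotent basis of the cocenter. Recall from the proof of Theorem~\ref{theo:trace-identification} that $\End_{\cH_B'}(\sQ_\pm^n)$ is identified (up to taking opposites) with $B_0^{\rtimes n}$, and that $P_{n,i}^\pm$ is the class of $e_i^{\otimes n} w_n$, where $w_n$ is an $n$-cycle. Because $B_0$ is semisimple and $e_i$ is a primitive idempotent, $e_i B_0 e_i \cong \F$, so the diagonal idempotent $e_i^{\otimes n}$ commutes with $S_n$ and the corner algebra satisfies $e_i^{\otimes n} B_0^{\rtimes n} e_i^{\otimes n} \cong (e_i B_0 e_i)^{\otimes n} \rtimes S_n \cong \F S_n$. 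Under this identification $e_i^{\otimes n} w_n$ corresponds to $w_n \in S_n$, and the idempotents $e_{\mu,i} = e_i^{\otimes n} e_\mu$ correspond to the primitive idempotents $e_\mu$ of $\F S_n$ for $\mu \vdash n$. By Lemma~\ref{lem:cocenter-basis} the classes $[e_{\mu,i}^\pm]$, $\mu \vdash n$, form a basis of the $e_i$-sector of the cocenter, and the dual basis is given by the irreducible matrix traces, that is, the symmetric group characters $\chi^\mu$. Hence the coefficient of $[e_{\mu,i}^\pm]$ in $P_{n,i}^\pm$ is the trace of $w_n$ in the irreducible $S_n$-representation labelled by $\mu$, giving
\[
  P_{n,i}^\pm = \sum_{\mu \vdash n} \chi^\mu(w_n)\, [e_{\mu,i}^\pm].
\]
(On the $+$ side one passes through the opposite algebra, but the cocenter of $A$ and $A^\op$ coincide, and since every irreducible of $S_n$ is self-dual the character values are unchanged.)

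Finally I would apply $\varphi_B$. Using $\varphi_B([e_{\mu,i}^\pm]) = s_{\mu,i}^\pm$ from \eqref{eq:trace-hB-isom}, we get $\varphi_B(P_{n,i}^\pm) = \sum_{\mu \vdash n} \chi^\mu(w_n)\, s_{\mu,i}^\pm$. The classical Frobenius characteristic formula $p_n = \sum_{\mu \vdash n} \chi^\mu(w_n)\, s_\mu$ (see \cite{Mac95}) then identifies the right-hand side with $p_{n,i}^\pm$, which settles the one-part case and hence the proposition.

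The main obstacle is the middle step: establishing the expansion $P_{n,i}^\pm = \sum_\mu \chi^\mu(w_n)[e_{\mu,i}^\pm]$ cleanly. This requires identifying the relevant corner of $B_0^{\rtimes n}$ with the group algebra $\F S_n$ (so that the parity subtleties vanish, as all idempotents are even and the $S_n$-part sits in $\Z$-degree zero, whence the super cocenter of Lemma~\ref{lem:cocenter-basis} agrees with the ordinary cocenter $C_n$ used in the proof of Theorem~\ref{theo:trace-identification}) and then recognizing that the change of basis from conjugacy-class representatives $[w_\mu]$ to idempotent classes $[e_\mu]$ in the cocenter is exactly the character table. Once this is in place, the character values $\chi^\mu(w_n)$ produced by the cocenter expansion coincide with those appearing in the power-sum-to-Schur expansion, and the identification is immediate.
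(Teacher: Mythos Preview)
Your proposal is correct and takes essentially the same approach as the paper: both arguments establish that in the cocenter of $\F S_n$ the class of a permutation of cycle type $\mu$ expands as $\sum_\lambda \chi^\lambda(\mu)\,[e_\lambda]$, and then invoke the Frobenius formula $p_\mu = \sum_\lambda \chi^\lambda(\mu)\, s_\lambda$ together with $\varphi_B([e_{\lambda,i}^\pm]) = s_{\lambda,i}^\pm$. The only cosmetic differences are that you first reduce to one-part partitions by multiplicativity and then argue directly in the cocenter via the Wedderburn decomposition, whereas the paper handles general $\mu$ at once and derives the cocenter identity by passing through the center $Z(\F S_n)$ and the primitive central idempotents.
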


\begin{proof}
  We fix $i \in \{1,\dotsc,\ell\}$ and drop the subscript $i$ from the notation.  Let $L_\lambda$ denote the irreducible representation of the symmetric group $S_n$ indexed by the partition $\lambda\vdash n$, with character $\chi_\lambda$.  For $\mu \vdash n$, let
  \[
    C_\mu = \left( \sum_{g \text{ of cycle type }\mu} g \right) \in Z(\C[S_n])
  \]
  denote the sum over permutations of cycle type $\mu$.  The set $\{C_\mu\}_{\mu\vdash n}$ is a basis of the center $Z(\C[S_n])$.  Another basis of $Z(\C[S_n])$ is given by the primitive central idempotents $\{\tilde{e}_\lambda\}$.  We have
  \[
    \chi_\lambda(\tilde{e}_\mu) = \delta_{\mu,\lambda} \dim L_\lambda
    \quad \text{and} \quad
  	\chi_\lambda(C_\mu) = z_\mu \chi_\lambda(g_\mu),
  \]
  where $g_\mu$ is any permutation of cycle type $\mu$ and $z_\mu$ is the size of the conjugacy class of type $\mu$ (see \eqref{eq:full-pairing-determined-by-V}).  Thus, in $Z(\C[S_n])$, we may express the basis $\{C_\mu\}$ in terms of the basis $\{\tilde{e}_\lambda\}$ by
  \[
    C_\mu = \sum_{\lambda} \frac{z_{\mu}}{\dim L_\lambda} \chi_\lambda(g_\mu) \tilde{e}_\lambda.
  \]
  \details{
    We can write
    \[
      C_\mu = \sum_\lambda a_{\mu,\lambda} \tilde{e}_\lambda.
    \]
    Thus, for $\lambda \in \partition$, we have
    \[
      z_\mu \chi_\lambda(g_\mu)
      = \chi_\lambda(C_\mu)
      = \sum_\nu a_{\mu,\nu} \chi_\lambda(\tilde{e}_\nu)
      = a_{\mu,\lambda} \dim L_\lambda
      \implies a_{\mu,\lambda} = \frac{z_{\mu}}{\dim L_\lambda} \chi_\lambda(g_\mu).
    \]
  }
  Now, under the map $\iota \colon Z(\C[S_n]) \rightarrow \Tr(\C[S_n])$, we have
  \[
    \iota(C_\mu) = z_\mu \iota(g_\mu), \text{ and } \iota(\tilde{e}_\lambda) = (\dim L_\lambda) \iota(e_\lambda),
  \]
  where we recall that $e_\lambda \in \C[S_n]$ is a Young idempotent such that $\C[S_n]e_\lambda \cong L_\lambda$.  It follows that for any permutation $g_\mu\in S_n$ of cycle type $\mu$, we have
  \[
  	\iota(g_\mu) = \sum_{\lambda} \chi_\lambda(g_\mu) \iota(e_\lambda).
  \]
  In other words, in $\Tr(\C[S_n])$, the transition matrix between the basis given by the classes of permutations and the basis given by Young idempotents is given by the character table of $S_n$.  On the other hand, by \cite[(I.7.8)]{Mac95}, the transition matrix between the power sum basis and the Schur function basis is also given by the character table of $S_n$:
  \[
   	p_\mu = \sum_{\lambda} \chi_\lambda(g_\mu) s_\lambda.
  \]
  Thus, in any linear map (such as $\varphi_B$) from $\Tr(\C[S_n])$ to symmetric functions, if the class of the Young idempotent $e_\lambda$ is sent to the Schur function $s_\lambda$ for all $\lambda$, then the class of a permutation $g_\mu$ of cycle type $\mu$ will be sent to the power sum symmetric function $p_\mu$.  This completes the proof.
\end{proof}

\begin{rem}
  Proposition~\ref{prop:Pp-isom} illustrates an important feature of the trace categorification of $\fh_B$ given by Theorem~\ref{theo:trace-identification}.  Namely, the power sum symmetric functions have a natural interpretation as classes of certain permutations.  In the Grothendieck group categorification of $\fh_B$ given in \cite[Th.~10.5]{RS15}, the power sum symmetric functions do not seem to have such a natural interpretation.
\end{rem}

\subsection{Action of the trace on the center} \label{subsec:trace-action-on-center}

All objects of $\cH_B$ have biadjoints up to grading and parity shifts, and all adjunctions in $\cH_B$ are cyclic.  Thus, the trace $\Tr(\cH_B^*)$ acts on the graded center
\[
  Z(\cH_B^*) := \HOM_{\cH_B^*} (\varnothing,\varnothing),
\]
as we now explain.  Note that $Z(\cH_B^*)$ is naturally an $\F$-algebra, but does not have the natural structure of an $\F_{q,\pi}$-module arising from shifts, since the nontrivial shift of an endomorphism of $\varnothing$ is no longer an endomorphism of $\varnothing$.  (Rather, it is an endomorphism of a shift of $\varnothing$.)

Essentially, the action of $\Tr(\cH_B^*)$ on $Z(\cH_B^*)$ is the usual one for a pivotal monoidal category (or, more generally, a pivotal 2-category).  See, for example, \cite[\S9.1]{BHLW14}.  However, since in the category $\cH_B$ the objects $\sQ_+$ and $\sQ_-$ are only biadjoint up to a grading shift, we are forced to work in the graded category $\cH_B^*$ instead of $\cH_B$ itself.  In addition, we need to carefully define the grading conventions in our action.

The action is defined as follows.  The graded center $Z(\cH_B^*)$ consists of linear combinations of closed diagrams $D$, which are morphisms $\varnothing \to \{- \deg D\} \varnothing$, where $\deg D \in \Z \times \Z_2$ is the degree of $D$.  The action of $[f] \in \Tr(\cH_B^*)$ on such a diagram $D$ is given by placing $D$ in the interior of the annulus and interpreting the resulting diagram as an element in $Z(\cH_B^*)$.  (As explained in Section~\ref{sec:diagrammatic-algebras}, we always first isotope the diagrams so that any dots in the annular diagram are higher than any dots in the closed diagram.)  Thus, if
\[
  f \in \HOM_{\cH_B'} \left( \sQ_+^n \sQ_-^m, \sQ_+^n \sQ_-^m \right),
\]
is of degree $\deg f \in \Z \otimes \Z_2$, then
\[
  [f] \cdot D =
  \begin{tikzpicture}[anchorbase]
    \draw (-0.7,-0.25) -- (-0.7,0.25) -- (0.3,0.25) -- (0.3,-0.25) -- (-0.7,-0.25);
    \draw (-0.2,0) node {$f$};
    \draw (-0.6,0.25) -- (-0.6,0.5) .. controls (-0.6,1.5) and (2.6,1.5) .. (2.6,0.5) -- (2.6,-0.5) .. controls (2.6,-1.5) and (-0.6,-1.5) .. (-0.6,-0.5) -- (-0.6,-0.25);
    \draw (-0.4,0.25) -- (-0.4,0.5) .. controls (-0.4,1.3) and (2.4,1.3) .. (2.4,0.5) -- (2.4,-0.5) .. controls (2.4,-1.3) and (-0.4,-1.3) .. (-0.4,-0.5) -- (-0.4,-0.25);
    \draw (-0.2,0.25) -- (-0.2,0.5) .. controls (-0.2,1.1) and (2.2,1.1) .. (2.2,0.5) -- (2.2,-0.5) .. controls (2.2,-1.1) and (-0.2,-1.1) .. (-0.2,-0.5) -- (-0.2,-0.25);
    \draw (0,0.25) -- (0,0.5) .. controls (0,0.9) and (2,0.9) .. (2,0.5) -- (2,-0.5) .. controls (2,-0.9) and (0,-0.9) .. (0,-0.5) -- (0,-0.25);
    \draw (0.2,0.25) -- (0.2,0.4) .. controls (0.2,0.7) and (1.8,0.7) .. (1.8,0.4) -- (1.8,-0.4) .. controls (1.8,-0.7) and (0.2,-0.7) .. (0.2,-0.4) -- (0.2,-0.25);
    \draw (1,0) node {$D$};
  \end{tikzpicture}
  \in \HOM_{\cH_B'} \big( \{n \delta, 0\} \varnothing, \{(m \delta, 0) - \deg D - \deg f\} \varnothing \big) \subseteq Z(\cH_B^*).
\]
Note in particular that the degree of $[f]$ as an operator on $Z(\cH_B^*)$ is
\begin{equation} \label{eq:degree-of-operator-on-center}
  \deg f + (n \delta - m \delta, 0) = \deg f + (\delta \asc [f],0).
\end{equation}

Via the injection of Proposition~\ref{prop:trace-injection}, we have an induced action of $\Tr(\cH_B)$ on $Z(\cH_B^*)$.  The identity morphism $\one_\varnothing$ of the trivial object (empty sequence) of $\cH_B$ is the empty diagram.  Acting on the empty diagram gives a map
\[
  \Tr(\cH_B) \to Z(\cH_B^*),\quad [f] \mapsto [f] \cdot 1_\varnothing.
\]
We define
\begin{equation} \label{eq:FB-def}
  \cF_B = \Tr(\cH_B) \cdot 1_\varnothing.
\end{equation}
In light of Proposition~\ref{prop:Fock-space-isom} below, we call $\cF_B$ the \emph{diagrammatic Fock space}.  Let $F_B = \fh_B^+$ denote the Fock space representation of $\fh_B$, and let $F_{B,\F} = F_B \otimes_{\F_{q,\pi}} \F$.

\begin{prop} \label{prop:Fock-space-isom}
  Assume $B \ne B_0$ (i.e.\ $\delta > 0$).  We have an isomorphism of $\F$-modules
  \begin{equation} \label{eq:Fock-space-isom}
    \phi_B \colon \cF_B \to F_B \otimes_{\F_{q,\pi}} \F,\quad [e_\blambda^+] \cdot 1_\varnothing \mapsto s_\blambda^+ := s_{\lambda^1,1}^+ \dotsm s_{\lambda^\ell,\ell}^+,\quad \blambda = (\lambda^1,\dotsc,\lambda^\ell) \in \partition^\ell,
  \end{equation}
  extended by linearity.  Furthermore, the diagram
  \[
    \xymatrix{
      \Tr(\cH_B) \otimes \cF_B \ar[r] \ar[d]_{\varphi_B \otimes \phi_B} & \cF_B \ar[d]^{\phi_B}_{\cong} \\
      \fh_B \otimes F_{B,\F} \ar[r] & F_{B,\F}
    }
  \]
  commutes, where the horizontal arrows are given by the Fock space actions.
\end{prop}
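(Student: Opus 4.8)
The plan is to recognize $\cF_B$ as a cyclic module and match it with the abstract Fock space. Via the isomorphism $\varphi_B \colon \Tr(\cH_B) \cong \fh_B$ of Theorem~\ref{theo:trace-identification}, the diagrammatic action makes $\cF_B = \Tr(\cH_B)\cdot 1_\varnothing$ a cyclic $\fh_B$-module generated by $1_\varnothing$. On the other side, the Fock space $F_B = \fh_B^+$ is, as an $\fh_B$-module, the cyclic module $\fh_B/J$, where $J$ is the left ideal generated by the annihilation operators $p_{n,i}^-$ ($n \ge 1$); under the quotient $\fh_B \to \fh_B/J \cong \fh_B^+$ the element $s_\blambda^+$ is its own image. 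Consider the $\fh_B$-module map $a \mapsto \varphi_B^{-1}(a)\cdot 1_\varnothing$. It suffices to prove: (a) the vacuum condition $P_{n,i}^-\cdot 1_\varnothing = 0$ for all $n \ge 1$ and all $i$, so that this map kills $J$ and factors through an $\fh_B$-linear surjection $\bar\Phi \colon F_{B,\F}\twoheadrightarrow \cF_B$ with $\bar\Phi(s_\blambda^+) = \varphi_B^{-1}(s_\blambda^+)\cdot 1_\varnothing = [e_\blambda^+]\cdot 1_\varnothing$ (using Theorem~\ref{theo:trace-identification} and Proposition~\ref{prop:Pp-isom}); and (b) that $\bar\Phi$ is injective. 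Granting (a) and (b), the map $\phi_B = \bar\Phi^{-1}$ is the stated isomorphism, and the commuting square is automatic, since it expresses exactly the $\fh_B$-linearity of $\bar\Phi$, which holds by construction.

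For (a) I would argue by degree. By~\eqref{eq:degree-of-operator-on-center}, the operator $[P_{n,i}^-]$ has internal $\Z$-degree $\deg(e_i^{\otimes n}w_n) + \delta\asc[P_{n,i}^-] = -n\delta$, strictly negative for $n \ge 1$ since $\delta > 0$. As $1_\varnothing$ sits in degree $0$, the closed diagram $[P_{n,i}^-]\cdot 1_\varnothing$ lies in degree $-n\delta < 0$. Now this diagram is a single immersed oriented loop (the $n$-cycle $w_n$ closes up to one component) carrying the idempotent $e_i$, and any such loop reduces, via the crossing, curl, and circle relations~\eqref{rel:up-up-double-cross}--\eqref{rel:left-curl}, to a scalar multiple of $1_\varnothing$; since a scalar lives in degree $0$ while our diagram lives in degree $-n\delta \ne 0$, that scalar must be $0$. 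As the $P_{n,i}^-$ generate $\fh_B^-$, this gives $\varphi_B^{-1}(J)\cdot 1_\varnothing = 0$, hence $\bar\Phi$.

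The crux is (b). Because $\bar\Phi$ is a graded surjection, injectivity is equivalent to the linear independence of $\{[e_\blambda^+]\cdot 1_\varnothing : \blambda \in \partition^\ell\}$ in $Z(\cH_B^*)$, i.e.\ to injectivity of the ``act on the vacuum'' map $\Tr(\cH_B)^+ \to Z(\cH_B^*)$. When the induced pairing on $L = K_0(B)$ is nondegenerate (in particular whenever $\dim B_\even - \dim B_\odd \ne 0$), this is quick: $\ker\bar\Phi$ is a graded $\fh_B$-submodule of $F_{B,\F}$, and a nonzero element of minimal degree in it would be a singular vector annihilated by every $p_{n,i}^-$; nondegeneracy of the associated Shapovalov form forces such a vector into degree $0$, where $\bar\Phi$ is visibly injective, so $\ker\bar\Phi = 0$. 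I expect the main obstacle to be handling this \emph{uniformly}, since the specialized pairing can degenerate (for instance when $\dim B_\even = \dim B_\odd$), in which case $\fh_B^-$ acts by zero on $F_{B,\F}$ and the module acquires many submodules, so the Shapovalov argument breaks down. I would overcome this diagrammatically rather than through the pairing: I would show that acting on the vacuum agrees, modulo diagrams supported on strictly fewer strands, with multiplication in $Z(\cH_B^*)$, so that the $[e_\blambda^+]\cdot 1_\varnothing$ become distinct monomials in a free (super)commutative subalgebra of an associated graded of $Z(\cH_B^*)$ generated by the creation bubbles $[P_{n,i}^+]\cdot 1_\varnothing$. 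Linear independence of distinct monomials is then automatic. This is exactly the comparison between the trace action and multiplication in the center furnished by the filtration of Section~\ref{sec:filtrations}, and invoking it completes the argument.
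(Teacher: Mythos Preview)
Your overall strategy---recognise $\cF_B$ as a cyclic $\fh_B$-module via $\varphi_B$, verify the vacuum condition, then prove that the induced surjection from the abstract Fock space is injective---matches the paper's approach, and the deduction of the commuting square from $\fh_B$-linearity is exactly right.

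For (a), your argument is correct but more laboured than necessary. The paper does not reduce any specific loop; it simply notes that every class coming from $\End_{\cH_B'}(\sQ_-^m)$ with $m>0$ acts on $Z(\cH_B^*)$ as an operator of strictly negative $\Z$-degree (by \eqref{eq:degree-of-operator-on-center}), while $Z(\cH_B^*)$ is concentrated in nonnegative degree by \cite[Prop.~8.9]{RS15}. This immediately yields $\cF_B=\Tr(\cH_B)^+\cdot 1_\varnothing$ without examining any particular diagram.

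For (b), the paper invokes the Stone--von~Neumann theorem for Heisenberg doubles \cite[Th.~2.11]{SY15} directly: once one knows $\cF_B$ is a nonzero lowest-weight module (nonvanishing is checked via the action functors $\bF_n$ of \cite[Th.~7.4]{RS15}, which send a clockwise circle to $n$), the theorem gives $\cF_B\cong F_{B,\F}$. Your Shapovalov argument is essentially this theorem unpacked, so on that point you and the paper agree.

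Your proposed detour around the degenerate case, however, has a genuine gap. You claim that the creation bubbles $[P_{n,i}^+]\cdot 1_\varnothing$ generate a \emph{free} (super)commutative subalgebra of $\gr Z(\cH_B^*)$, so that the $[e_\blambda^+]\cdot 1_\varnothing$ become linearly independent monomials. Section~\ref{sec:filtrations} does not prove this. Proposition~\ref{prop:encircle-versus-juxtapose} and Corollary~\ref{cor:center-filtration} show the filtration is multiplicative, and Proposition~\ref{prop:Pn-dotted-circle} identifies $\gr(P_n^+\cdot 1_\varnothing)$ with a clockwise dotted circle, but the \emph{algebraic independence} of those circles is established only for $B=\F$ (Corollary~\ref{cor:P-lambda-basis}), and that proof imports \cite[Prop.~3]{Kho14}. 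For general $B$ with $\delta>0$ the structure of $Z(\cH_B^*)$ is precisely the open problem \cite[Conj.~8.10]{RS15} alluded to in the remark following Proposition~\ref{prop:Fock-space-isom}; you cannot assume it. So your alternative does not go through as written, and the paper's route is simply to rest on Stone--von~Neumann rather than to argue inside $Z(\cH_B^*)$.
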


\begin{proof}
  As in the proof of Theorem~\ref{theo:trace-identification}, $\Tr(\cH_B)$ is spanned by the classes of $(B^{\rtimes n}_0)^\op \otimes_\F B^{\rtimes m}_0 \cong \End_{\cH_B'}(\sQ_+^n \sQ_-^m)$, $n,m \in \N$.  Now, classes of elements of $\End_{\cH_B'} \sQ_-^m$ have negative ascensions, and thus are operators on $Z(\cH_B^*)$ of strictly negative degree.  However, by \cite[Prop.~8.9]{RS15}, $Z(\cH_B^*)$ is concentrated in nonnegative degree.  It follows that
  \begin{equation} \label{eq:tr-action-on-empty-diag}
    \cF_B
    = \Tr(\cH_B)^+ \cdot 1_\varnothing
  \end{equation}

  Now, $\cF_B \ne 0$ by \cite[Th.~7.4]{RS15}.  (For example, a clockwise circle is mapped to $n$ by the functor $\bF_n$ of \cite[Th.~7.4]{RS15}, and thus cannot be zero.)  Therefore, by the Stone--von Neumann Theorem, $\cF_B$ is the Fock space representation of $\fh_B \otimes_{\F_{q,\pi}} \F$.  (See, for example, \cite[Th.~2.11]{SY15} for a formulation of the Stone--von Neumann Theorem adapted to lattice Heisenberg algebras and more general Heisenberg doubles.)  Note that we need to tensor over $\F_{q,\pi}$ with $\F$ here since the action of $\Tr(\cH_B)$ is via the injection \eqref{eq:trace-injection}.  The result now follows from Theorem~\ref{theo:trace-identification}.
\end{proof}

\begin{rem}
  If $\delta > 0$, it follows from degree considerations that the action of $\Tr(\cH_B) \cong \fh_B \otimes_{\F_{q,\pi}} \F$ on $Z(\cH_B^*)$ is a lowest-weight representation (i.e.\ it is generated by lowest weight vectors).  Therefore, by the Stone--von Neumann Theorem, knowing the space of lowest-weight vectors would yield an explicit description of $Z(\cH_B^*)$.  A description of $Z(\cH_B^*)$ was conjectured in \cite[Conj.~8.10]{RS15}.
\end{rem}

\begin{prop} \label{prop:wreath-closures-span}
  We have
  \[
    \sum_{n=0}^\infty [(B^{\rtimes n})^\op] \cdot 1_\varnothing = Z(\cH_B^*),
  \]
  where $[(B^{\rtimes n})^\op] \subseteq \Tr(\cH_B^*)^+$ via the first algebra homomorphism of \eqref{eq:FSn-end-spaces},
\end{prop}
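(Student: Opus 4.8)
The inclusion $\sum_{n}[(B^{\rtimes n})^\op]\cdot 1_\varnothing \subseteq Z(\cH_B^*)$ is immediate, so the plan is to prove the reverse inclusion: every closed diagram lies in the span $W := \sum_{n}[(B^{\rtimes n})^\op]\cdot 1_\varnothing$. First I would establish the cyclicity statement $Z(\cH_B^*) = \Tr(\cH_B^*)\cdot 1_\varnothing$. Any closed diagram $D$ can be isotoped so that, across a generic vertical line, all of its crossings, dots, and local extrema lie to the left while the region to the right consists only of nested turnbacks; reading the left part as a morphism $f$ of the object recorded by the intersection points (with their orientations), the turnbacks realize $D$ as the right-closure $[f]\cdot 1_\varnothing$. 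Using crossings to reorder the tensor factors $\sQ_\pm$ (at the cost of the lower-order terms produced by \eqref{rel:down-up-double-cross}), it then suffices to treat $f \in \HOM_{\cH_B'}(\sQ_+^n \sQ_-^m, \sQ_+^n \sQ_-^m)$, in line with the reductions to $\cH_B''$ made in Section~\ref{subsec:trace-def}.

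The second step is to show that every \emph{clockwise} closed diagram already lies in $W$; by clockwise I mean one all of whose components can be oriented to point upward near the chosen cut. Such a diagram is the right-closure of an endomorphism of some $\sQ_+^N$ built only from crossings and $B$-labelled dots: by the standard trace (Markov-type) isotopies, each curl or winding can be traded for an extra strand carrying a crossing, as already illustrated by the fact that the closure of the transposition in $S_2$ is a single clockwise circle with one curl. Concretely, a clockwise diagram whose components realize a permutation $\sigma \in S_N$ with dots $b_1,\dots,b_N$ is exactly the image of $b_1\otimes\cdots\otimes b_N\cdot\sigma \in (B^{\rtimes N})^\op$ under the first homomorphism of \eqref{eq:FSn-end-spaces}, closed up and applied to $1_\varnothing$; compare the diagram for $P_{(3,2),i}^+$ preceding Proposition~\ref{prop:Pp-isom}. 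Since the dots may be arbitrary homogeneous elements of $B$, this accounts for all decorations, and hence all clockwise closed diagrams belong to $W$.

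The crux, and the main obstacle, is the third step: reducing an arbitrary $[f]\cdot 1_\varnothing$ to clockwise diagrams by eliminating the downward strands. I would induct on $m$. When $m \ge 1$, a downward strand, followed around the closure, must cross upward strands; resolving these crossings with \eqref{rel:down-up-double-cross} and \eqref{rel-up-down-double-cross} either passes strands by one another or, via the correction sum in \eqref{rel:down-up-double-cross}, replaces an up--down pair by a cup--cap, strictly decreasing the number of oppositely oriented strands. Iterating this, together with the vanishing of left curls \eqref{rel:left-curl} and the evaluation of clockwise circles \eqref{rel:ccc}, rewrites $[f]\cdot 1_\varnothing$ as a combination of closures with strictly fewer downward strands plus clockwise diagrams, so the induction closes. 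The delicate points are the termination of this resolution and the disposal of bare counterclockwise loops: such a loop has degree $0$ by \eqref{eq:leftcup-deg}--\eqref{eq:leftcap-deg}, and since $Z(\cH_B^*)$ is concentrated in nonnegative degrees by \cite[Prop.~8.9]{RS15}, with degree-zero part spanned by $1_\varnothing \in W$, it must reduce to a scalar multiple of $1_\varnothing$; the degree bound \eqref{eq:degree-of-operator-on-center} is what prevents the negative-degree contributions of the downward strands from surviving. I expect the genuine work to be confirming that this inductive resolution terminates within $W$, rather than the identification of clockwise diagrams carried out in the second step.
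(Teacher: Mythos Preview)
Your overall strategy is sound and contains the decisive trick (trading a right curl on the outermost clockwise strand for an extra strand and a crossing), but your assessment of where the work lies is inverted relative to the paper.

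The paper's proof is much shorter because it \emph{cites} rather than re-proves the elimination of counterclockwise material: by \cite[Prop.~8.9]{RS15}, $Z(\cH_B^*)$ is already spanned by products of clockwise circles carrying dots and right curls, and by \cite[Prop.~8.8]{RS15} such products can be nested, so $Z(\cH_B^*)=\Tr(\cH_B^*)\cdot 1_\varnothing$ and then $=\Tr(\cH_B^*)^+\cdot 1_\varnothing$ by the degree argument you mention.  This makes your Step~3 unnecessary; the resolution-of-crossings induction you sketch there would amount to reproving part of \cite[Prop.~8.9]{RS15}, and as you note yourself, termination of that resolution is not obvious as stated.

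What remains, and where the paper actually works, is what you treat as routine in Step~2: given $z\in\END_{\cH_B}(\sQ_+^n)$ built from a permutation, dots, and right curls, rewrite $[z]\cdot 1_\varnothing$ using only wreath product elements (no curls).  This is done by induction on the number of right curls.  A curl on an interior strand cannot be traded directly; one first slides it rightward through crossings using \eqref{rel:up-up-double-cross} and \cite[(8.4)]{RS15}, and that slide introduces correction terms with strictly fewer curls.  Only once the curl sits on the outermost strand does your Markov-type move apply: remove the curl by adding a new outermost strand and a crossing.  Your one-line ``each curl can be traded for an extra strand'' hides exactly this inductive correction, which is the actual content of the proof.
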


\begin{proof}
  By \cite[Prop.~8.9]{RS15}, $Z(\cH_B^*)$ is spanned by products of clockwise circles carrying dots and right curls.  Using \cite[Prop.~8.8]{RS15}, one can nest such circles inside one another.  It follows that $\Tr(\cH_B^*) \cdot 1_\varnothing = Z(\cH_B^*)$.  Then it follows by degree considerations, as in the proof of Proposition~\ref{prop:Fock-space-isom}, that $\Tr(\cH_B^*)^+ \cdot 1_\varnothing = Z(\cH_B^*)$.

  It remains to show that if $z \in \END_{\cH_B}(\sQ_+^n)$ is a diagram consisting of a permutation of the strands, carrying dots and right curls, then we can write $[z] \cdot 1_\varnothing$ as a linear combination of elements of the form $[z'] \cdot 1_\varnothing$, where $z' \in (B^{\rtimes m})^\op$, i.e., where $z'$ does not contain right curls.  We prove this result by induction on the number of right curls in $z$.

  Using \eqref{rel:up-up-double-cross} and \cite[(8.4)]{RS15}, we can move right curls to the right, modulo diagrams with fewer right curls (recall the notation \eqref{eq:right-curl}):
  \[
    \begin{tikzpicture}[anchorbase]
      \draw[->] (0,0) --(0,2);
      \draw[->] (1,0) -- (1,2);
      \redcircle{(0,0.5)};
    \end{tikzpicture}
    \ =\
    \begin{tikzpicture}[anchorbase]
      \draw[->] (0,0) .. controls (1,1) .. (0,2);
      \draw[->] (1,0) .. controls (0,1) .. (1,2);
      \redcircle{(0.25,0.25)};
    \end{tikzpicture}
    \ =\
    \begin{tikzpicture}[anchorbase]
      \draw[->] (0,0) .. controls (1,1) .. (0,2);
      \draw[->] (1,0) .. controls (0,1) .. (1,2);
      \redcircle{(0.73,1)};
    \end{tikzpicture}
    \ + \sum_{b \in \cB}\
    \begin{tikzpicture}[anchorbase]
      \draw[->] (0,0) .. controls (0,1) .. (1,2);
      \draw[->] (1,0) .. controls (1,1) .. (0,2);
      \bluedot{(0,0.25)} node [anchor=east,color=black] {$b$};
      \bluedot{(0.99,0.5)} node [anchor=west,color=black] {$b^\vee$};
    \end{tikzpicture}
  \]
  We can thus choose a right curl in $z$ and move it to the rightmost strand, modulo diagrams with fewer right curls.  Therefore we can assume one of the right curls in $z$ is on the rightmost strand.  But then $[z] \cdot 1_\varnothing = [z'] \cdot 1_\varnothing$, where $z'$ is obtained from $z$ by removing a right curl from the rightmost strand, adding a new strand on the far right, and adding a crossing between the two rightmost strands at the location of the removed right curl:
  \[
    \begin{tikzpicture}[anchorbase]
      \draw[->] (0,0) -- (0,1);
      \redcircle{(0,0.5)};
    \end{tikzpicture}
    \quad \rightsquigarrow \quad
    \begin{tikzpicture}[anchorbase]
      \draw[->] (0,0) .. controls (0,0.3) and (0.5,0.7) .. (0.5,1);
      \draw[->] (0.5,0) .. controls (0.5,0.3) and (0,0.7) .. (0,1);
    \end{tikzpicture}
  \]
  This completes the proof of the induction step.
\end{proof}

%
\section{A bilinear form on annular diagrams} \label{sec:diagrammatic-pairing}
%

\subsection{The bilinear form}

Recall the functor $\bF_0 \colon \cH_B \to \F\md$ defined in \cite[\S7]{RS15}.  This functor maps the graded center $Z(\cH_B^*)$ of the category $\cH_B$ to the center $Z(\F\md^*)$ of the graded category $\F\md^*$ of $\Z$-graded super vector spaces over $\F$.  Via the natural identification of $Z(\F\md^*)$ with $\F$, we will view $\bF_0$ as giving a map $Z(\cH_B^*) \to \F$.

An equivalent definition of $\bF_0$ is that it is the $\F$-linear projection
\[
  \bF_0 \colon Z(\cH_B^*) \to Z(\cH_B) \cong \F.
\]
of the center of the graded category $\cH_B^*$ onto its degree zero piece, which is the center of the category $\cH_B$.

Let
\begin{equation} \label{eq:ev1}
  \ev_{1,-1} \colon \F_{q,\pi} \to \F
\end{equation}
denote the evaluation at $q=1$ and $\pi=-1$.

\begin{lem} \label{lem:proj0-equivariance}
  The diagram
  \[
    \xymatrix{
      \Tr(\cH_B) \ar[r]^{\alpha} \ar[d]_{\varphi_B} & \cF_B \ar[r]^{\bF_0} \ar[d]_{\phi_B} & \F \\
      \fh_B \ar[r]^{\alpha'} & F_{B,\F} \ar[ur]_{\kappa_{0,\F}} &
    }
  \]
  commutes, where the maps $\alpha$ and $\alpha'$ are given by action on the vacuum vectors $1_\varnothing$ and $1_{\fh_B^+} \in F_B$, respectively, and where $\phi_B$ is as defined in \eqref{eq:Fock-space-isom}, and $\kappa_{0,\F}$ is the map \eqref{eq:pi0-def} followed by the map $\ev_{1-1}$.
\end{lem}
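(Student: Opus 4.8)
The plan is to break the pentagon into its left-hand square (governed by $\phi_B$, $\alpha$, $\alpha'$, $\varphi_B$) and its right-hand triangle (governed by $\bF_0$, $\phi_B$, $\kappa_{0,\F}$), establish each piece separately, and then compose. The overall commutativity $\bF_0 \circ \alpha = \kappa_{0,\F} \circ \alpha' \circ \varphi_B$ follows at once.

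First I would verify the left square, namely $\phi_B \circ \alpha = \alpha' \circ \varphi_B$, as a specialization of Proposition~\ref{prop:Fock-space-isom} to the vacuum vector. By that proposition, $\phi_B([f] \cdot v) = \varphi_B([f]) \cdot \phi_B(v)$ for all $[f] \in \Tr(\cH_B)$ and $v \in \cF_B$. Taking $v = 1_\varnothing$ and recalling that $\alpha([f]) = [f] \cdot 1_\varnothing$ while $\alpha'(x) = x \cdot 1_{\fh_B^+}$, it remains only to note that $\phi_B(1_\varnothing) = 1_{\fh_B^+}$. This is the $\blambda = \varnothing$ case of \eqref{eq:Fock-space-isom}, since $1_\varnothing = [e_\varnothing^+] \cdot 1_\varnothing$ and $s_\varnothing^+ = 1$ is the vacuum of $F_{B,\F}$.

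Next I would verify the right triangle, $\bF_0 = \kappa_{0,\F} \circ \phi_B$ on $\cF_B$. The point is that both sides are projections onto a degree-zero subspace, and that $\phi_B$ respects the grading. By \eqref{eq:tr-action-on-empty-diag}, the elements $[e_\blambda^+] \cdot 1_\varnothing$, $\blambda \in \partition^\ell$, span $\cF_B$; by \eqref{eq:degree-of-operator-on-center}, together with $\deg e_\blambda^+ = 0$ and $\asc[e_\blambda^+] = |\blambda|$, such an element sits in $\Z$-degree $\delta |\blambda|$ of $Z(\cH_B^*)$, while its image $s_\blambda^+$ lies in degree $|\blambda|$ of $F_{B,\F}$. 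Hence $\phi_B$ carries the degree-zero part of $\cF_B$, namely $\F \cdot 1_\varnothing$, onto the constants of $F_{B,\F}$, and sends everything of positive degree to positive degree. Since $\bF_0$ is the projection of $Z(\cH_B^*)$ onto its degree-zero piece $Z(\cH_B) \cong \F$ with $\bF_0(1_\varnothing) = 1$, while $\kappa_{0,\F}$ is the degree-zero projection \eqref{eq:pi0-def} of $F_{B,\F}$ followed by $\ev_{1,-1}$, with $\kappa_{0,\F}(s_\varnothing^+) = 1$, evaluating both maps on $\sum_\blambda c_\blambda [e_\blambda^+] \cdot 1_\varnothing$ returns $c_\varnothing$ in each case.

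Composing the two pieces yields $\bF_0 \circ \alpha = \kappa_{0,\F} \circ \phi_B \circ \alpha = \kappa_{0,\F} \circ \alpha' \circ \varphi_B$, which is the asserted commutativity. I expect the main obstacle to lie in the right triangle: one must reconcile the two a priori unrelated notions of ``degree-zero projection''—$\bF_0$, coming from the center of the graded category $\cH_B^*$, and $\kappa_{0,\F}$, coming from the Fock-space grading on $F_{B,\F}$—by checking that $\phi_B$ intertwines the two gradings up to the overall scaling factor $\delta$, and that the normalizations and the $\F_{q,\pi} \to \F$ specialization match, so that both projections carry the vacuum to $1 \in \F$.
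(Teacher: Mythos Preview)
Your proposal is correct and follows essentially the same decomposition as the paper: the left square is reduced to Proposition~\ref{prop:Fock-space-isom}, and the right triangle is handled by showing that $\bF_0$ annihilates $[e_\blambda^+]\cdot 1_\varnothing$ for $|\blambda|>0$. The only cosmetic difference is that the paper checks this last vanishing by appealing directly to the functorial definition of $\bF_0$ from \cite[\S7.1]{RS15} (where ``the indices become negative''), whereas you use the equivalent degree-zero-projection description of $\bF_0$ together with the degree count \eqref{eq:degree-of-operator-on-center}; both routes amount to the same observation.
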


\begin{proof}
  The commutativity of the left-hand square follows from Proposition~\ref{prop:Fock-space-isom}.  To prove commutativity of the right-hand triangle, suppose $\lambda$ is a nonempty partition and $i \in \{1,\dotsc,\ell\}$.  Then it is immediate from the definition in \cite[\S7.1]{RS15} that $\bF_0([e_{\lambda,i}^+])=0$, since the indices as described in \cite[\S7.1]{RS15} become negative.  The result then follows immediately from Proposition~\ref{prop:Fock-space-isom}.
\end{proof}

It follows from Lemma~\ref{lem:proj0-equivariance} that $\varphi_B$ intertwines the linear functionals
\[
  \Tr(\cH_B) \to \F,\quad x \mapsto \bF_0(x \cdot 1_\varnothing),
  \quad \text{and} \quad
  \fh_B \to \F,\quad x \mapsto \kappa_0 \left( x \cdot 1_{\fh_B^+} \right).
\]
Define a pairing
\begin{equation} \label{eq:diagrammatic-pairing}
  \langle -, - \rangle_B \colon \Tr(\cH_B)^- \times \Tr(\cH_B)^+ \to \F,\quad
  \langle x, y \rangle_B = \bF_0 \big( (xy) \cdot 1_\varnothing \big).
\end{equation}
Diagrammatically, the pairing $\langle x, y \rangle_B$ is obtained by juxtaposing the diagrams for $x$ and $y$, closing off the resulting composite diagram to the right, and applying the functor $\bF_0$:
\[
  \langle x,y \rangle_B = \bF_0
  \left(
    \begin{tikzpicture}[anchorbase]
      \draw (-1,-0.25) -- (-1,0.25) -- (-0.5,0.25) -- (-0.5,-0.25) -- (-1,-0.25);
      \draw (-0.4,-0.25) -- (-0.4,0.25) -- (0.3,0.25) -- (0.3,-0.25) -- (-0.4,-0.25);
      \draw (-0.75,0) node {$x$};
      \draw (-0.05,0) node {$y$};
      \draw[<-] (-0.85,0.25) -- (-0.85,0.5) .. controls (-0.85,1.7) and (2.85,1.7) .. (2.85,0.5) -- (2.85,-0.5) .. controls (2.85,-1.7) and (-0.85,-1.7) .. (-0.85,-0.5) -- (-0.85,-0.25);
      \draw[<-] (-0.65,0.25) -- (-0.65,0.5) .. controls (-0.65,1.5) and (2.65,1.5) .. (2.65,0.5) -- (2.65,-0.5) .. controls (2.65,-1.5) and (-0.65,-1.5) .. (-0.65,-0.5) -- (-0.65,-0.25);
      \draw[->] (-0.25,0.25) -- (-0.25,0.5) .. controls (-0.25,1.1) and (2.25,1.1) .. (2.25,0.5) -- (2.25,-0.5) .. controls (2.25,-1.1) and (-0.25,-1.1) .. (-0.25,-0.5) -- (-0.25,-0.25);
      \draw[->] (-0.05,0.25) -- (-0.05,0.5) .. controls (-0.05,0.9) and (2.05,0.9) .. (2.05,0.5) -- (2.05,-0.5) .. controls (2.05,-0.9) and (-0.05,-0.9) .. (-0.05,-0.5) -- (-0.05,-0.25);
      \draw[->] (0.15,0.25) -- (0.15,0.4) .. controls (0.15,0.7) and (1.85,0.7) .. (1.85,0.4) -- (1.85,-0.4) .. controls (1.85,-0.7) and (0.15,-0.7) .. (0.15,-0.4) -- (0.15,-0.25);
    \end{tikzpicture}
  \right)\ .
\]

\begin{rem}
  In fact, the pairing \eqref{eq:diagrammatic-pairing} can be extended to a pairing for the larger trace $\Tr(\cH_B^*)$, using the same definition.  In this setting, it is more natural to consider the 2-category version of the Heisenberg category $\cH_B$ (see \cite[Rem.~6.1]{RS15}), in which case the definition of the trace of a 2-category implies that we only pair elements of $\Tr(\cH_B^*)^+$ of ascension $n$ with elements of $\Tr(\cH_B^*)^-$ of ascension $-n$ (so that their horizontal composition is the endomorphism of some object, as opposed to a morphism between distinct objects).  We will not investigate the properties of this extended bilinear form in the current paper.
\end{rem}

\begin{rem}
  One could modify the pairing \eqref{eq:diagrammatic-pairing} by replacing $\bF_0$ with the functor $\bF_n$ for any $n \in \N$ (see \cite[\S7]{RS15}).  In this case, the pairing would take values in the center $Z(B^{\rtimes n}\md^*)$ of the graded category $B^{\rtimes n}\md^*$, which can be naturally identified with the center $Z(B^{\rtimes n})$ of the wreath product algebra $B^{\rtimes n}$.  It would be interesting to further examine this family of pairings.
\end{rem}

\subsection{Isometries}

Recalling \eqref{eq:ev1}, we let
\[
  \langle -,- \rangle_{1,-1} = \ev_{1,-1} \circ \langle -,- \rangle
  \colon \fh_B^- \otimes \fh_B^+ \to \F
\]
denote the corresponding specialization of the pairing of $\fh_B^\pm$.

\begin{cor}
  The diagram
  \[
    \xymatrix{
      \Tr(\cH_B)^- \times \Tr(\cH_B)^+ \ar[d]_{\varphi_B \times \varphi_B} \ar[rr]^(0.65){\langle -, - \rangle_B} & & \F \\
      \fh_B^- \times \fh_B^+ \ar[urr]_{\langle -, - \rangle_{1,-1}} & &
    }
  \]
  commutes.
\end{cor}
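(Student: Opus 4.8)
The plan is to prove the corollary by unwinding the definition of $\langle -, - \rangle_B$ and then chaining together Lemma~\ref{lem:proj0-equivariance}, the multiplicativity of $\varphi_B$, and Lemma~\ref{lem:recover-pairing}; no genuinely new computation is needed. Fix $x \in \Tr(\cH_B)^-$ and $y \in \Tr(\cH_B)^+$, and set $f = \varphi_B(x) \in \fh_B^-$ and $g = \varphi_B(y) \in \fh_B^+$, using the identification $\varphi_B(\Tr(\cH_B)^\pm) = \fh_B^\pm$ recorded after Theorem~\ref{theo:trace-identification}. By the definition \eqref{eq:diagrammatic-pairing}, what must be shown is
\[
  \bF_0\big((xy) \cdot 1_\varnothing\big) = \langle f, g \rangle_{1,-1}.
\]

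First I would apply the commutative diagram of Lemma~\ref{lem:proj0-equivariance} to the single element $xy \in \Tr(\cH_B)$. Commutativity of that diagram says precisely that $\varphi_B$ intertwines the functional $z \mapsto \bF_0(z \cdot 1_\varnothing)$ on $\Tr(\cH_B)$ with the functional $z \mapsto \kappa_{0,\F}\big(\varphi_B(z) \cdot 1_{\fh_B^+}\big)$, so that
\[
  \bF_0\big((xy) \cdot 1_\varnothing\big) = \kappa_{0,\F}\big(\varphi_B(xy) \cdot 1_{\fh_B^+}\big) = \ev_{1,-1}\Big( \kappa_0\big( \varphi_B(xy) \cdot 1_{\fh_B^+} \big) \Big),
\]
where the second equality is the definition $\kappa_{0,\F} = \ev_{1,-1} \circ \kappa_0$ from the statement of Lemma~\ref{lem:proj0-equivariance}. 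It is important here that Lemma~\ref{lem:proj0-equivariance} is stated for all of $\Tr(\cH_B)$, so that it applies directly to the product $xy$ rather than only to the two factors separately.

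Next I would invoke that $\varphi_B$ is an isomorphism of $\F_{q,\pi}$-algebras (Theorem~\ref{theo:trace-identification}), hence a ring homomorphism, to write $\varphi_B(xy) = \varphi_B(x)\varphi_B(y) = fg$. Lemma~\ref{lem:recover-pairing} then gives $\kappa_0\big((fg) \cdot 1_{\fh_B^+}\big) = \langle f, g \rangle$, and applying $\ev_{1,-1}$ produces $\langle f, g \rangle_{1,-1}$ by the definition of the specialized pairing, completing the identification. The main point requiring care is purely bookkeeping: one must confirm that the scalar specialization $\ev_{1,-1}$ is already built into the target of Lemma~\ref{lem:proj0-equivariance} through the Fock space $F_{B,\F} = F_B \otimes_{\F_{q,\pi}} \F$ and the map $\kappa_{0,\F}$, so that the output lands in $\F$ via $\langle -, - \rangle_{1,-1}$ and not in $\F_{q,\pi}$. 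There is no substantive obstacle beyond this assembly of earlier results.
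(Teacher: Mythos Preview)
Your proposal is correct and follows exactly the route the paper takes: the paper's proof simply reads ``This follows directly from Lemmas~\ref{lem:recover-pairing} and~\ref{lem:proj0-equivariance},'' and your argument is a careful unpacking of precisely that, using the multiplicativity of $\varphi_B$ to pass from $xy$ to $fg$ and then invoking the two lemmas in turn.
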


\begin{proof}
  This follows directly from Lemmas~\ref{lem:recover-pairing} and~\ref{lem:proj0-equivariance}.
\end{proof}

For each $n \in \N$, the adjunctions in the category $\cH_B$ give us an automorphism of $\F_{q,\pi}$-algebras
\[
  \End_{\cH_B}(\sQ_+^n)^\op \to \End_{\cH_B}(\sQ_-^n),\quad x \mapsto x^\rot,
\]
given by rotating a diagram through an angle $\pi$.  In this way we can define a bilinear form
\begin{equation} \label{eq:diagrammatic-form}
  \langle -, - \rangle_B \colon \Tr(\cH_B)^+ \times \Tr(\cH_B)^+ \to \F,\quad \langle x, y \rangle_B
  = \langle x^\rot, y \rangle_B
  = \bF_0\big( (x^\rot y) \cdot 1_\varnothing \big).
\end{equation}

\begin{theo} \label{theo:Jack-diagrammatic}
  Suppose $B_0 = \F$, so that $\fh_B^+ = \Sym$.  Then the algebra isomorphism $\varphi_B \colon \Tr(\cH_B)^+ \to \fh_B^+ = \Sym$ is an isometry, where $\Tr(\cH_B)^+$ is equipped with the bilinear form \eqref{eq:diagrammatic-form} and $\Sym$ is equipped with the Jack bilinear form at parameter $\dim B_\even - \dim B_\odd$ (see \eqref{eq:Jack-pairing}).
\end{theo}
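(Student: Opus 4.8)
The plan is to prove the isometry by evaluating both forms on the power-sum basis. Since $B_0 = \F$, the algebra $B$ is local, so it has a single indecomposable projective (namely $B$ itself), giving $\ell = 1$ and a one-element index set $I = \{1\}$ with $e_1 = 1_B$; in particular $\fh_B^+ = \Sym$. By Proposition~\ref{prop:Pp-isom} we have $\varphi_B(P_\lambda^+) = p_\lambda$ (dropping the now-redundant index), and since $\varphi_B$ is a linear isomorphism and $\{p_\lambda\}_{\lambda \in \partition}$ is a basis of $\Sym$, the classes $\{P_\lambda^+\}$ form a basis of $\Tr(\cH_B)^+$. As both \eqref{eq:diagrammatic-form} and the Jack form are bilinear, it therefore suffices to show $\langle P_\lambda^+, P_\mu^+ \rangle_B = \langle p_\lambda, p_\mu \rangle_k$ for all $\lambda, \mu \in \partition$.

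The next step is to transport the left-hand form into the pairing already computed in the corollary preceding the theorem. I would first establish that rotation carries power sums to power sums, namely $(P_\lambda^+)^\rot = P_\lambda^-$. Diagrammatically, $P_\lambda^+$ is the class of a cycle-type-$\lambda$ permutation of upward strands (with trivial dots, as $e_1 = 1$); rotating it through angle $\pi$ reverses orientations and replaces the underlying permutation by one conjugate to its inverse under the order-reversing involution, hence again of cycle type $\lambda$, so its class in $\Tr(\cH_B)^-$ is $P_\lambda^-$. Granting this, and combining the corollary (which asserts that $\varphi_B$ intertwines $\langle -, - \rangle_B$ with $\langle -, - \rangle_{1,-1}$) with $\varphi_B(P_\lambda^-) = p_\lambda^-$ from Proposition~\ref{prop:Pp-isom}, I obtain
\[
  \langle P_\lambda^+, P_\mu^+ \rangle_B
  = \langle (P_\lambda^+)^\rot, P_\mu^+ \rangle_B
  = \langle P_\lambda^-, P_\mu^+ \rangle_B
  = \langle p_\lambda^-, p_\mu^+ \rangle_{1,-1}.
\]

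It remains to evaluate this specialized lattice pairing. Here $L = L_B$ is the rank-one lattice with $d := \langle 1, 1 \rangle = \grdim \HOM_B(B,B) = \grdim B$ by \eqref{eq:K0-HOM-pairing}, so the rank-one formula \eqref{eq:full-pairing-determined-by-V} of Remark~\ref{rem:rank-one-pairing} gives $\langle p_\lambda^-, p_\mu^+ \rangle = \delta_{\lambda,\mu} z_\lambda \prod_{j=1}^{\ell(\lambda)} \theta_{\lambda_j}(d)$. Writing $\grdim B = \sum_{m,\epsilon} (\dim B_{m,\epsilon})\, q^m \pi^\epsilon$ and using $\theta_n(q) = q^n$, $\theta_n(\pi) = -(-\pi)^n$, a direct check shows $\ev_{1,-1}\big(\theta_n(q^m \pi^\epsilon)\big) = (-1)^\epsilon$, independent of $n$ and $m$. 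Hence $\ev_{1,-1}(\theta_n(\grdim B)) = \dim B_\even - \dim B_\odd = k$ for every $n$, and therefore
\[
  \langle p_\lambda^-, p_\mu^+ \rangle_{1,-1} = \delta_{\lambda,\mu}\, z_\lambda\, k^{\ell(\lambda)},
\]
which is the Jack pairing at parameter $k$ used to define the Jack symmetric functions (see~\eqref{eq:Jack-pairing}). This yields the desired isometry.

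The main obstacle is the rotation identity $(P_\lambda^+)^\rot = P_\lambda^-$: one must check carefully, at the diagrammatic level, that rotating the planar representative of a cycle-type-$\lambda$ permutation through angle $\pi$ and reorienting via the adjunction cups and caps returns a diagram whose trace class is again of cycle type $\lambda$, tracking any signs arising from super-isotopy and confirming that the standing hypotheses (even supersymmetric trace, trivial Nakayama automorphism) introduce no extra scalar or twist. Once this is in hand, the reduction via the corollary and the evaluation $\ev_{1,-1}(\theta_n(\grdim B)) = k$ are routine.
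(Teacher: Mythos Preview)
Your proof is correct and follows essentially the same route as the paper: reduce via the corollary to the specialized lattice pairing $\langle -,- \rangle_{1,-1}$, use the rotation identity $(P_n^+)^\rot = P_n^-$, and evaluate on power sums. The only difference is that the paper verifies the match on the generators $p_n$ and then invokes the fact that a Hopf pairing on $\Sym$ is determined by its values on power sums, whereas you compute directly on the full basis $\{p_\lambda\}$, obtaining the explicit value $\delta_{\lambda,\mu}\, z_\lambda\, k^{\ell(\lambda)}$; your version is slightly more self-contained, while the paper's avoids needing $(P_\lambda^+)^\rot = P_\lambda^-$ for general $\lambda$ (though this follows from the single-cycle case since rotation is an anti-homomorphism and $\Tr(\cH_B)^-$ is commutative).
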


\begin{proof}
  Since $B_0 = \F$, the algebra $B$ has only one simple module (up to grading and parity shift), namely the one-dimensional module $B/\bigoplus_{n>0} B_n$.  The projective cover of this simple module is $B$ itself.  Then we have $\HOM_B(B,B) \cong B$, as $\Z_{q,\pi}$-modules.  For $n \in \N$, we have $(P_n^+)^\rot = P_n^-$, and
  \[
    \langle p_n^-, p_m^+ \rangle
    = \delta_{n,m} n \theta_n ( \grdim B )
    \implies \langle p_n^+, p_m^+ \rangle_{1,-1}
    = \delta_{n,m} n (\dim B_\even - \dim B_\odd),
  \]
  which coincides with the Jack bilinear form at parameter $\dim B_\even - \dim B_\odd$.  Since any Hopf pairing on $\Sym$ is uniquely determined by its value on the power sums, the proposition follows.
\end{proof}

\begin{rem}
  If we fix $k \in \N_+$, then the algebra $B = \F[x]/(x^k)$ is naturally a $\Z$-graded Frobenius algebra, where we declare $x$ to be even of $\Z$-degree one.  Since $B$ has only one indecomposable projective module, we have $\Tr(\cH_B)^\pm \cong \Sym$.  The pairing \eqref{eq:diagrammatic-pairing} then corresponds to the Jack pairing described in \eqref{eq:Jack-pairing} at parameter $k$.

  On the other hand, if we let $B$ be the cohomology of a surface of genus $g$, then $B$ is a supercommutative Frobenius algebra with $\dim B_\even = 2$ and $\dim B_\odd = 2g$.  Thus the pairing \eqref{eq:diagrammatic-pairing} corresponds to the Jack pairing described in \eqref{eq:Jack-pairing} at parameter $2 - 2g$, allowing us to realize the Jack pairing at negative parameters.
\end{rem}

\begin{eg}
  Suppose $B = \F[x]/(x^k)$, with trace map given by $\tr_B ( \sum_{j=1}^{k-1} a_j x^j ) = a_{k-1}$.  The first power sum $p_1^+$ corresponds to the clockwise circle $P_1^+$, and $\left( P_1^+ \right)^\rot = P_1^-$.  We compute
  \begin{multline*}
    \langle P_1^-, P_1^+ \rangle_B
    =
    \begin{tikzpicture}[anchorbase]
      \draw [<-] (0,0) arc (-180:180:0.3);
      \draw [->] (-0.3,0) arc (-180:180:0.6);
    \end{tikzpicture}
    \ \stackrel{\eqref{rel:down-up-double-cross}}{=}\
    \begin{tikzpicture}[anchorbase]
      \draw [->] (0,0) arc (-180:180:0.5);
      \draw [<-] (0.5,0) arc (-180:180:0.5);
    \end{tikzpicture}
    \ + \sum_{j=0}^{k-1}\
    \begin{tikzpicture}[anchorbase]
      \draw[<-] (0,-0.3) -- (0,0) arc (180:0:0.6) arc (360:180:0.2) arc (0:180:0.2) -- (0.4,-0.6) arc (180:360:0.2) arc (180:0:0.2) arc (360:180:0.6) -- (0,-0.3);
      \bluedot{(1.2,0)} node[color=black,anchor=west] {$x^{k-j-1}$};
      \bluedot{(0.8,-0.6)};
      \draw (0.63,-0.4) node {$x^j$};
    \end{tikzpicture}
    \\
    \stackrel{\eqref{rel:up-up-double-cross}}{=}\
    \begin{tikzpicture}[anchorbase]
      \draw [->] (0,0) arc (-180:180:0.5);
    \end{tikzpicture}
    \
    \begin{tikzpicture}[anchorbase]
      \draw [<-] (0,0) arc (-180:180:0.5);
    \end{tikzpicture}
    \ + \sum_{j=0}^{k-1}\
    \begin{tikzpicture}[anchorbase]
      \draw [->] (1,0) arc (0:360:0.5);
      \bluedot{(0,0)} node[color=black,anchor=east] {$x^{k-1}$};
    \end{tikzpicture}
    \ \stackrel{\eqref{rel:ccc}}{=} k.
  \end{multline*}
  This agrees with the inner product of $p_1$ with itself in the Jack inner product at Jack parameter $k = \dim B$.
\end{eg}

\begin{rem}
  The Heisenberg algebra $\fh_B$ is a Heisenberg double, whose definition involves a Hopf pairing between two Hopf algebras.  This pairing is thus implicit in the algebra structure of $\fh_B$, which was realized via Grothendieck group categorification in \cite{RS15} and via trace categorification in Theorem~\ref{theo:trace-identification}.  In particular, for the choice $B = \F[x]/(x^k)$, the pairing in $\fh_B$ is the specialization \eqref{eq:specialized-Macdonald-pairing} of the Macdonald pairing.  The inclusion of $\Tr(\cH_B)$ into $\Tr(\cH_B^*)$, which collapses the grading (see Proposition~\ref{prop:trace-injection}), corresponds to the $q \to 1$ limit that yields the Jack pairing from the Macdonald pairing.
\end{rem}

%
\section{Filtrations and associated graded algebras} \label{sec:filtrations}
%

We conclude the paper with a discussion of a natural filtration on the center $Z(\cH_B^*)$ arising from the ascension grading on the trace $\Tr(\cH_B^*)$.

By Proposition~\ref{prop:wreath-closures-span}, we have an $\F$-vector space filtration of $Z(\cH_B^*)$ given by
\begin{equation} \label{eq:filtration}
  0 \subseteq Z_0 \subseteq Z_1 \subseteq Z_2 \subseteq \dotsb,\qquad
  \text{where } Z_n = \sum_{m=0}^n [(B^{\rtimes m})^\op] \cdot 1_\varnothing.
\end{equation}
We then have the associated graded $\F$-vector space
\begin{equation} \label{eq:center-associated-graded}
  \gr Z(\cH_B^*) = \bigoplus_{n=0}^\infty Z_n/Z_{n-1},
\end{equation}
where we adopt the convention that $Z_{-1}=0$.  For $a \in Z(\cH_B^*)$, we let $\gr (a)$ denote its image under the usual $\F$-linear map from $Z(\cH_B^*)$ to $\gr Z(\cH_B^*)$.

The center $Z(\cH_B^*)$ is naturally an $\F$-algebra, with multiplication given by juxtaposition of diagrams.  The following results gives a precise relationship between the multiplicative structure on $\Tr(\cH_B^*)$ and the multiplicative structure on $Z(\cH_B^*)$.  Throughout this section, in order to simplify diagrams, we will draw a single arc to represent multiple strands.

\begin{prop} \label{prop:encircle-versus-juxtapose}
  If $n_1,n_2 \in \N_+$, $z_1 \in A_{n_1}$, and $z_2 \in A_{n_2}$, then
  \[
    \gr \left(
    \begin{tikzpicture}[anchorbase]
      \draw (-1.7,0) -- (-1.7,0.4) -- (-0.7,0.4) -- (-0.7,0) -- (-1.7,0);
      \draw (-1.2,0.2) node {$z_1$};
      \draw[->] (-1.2,0.4) .. controls (-1.2,2) and (1.4,2) .. (1.4,0) .. controls (1.4,-2) and (-1.2,-2) .. (-1.2,0);
      \draw (-0.5,-0.4) -- (-0.5,0) -- (0.5,0) -- (0.5,-0.4) -- (-0.5,-0.4);
      \draw (0,-0.2) node {$z_2$};
      \draw[->] (0,0) .. controls (0,0.8) and (1,0.8) .. (1,-0.2) .. controls (1,-1) and (0,-1) .. (0,-0.4);
    \end{tikzpicture}
    \right) = \gr \left(
    \begin{tikzpicture}[anchorbase]
      \draw (-0.5,-0.2) -- (-0.5,0.2) -- (0.5,0.2) -- (0.5,-0.2) -- (-0.5,-0.2);
      \draw (0,0) node {$z_1$};
      \draw[->] (0,0.2) .. controls (0,1) and (1,1) .. (1,0) .. controls (1,-1) and (0,-1) .. (0,-0.2);
    \end{tikzpicture}
    \
    \begin{tikzpicture}[>=stealth,baseline={([yshift=2ex]current bounding box.center)}]
      \draw (-0.5,-0.2) -- (-0.5,0.2) -- (0.5,0.2) -- (0.5,-0.2) -- (-0.5,-0.2);
      \draw (0,0) node {$z_2$};
      \draw[->] (0,0.2) .. controls (0,1) and (1,1) .. (1,0) .. controls (1,-1) and (0,-1) .. (0,-0.2);
    \end{tikzpicture}
    \right).
  \]
\end{prop}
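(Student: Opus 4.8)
The plan is to show that the two closed diagrams in the statement agree modulo $Z_{n_1+n_2-1}$, so that they have the same image in $Z_{n_1+n_2}/Z_{n_1+n_2-1} \subseteq \gr Z(\cH_B^*)$. Write $D_2 = [z_2] \cdot 1_\varnothing$ for the inner closed diagram. The left-hand side is the annular action of $[z_1]$ on $D_2$ (the loop coming from $z_1$ encircles $D_2$), while the right-hand side is the product $([z_1] \cdot 1_\varnothing) \cdot D_2$ in $Z(\cH_B^*)$, obtained by placing the two closures side by side. The right-hand side is immediately seen to lie in $Z_{n_1+n_2}$: juxtaposing the closures of $z_1 \in (B^{\rtimes n_1})^\op$ and $z_2 \in (B^{\rtimes n_2})^\op$ is the closure $[w] \cdot 1_\varnothing$ of the block element $w \in (B^{\rtimes (n_1+n_2)})^\op$ obtained from $z_1$ and $z_2$ under the evident embedding.

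First I would isotope the left-hand diagram by sliding the inner closed diagram $D_2$ radially outward, past the $n_1$ strands of the loop coming from $z_1$, until the two closures sit side by side. As $D_2$ crosses these strands one resolves the resulting double crossings using the local relations \eqref{rel:up-up-double-cross}, \eqref{rel:down-up-double-cross}, and \eqref{rel-up-down-double-cross}. The relations \eqref{rel:up-up-double-cross} and \eqref{rel-up-down-double-cross} equate a double crossing with the identity on the two strands, so they contribute no correction; applying only these resolutions pulls $D_2$ out cleanly and yields precisely the side-by-side diagram, namely the right-hand side. The sole source of correction terms is \eqref{rel:down-up-double-cross}, whose right-hand side is the identity minus a sum over the basis $\cB$ of cap-cup turnbacks carrying dots $b$ and $b^\vee$.

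It then remains to check that each such correction lies in $Z_{n_1+n_2-1}$. A turnback replaces a pair of strands running through a crossing by a cap above and a cup below, decreasing the number of through-strands; clearing any resulting right curls via the reduction in the proof of Proposition~\ref{prop:wreath-closures-span} rewrites the correction as a linear combination of closures $[z'] \cdot 1_\varnothing$ with $z' \in (B^{\rtimes m})^\op$ and $m < n_1+n_2$. Hence every correction lies in $Z_{n_1+n_2-1}$, so the left-hand side equals the right-hand side modulo $Z_{n_1+n_2-1}$; in particular the left-hand side also lies in $Z_{n_1+n_2}$, and the two $\gr$-classes coincide.

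I expect the main obstacle to be the bookkeeping of this last step: one must verify that every turnback correction genuinely drops the filtration degree, i.e.\ that after clearing curls it is expressible using strictly fewer than $n_1+n_2$ strands. This rests on a careful orientation analysis identifying which crossings fall under the clean relations \eqref{rel:up-up-double-cross}, \eqref{rel-up-down-double-cross} rather than the corrected relation \eqref{rel:down-up-double-cross}, together with the dot-collision bookkeeping of \eqref{eq:collision-up}--\eqref{eq:collision-down}; this is routine but must be done with care so that no top-degree correction is overlooked.
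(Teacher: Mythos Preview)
Your plan is essentially the paper's: pass one closure through the other and note that the only nontrivial corrections come from \eqref{rel:down-up-double-cross} and land in $Z_{n_1+n_2-1}$. The one step you leave implicit is how the $z_1$ strands get past the \emph{box} $z_2$, not just past the plain closure arcs of $D_2$; double-crossing relations alone do not do this. The paper handles it by writing $z_2 = \mathbf{c}\, z_3$ with $\mathbf{c} \in B^{\otimes n_2}$ and $z_3 \in \F S_{n_2}$: first the dots $\mathbf{c}$ are slid out via \eqref{rel:dotslide-up-right}--\eqref{rel:dotslide-up-left} (producing a sign $(-1)^{|\mathbf{c}||z_1|}$ that cancels at the end), and then the $z_1$ strands are pulled through the pure-permutation box $z_3$ using the triple-point moves for arbitrary orientations from \cite[Lem.~8.2]{RS15}. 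Only after this are the remaining down-up double crossings resolved with \eqref{rel:down-up-double-cross}. With that factorisation the correction terms are visibly closures of elements of $(B^{\rtimes m})^\op$ with $m<n_1+n_2$ and carry no right curls, so your detour through Proposition~\ref{prop:wreath-closures-span} is unnecessary.
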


\begin{proof}
  It suffices to consider $z_2$ of the form $\bc z_3$ for some $\bc \in B^{\otimes n_2}$ and $z_3 \in \F S_{n_2}$.  Using \eqref{rel:dotslide-up-right}, \eqref{rel:dotslide-up-left} and \eqref{rel:up-up-double-cross} we have
  \[
    \begin{tikzpicture}[anchorbase]
      \draw (-1.7,0) -- (-1.7,0.4) -- (-0.7,0.4) -- (-0.7,0) -- (-1.7,0);
      \draw (-1.2,0.2) node {$z_1$};
      \draw[->] (-1.2,0.4) .. controls (-1.2,2) and (1.4,2) .. (1.4,0) .. controls (1.4,-2) and (-1.2,-2) .. (-1.2,0);
      \draw (-0.5,-0.4) -- (-0.5,0) -- (0.5,0) -- (0.5,-0.4) -- (-0.5,-0.4);
      \draw (0,-0.2) node {$z_2$};
      \draw[->] (0,0) .. controls (0,0.8) and (1,0.8) .. (1,-0.2) .. controls (1,-1) and (0,-1) .. (0,-0.4);
    \end{tikzpicture}
    \ =\
    \begin{tikzpicture}[anchorbase]
      \draw (-1.7,0) -- (-1.7,0.4) -- (-0.7,0.4) -- (-0.7,0) -- (-1.7,0);
      \draw (-1.2,0.2) node {$z_1$};
      \draw[->] (-1.2,0.4) .. controls (-1.2,2) and (1.1,2) .. (1.1,0) .. controls (1.1,-2) and (-1.2,-2) .. (-1.2,0);
      \draw (-0.5,-0.4) -- (-0.5,0) -- (0.5,0) -- (0.5,-0.4) -- (-0.5,-0.4);
      \draw (0,-0.2) node {$z_2$};
      \draw[->] (0,0) .. controls (0,1.5) and (2,1.5) .. (2,-0.2) .. controls (2,-1.5) and (0,-1.5) .. (0,-0.4);
    \end{tikzpicture}
    \ = (-1)^{|\bc| |z_1|}\
    \begin{tikzpicture}[anchorbase]
      \draw (-1.7,0) -- (-1.7,0.4) -- (-0.7,0.4) -- (-0.7,0) -- (-1.7,0);
      \draw (-1.2,0.2) node {$z_1$};
      \draw[->] (-1.2,0.4) .. controls (-1.2,2) and (1.1,2) .. (1.1,0) .. controls (1.1,-2) and (-1.2,-2) .. (-1.2,0);
      \draw (-0.5,-0.4) -- (-0.5,0) -- (0.5,0) -- (0.5,-0.4) -- (-0.5,-0.4);
      \draw (0,-0.2) node {$z_3$};
      \draw[->] (0,0) .. controls (0,1.5) and (2,1.5) .. (2,-0.2) .. controls (2,-1.5) and (0,-1.5) .. (0,-0.4);
      \bluedot{(1.5,0.92)} node[anchor=south, color=black] {\bf{c}};
    \end{tikzpicture}
    \ \ .
  \]
  Now, by \cite[Lem.~8.2]{RS15} we can use triple point moves for strands of any possible orientation.  Thus, we can pull all the strands emanating from the box labeled $z_1$ through the box labeled $z_3$:
  \[
    \begin{tikzpicture}[anchorbase]
      \draw (-1.7,0) -- (-1.7,0.4) -- (-0.7,0.4) -- (-0.7,0) -- (-1.7,0);
      \draw (-1.2,0.2) node {$z_1$};
      \draw[->] (-1.2,0.4) .. controls (-1.2,2) and (1.1,2) .. (1.1,0) .. controls (1.1,-2) and (-1.2,-2) .. (-1.2,0);
      \draw (-0.5,-0.4) -- (-0.5,0) -- (0.5,0) -- (0.5,-0.4) -- (-0.5,-0.4);
      \draw (0,-0.2) node {$z_3$};
      \draw[->] (0,0) .. controls (0,1.5) and (2,1.5) .. (2,-0.2) .. controls (2,-1.5) and (0,-1.5) .. (0,-0.4);
      \bluedot{(1.5,0.92)} node[anchor=south, color=black] {\bf{c}};
    \end{tikzpicture}
    \ =\
    \begin{tikzpicture}[anchorbase]
      \draw (-1.7,0.2) -- (-1.7,0.6) -- (-0.7,0.6) -- (-0.7,0.2) -- (-1.7,0.2);
      \draw (-1.2,0.4) node {$z_1$};
      \draw (-1.2,0.6) .. controls (-1.2,1) and (-0.8,1.2) .. (0,1.2) .. controls (0.2,1.2) and (0.7,1) .. (0.7,0.7).. controls (0.7, 0.4) and (0,0.4) .. (0,0.2);
      \draw[<-] (-1.2,0.2) .. controls (-1.2,-0.5) and (0,-0.5) .. (0,0.2);
      \draw (0.5,-0.2) -- (0.5,0.2) -- (1.5,0.2) -- (1.5,-0.2) -- (0.5,-0.2);
      \draw (1,0) node {$z_3$};
      \draw(2.2,0.2) .. controls (2.2,1) and (1.8,1.2) .. (1,1.2) .. controls (0.8,1.2) and (0.3,1) .. (0.3,0.7).. controls (0.3, 0.4) and (1,0.4) .. (1,0.2);
      \draw[->] (2.2,-0.2) .. controls (2.2,-1) and (1,-1) .. (1,-0.2);
      \draw (2.2,-0.2) -- (2.2,0.2);
      \bluedot{(0.9,1.2)} node[anchor=south, color=black] {\bf{c}};
    \end{tikzpicture}
  \]
  We can then use \eqref{rel:down-up-double-cross} to pull apart the double crossings one by one, introducing terms with dots and fewer strands:
  \[
    \begin{tikzpicture}[anchorbase]
      \draw (-1.7,0.2) -- (-1.7,0.6) -- (-0.7,0.6) -- (-0.7,0.2) -- (-1.7,0.2);
      \draw (-1.2,0.4) node {$z_1$};
      \draw (-1.2,0.6) .. controls (-1.2,1) and (-0.8,1.2) .. (0,1.2) .. controls (0.2,1.2) and (0.7,1) .. (0.7,0.7).. controls (0.7, 0.4) and (0,0.4) .. (0,0.2);
      \draw[<-] (-1.2,0.2) .. controls (-1.2,-0.5) and (0,-0.5) .. (0,0.2);
      \draw (0.5,-0.2) -- (0.5,0.2) -- (1.5,0.2) -- (1.5,-0.2) -- (0.5,-0.2);
      \draw (1,0) node {$z_3$};
      \draw(2.2,0.2) .. controls (2.2,1) and (1.8,1.2) .. (1,1.2) .. controls (0.8,1.2) and (0.3,1) .. (0.3,0.7).. controls (0.3, 0.4) and (1,0.4) .. (1,0.2);
      \draw[->] (2.2,-0.2) .. controls (2.2,-1) and (1,-1) .. (1,-0.2);
      \draw (2.2,-0.2) -- (2.2,0.2);
      \bluedot{(0.9,1.2)} node[anchor=south, color=black] {\bf{c}};
    \end{tikzpicture}
    \ \equiv\
    \begin{tikzpicture}[anchorbase]
      \draw (-0.5,-0.2) -- (-0.5,0.2) -- (0.5,0.2) -- (0.5,-0.2) -- (-0.5,-0.2);
      \draw (0,0) node {$z_1$};
      \draw[->] (0,0.2) .. controls (0,1) and (1,1) .. (1,0) .. controls (1,-1) and (0,-1) .. (0,-0.2);
    \end{tikzpicture}
    \
    \begin{tikzpicture}[>=stealth,baseline={([yshift=1ex]current bounding box.center)}]
      \draw (-0.5,-0.2) -- (-0.5,0.2) -- (0.5,0.2) -- (0.5,-0.2) -- (-0.5,-0.2);
      \draw (0,0) node {$z_3$};
      \draw[->] (0,0.2) .. controls (0,1.2) and (1,1.2) .. (1,0) .. controls (1,-1) and (0,-1) .. (0,-0.2);
      \bluedot{(0.5,0.92)} node[anchor=south, color=black] {\bf{c}};
    \end{tikzpicture}
    \ =(-1)^{|\bc||z_1|}\
    \begin{tikzpicture}[anchorbase]
      \draw (-0.5,-0.2) -- (-0.5,0.2) -- (0.5,0.2) -- (0.5,-0.2) -- (-0.5,-0.2);
      \draw (0,0) node {$z_1$};
      \draw[->] (0,0.2) .. controls (0,1) and (1,1) .. (1,0) .. controls (1,-1) and (0,-1) .. (0,-0.2);
    \end{tikzpicture}
    \
    \begin{tikzpicture}[>=stealth,baseline={([yshift=2ex]current bounding box.center)}]
      \draw (-0.5,-0.2) -- (-0.5,0.2) -- (0.5,0.2) -- (0.5,-0.2) -- (-0.5,-0.2);
      \draw (0,0) node {$z_2$};
      \draw[->] (0,0.2) .. controls (0,1) and (1,1) .. (1,0) .. controls (1,-1) and (0,-1) .. (0,-0.2);
    \end{tikzpicture}
  \]
  Here we used the symbol $\equiv$ for diagrams to indicate that they have the same image in the graded vector space.
\end{proof}

\begin{cor} \label{cor:center-filtration}
  The filtration \eqref{eq:filtration} is a filtration of $\F$-algebras and hence \eqref{eq:center-associated-graded} is the associated graded algebra.
\end{cor}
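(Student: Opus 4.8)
The plan is to prove the multiplicativity of the filtration directly, namely that $Z_{n_1} \cdot Z_{n_2} \subseteq Z_{n_1+n_2}$ for all $n_1,n_2 \in \N$, where the product is the juxtaposition multiplication of $Z(\cH_B^*)$. The concluding assertion that \eqref{eq:center-associated-graded} is the associated graded algebra is then the standard formal consequence: a multiplicative, exhaustive filtration with unit $1_\varnothing \in Z_0$ induces a well-defined multiplication $Z_i/Z_{i-1} \otimes Z_j/Z_{j-1} \to Z_{i+j}/Z_{i+j-1}$ on $\gr Z(\cH_B^*)$, making it a graded $\F$-algebra.

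By the definition of $Z_n$ in \eqref{eq:filtration} and bilinearity, it suffices to treat the product of two generators $a = [z_1] \cdot 1_\varnothing$ and $b = [z_2] \cdot 1_\varnothing$ with $z_1 \in (B^{\rtimes m_1})^\op$, $z_2 \in (B^{\rtimes m_2})^\op$, and $m_1 \le n_1$, $m_2 \le n_2$. When $m_1 = 0$ or $m_2 = 0$ one of the factors is a scalar multiple of $1_\varnothing \in Z_0$ and the inclusion is immediate, so I may assume $m_1,m_2 \in \N_+$. With this notation the juxtaposition product $ab$ is exactly the right-hand diagram of Proposition~\ref{prop:encircle-versus-juxtapose}.

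The key observation is that the left-hand (nested) diagram of Proposition~\ref{prop:encircle-versus-juxtapose} lies in $Z_{m_1+m_2}$. Indeed, placing the rightward closure of $z_2$ inside the rightward closure of $z_1$ yields, after a planar isotopy in which no strand of $z_1$ crosses a strand of $z_2$, precisely the rightward closure of the single box $z_1 \otimes z_2 \in (B^{\rtimes(m_1+m_2)})^\op$, the $m_1$ strands coming from $z_1$ closing at the outer radii and the $m_2$ strands coming from $z_2$ at the inner radii. Thus the nested diagram equals $[z_1 \otimes z_2]\cdot 1_\varnothing \in Z_{m_1+m_2}$. On the other hand, reading Proposition~\ref{prop:encircle-versus-juxtapose} through its proof, each application of \eqref{rel:down-up-double-cross} used to pull apart the double crossings contributes only diagrams with strictly fewer strands; hence $ab$ and the nested diagram differ by an element of $Z_{m_1+m_2-1}$. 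Combining, $ab \in Z_{m_1+m_2} \subseteq Z_{n_1+n_2}$, which is what we wanted.

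The step I expect to be the main obstacle is making ``differ by lower-order terms'' precise at the level of the filtration rather than merely in the associated graded vector space: one must check that the correction diagrams produced by \eqref{rel:down-up-double-cross} can be rewritten, via Proposition~\ref{prop:wreath-closures-span}, as closures of boxes on at most $m_1+m_2-1$ strands, and in particular that no curl-removal step forces the strand count back up to $m_1+m_2$ or beyond. This is precisely the content of interpreting Proposition~\ref{prop:encircle-versus-juxtapose} as an equality of symbols in $\gr Z(\cH_B^*)$; granting it, the identification of the nested diagram with a closure of $z_1 \otimes z_2$ completes the argument, and the remaining ``hence'' is formal.
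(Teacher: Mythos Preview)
Your argument is correct and is essentially the paper's proof unpacked: the paper phrases it as ``the ascension filtration on $\Tr(\cH_B^*)$ maps to the filtration $\{Z_n\}$ under action on $1_\varnothing$, and then Proposition~\ref{prop:encircle-versus-juxtapose} finishes'', which amounts to your observation that the nested diagram is $[z_1 \otimes z_2]\cdot 1_\varnothing \in Z_{m_1+m_2}$ together with Proposition~\ref{prop:encircle-versus-juxtapose}. Your worry about curl-removal pushing the strand count back up is unfounded: the only local relation used in the proof of Proposition~\ref{prop:encircle-versus-juxtapose} to produce correction terms is \eqref{rel:down-up-double-cross}, which introduces dots but no right curls, so each correction term is already the rightward closure of an element of $(B^{\rtimes m})^\op$ with $m<m_1+m_2$ and hence lies in $Z_{m_1+m_2-1}$.
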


\begin{proof}
  Consider the filtration on $\Tr(\cH_B^*)$ induced by the ascension grading.  Then the action of $\Tr(\cH_B^*)$ on $1_\varnothing$ sends the $n$-th step of the filtration on $\Tr(\cH_B^*)$ to $Z_n$.  The corollary then follows from Proposition~\ref{prop:encircle-versus-juxtapose}.
\end{proof}

\begin{prop} \label{prop:Pn-dotted-circle}
  For $n \in \N_+$ and $b \in B$ we have
  \[
    \gr \left( \left( \left( b \otimes 1_B^{\otimes (n-1)} \right) \circ P_n^+ \right) \cdot 1_\varnothing \right)
    = \gr \left(
    \begin{tikzpicture}[anchorbase]
      \draw[<-] (0,0) arc (90:470:.5);
      \redcircle{(0.45,-0.7)} node[color=black,anchor=west] {$n-1$};
      \bluedot{(-0.45,-0.3)} node[color=black,anchor=east] {$b$};
    \end{tikzpicture}
    \right).
  \]
  (Here $\circ$ denotes the composition in $\END_{\cH_B}(\sQ_+^n)$.)
\end{prop}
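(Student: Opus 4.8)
The plan is to prove the identity by induction on $n$, transforming the closed diagram on the left into the decorated circle on the right modulo the filtration piece $Z_{n-1}$ of \eqref{eq:filtration}. Both sides lie in $Z_n$: the left-hand side is the closure of an element of $(B^{\rtimes n})^\op$, so it lies in $Z_n$ by definition, while the right-hand side lands in $Z_n$ once each of its $n-1$ right curls is exchanged for one additional strand, exactly as in the proof of Proposition~\ref{prop:wreath-closures-span}. It therefore suffices to prove that the two diagrams have the same image in $Z_n/Z_{n-1}$. The dot labelled $b$ causes no trouble: dots slide freely along strands, so we may keep it on the strand that will become the surviving circle.

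For the base case $n=1$ the permutation is trivial and the left-hand side is a single clockwise circle carrying $b$, which is the right-hand side with no curls. For the inductive step, factor the $n$-cycle as $w_n=w_{n-1}\,s_{n-1}$, where $w_{n-1}=(1\,2\,\cdots\,n-1)$ acts on the first $n-1$ strands and $s_{n-1}$ is the single crossing of strands $n-1$ and $n$; the dot $b$ sits on strand $1$. This exhibits $P_n^+$ as the diagram for $P_{n-1}^+$ on the first $n-1$ strands, stacked with one crossing of the last two strands, so that the $n$-th strand meets the rest of the diagram only in that crossing. After closing off to the right, the two endpoints of the $n$-th strand return around the annulus, and the resulting local picture---a strand crossed once by a returning arc---is, by the definition \eqref{eq:right-curl} of the right curl together with planar isotopy, a single right curl on the loop formed by the remaining strands. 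Straightening this picture is carried out with \eqref{rel:up-up-double-cross} and \eqref{rel:down-up-double-cross}; every extra summand produced by \eqref{rel:down-up-double-cross} contains a cap--cup pair, hence factors through two fewer through-strands and lies in $Z_{n-1}$. Modulo $Z_{n-1}$ we conclude that the closure of $P_n^+$ with the dot $b$ equals the closure of $P_{n-1}^+$ with the dot $b$ decorated by one further right curl; applying the inductive hypothesis to the latter closure (and noting that adjoining a right curl carries $Z_{n-2}$ into $Z_{n-1}$) yields a single circle carrying $b$ and $n-1$ right curls, as required.

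The main obstacle is the diagrammatic bookkeeping inside the inductive step: one must check that pulling the returning $n$-th strand tight and collecting its crossing into a curl produces only correction terms that strictly decrease the number of through-strands---so that they vanish in $\gr Z(\cH_B^*)$---and that the curl produced is of the nonzero (right) type rather than the left type, which would be killed by \eqref{rel:left-curl}. A clean way to control this is to invoke, in reverse, the curl-trading move already recorded in the proof of Proposition~\ref{prop:wreath-closures-span}: a right curl on a strand, acting on the vacuum, may be traded for a crossing with one additional strand modulo lower filtration. Reading that move backwards converts the crossing $s_{n-1}$, after closure, into exactly one right curl while confining every error term to $Z_{n-1}$, which gives both the correct count of $n-1$ curls and the vanishing of all corrections in the associated graded.
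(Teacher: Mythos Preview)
Your overall strategy---trade one crossing of the $n$-cycle for a right curl after closure, then repeat---is exactly the paper's, and you correctly identify the reversed move from Proposition~\ref{prop:wreath-closures-span} (equivalently \cite[(8.3)]{RS15}) as the key step, together with the fact that the error terms from \eqref{rel:down-up-double-cross} drop into lower filtration.

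The gap is in how you close the induction.  After one reduction you have the closure of $P_{n-1}^+$ carrying $b$ and one extra right curl, and you then want to ``apply the inductive hypothesis to the latter closure''.  But the inductive hypothesis is an equality in $Z(\cH_B^*)$, namely $\big(P_{n-1}^+\text{ with }b\big)\cdot 1_\varnothing = \big(\text{circle with }b,\ n-2\text{ curls}\big) + C$ with $C\in Z_{n-2}$.  ``Adjoining a right curl'' is not a well-defined $\F$-linear operation on elements of $Z(\cH_B^*)$: the error term $C$ has no canonical diagrammatic representative to which you can attach a curl, so the parenthetical ``adjoining a right curl carries $Z_{n-2}$ into $Z_{n-1}$'' does not by itself let you decorate the equality.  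Concretely, the extra curl produced by your reduction sits on the rightmost strand of the $(n-1)$-strand picture, which is precisely the strand the inductive argument for $P_{n-1}^+$ manipulates next; one must first move that curl to a stable position before the inner reduction can proceed, and that move itself produces a correction term that you have not accounted for.

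The paper sidesteps this by unrolling the induction into a direct iteration: at stage $k$ one has $w_{n-k}$ on $n-k$ strands with $k$ curls already parked at the top of the rightmost strand; one applies the curl/crossing trade to peel off another strand (exact equality), then slides the new bottom curl past a single crossing to join the others at the top.  The sole correction at stage $k$ is a closure on $n-k-1$ strands carrying $k$ curls and no crossings beyond $w_{n-k-2}$, hence lies in $Z_{(n-k-1)+k}=Z_{n-1}$.  Your argument becomes correct if you either (a) iterate in this way, keeping explicit track of where the accumulated curls sit, or (b) strengthen the inductive hypothesis to the statement that for every $m\ge 0$ the closure of $P_{n-1}^+$ with $b$ and $m$ extra curls on the rightmost strand equals the circle with $b$ and $n-2+m$ curls modulo $Z_{n-2+m}$.
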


\begin{proof}
  For $m \in \N_+$, let $w_m = (m,m-1,\dotsc,2,1) \in S_m$.  In what follows, we use the symbol $\equiv$ for diagrams to indicate that they have the same image in the associated graded vector space.  By \cite[(8.3)]{RS15}, we have
  \begin{multline*}
    \left( \left( b \otimes 1_B^{\otimes (n-1)} \right) \circ P_n^+ \right) \cdot 1_\varnothing
    = \left[
    \begin{tikzpicture}[anchorbase]
      \draw (-1,-.2) -- (-1,.2) -- (1,.2) -- (1,-.2) -- (-1,-.2);
      \draw (0,0) node {$w_n$};
      \draw[->] (-.8,.2) -- (-.8,1);
      \draw[->] (-.5,.2) -- (-.5,1);
      \draw (0,.6) node {$\cdots$};
      \draw[->] (.5,.2) -- (.5,1);
      \draw[->] (.8,.2) -- (.8,1);
      \draw[->] (-.8,-1) -- (-.8,-.2);
      \draw[->] (-.5,-1) -- (-.5,-.2);
      \draw (0,-.6) node {$\cdots$};
      \draw[->] (.5,-1) -- (.5,-.2);
      \draw[->] (.8,-1) -- (.8,-.2);
      \bluedot{(-.8,.7)} node[color=black,anchor=east] {$b$};
    \end{tikzpicture}
    \right] \cdot 1_\varnothing
    = \left[
    \begin{tikzpicture}[anchorbase]
      \draw (-1,-.2) -- (-1,.2) -- (1,.2) -- (1,-.2) -- (-1,-.2);
      \draw (0,0) node {$w_{n-1}$};
      \draw[->] (-.8,.2) -- (-.8,1);
      \draw[->] (-.5,.2) -- (-.5,1);
      \draw (0,.6) node {$\cdots$};
      \draw[->] (.5,.2) -- (.5,1);
      \draw[->] (.8,.2) -- (.8,1);
      \draw[->] (-.8,-1) -- (-.8,-.2);
      \draw[->] (-.5,-1) -- (-.5,-.2);
      \draw (0,-.6) node {$\cdots$};
      \draw[->] (.5,-1) -- (.5,-.2);
      \draw[->] (.8,-1) -- (.8,-.2);
      \redcircle{(.8,-.6)};
      \bluedot{(-.8,.7)} node[color=black,anchor=east] {$b$};
    \end{tikzpicture}
    \right] \cdot 1_\varnothing
    \\
    = \left[
    \begin{tikzpicture}[anchorbase]
      \draw (-1,-.2) -- (-1,.2) -- (1,.2) -- (1,-.2) -- (-1,-.2);
      \draw (0,0) node {$w_{n-1}$};
      \draw[->] (-.8,.2) -- (-.8,1);
      \draw[->] (-.5,.2) -- (-.5,1);
      \draw (0,.6) node {$\cdots$};
      \draw[->] (.5,.2) -- (.5,1);
      \draw[->] (.8,.2) -- (.8,1);
      \draw[->] (-.8,-1) -- (-.8,-.2);
      \draw[->] (-.5,-1) -- (-.5,-.2);
      \draw (0,-.6) node {$\cdots$};
      \draw[->] (.5,-1) -- (.5,-.2);
      \draw[->] (.8,-1) -- (.8,-.2);
      \redcircle{(.8,.6)};
      \bluedot{(-.8,.7)} node[color=black,anchor=east] {$b$};
    \end{tikzpicture}
    \right] \cdot 1_\varnothing
    \ -\
    \sum_{c \in \cB} \left[
    \begin{tikzpicture}[anchorbase]
      \draw (-1,-.2) -- (-1,.2) -- (1,.2) -- (1,-.2) -- (-1,-.2);
      \draw (0,0) node {$w_{n-2}$};
      \draw[->] (-.8,.2) -- (-.8,1);
      \draw[->] (-.5,.2) -- (-.5,1);
      \draw (0,.6) node {$\cdots$};
      \draw[->] (.5,.2) -- (.5,1);
      \draw[->] (.8,.2) -- (.8,1);
      \draw[->] (-.8,-1) -- (-.8,-.2);
      \draw[->] (-.5,-1) -- (-.5,-.2);
      \draw (0,-.6) node {$\cdots$};
      \draw[->] (.5,-1) -- (.5,-.2);
      \draw[->] (.8,-1) -- (.8,-.2);
      \draw[->] (1.5,-1) -- (1.5,1);
      \bluedot{(.8,.45)} node[anchor=west,color=black] {$c^\vee$};
      \bluedot{(1.5,-.5)} node[anchor=west,color=black] {$c$};
      \bluedot{(-.8,.7)} node[color=black,anchor=east] {$b$};
    \end{tikzpicture}
    \right] \cdot 1_\varnothing
    \equiv \left[
    \begin{tikzpicture}[anchorbase]
      \draw (-1,-.2) -- (-1,.2) -- (1,.2) -- (1,-.2) -- (-1,-.2);
      \draw (0,0) node {$w_{n-1}$};
      \draw[->] (-.8,.2) -- (-.8,1);
      \draw[->] (-.5,.2) -- (-.5,1);
      \draw (0,.6) node {$\cdots$};
      \draw[->] (.5,.2) -- (.5,1);
      \draw[->] (.8,.2) -- (.8,1);
      \draw[->] (-.8,-1) -- (-.8,-.2);
      \draw[->] (-.5,-1) -- (-.5,-.2);
      \draw (0,-.6) node {$\cdots$};
      \draw[->] (.5,-1) -- (.5,-.2);
      \draw[->] (.8,-1) -- (.8,-.2);
      \redcircle{(.8,.6)};
      \bluedot{(-.8,.7)} node[color=black,anchor=east] {$b$};
    \end{tikzpicture}
    \right] \cdot 1_\varnothing
    \\
    = \left[
    \begin{tikzpicture}[anchorbase]
      \draw (-1,-.2) -- (-1,.2) -- (1,.2) -- (1,-.2) -- (-1,-.2);
      \draw (0,0) node {$w_{n-2}$};
      \draw[->] (-.8,.2) -- (-.8,1);
      \draw[->] (-.5,.2) -- (-.5,1);
      \draw (0,.6) node {$\cdots$};
      \draw[->] (.5,.2) -- (.5,1);
      \draw[->] (.8,.2) -- (.8,1);
      \draw[->] (-.8,-1) -- (-.8,-.2);
      \draw[->] (-.5,-1) -- (-.5,-.2);
      \draw (0,-.6) node {$\cdots$};
      \draw[->] (.5,-1) -- (.5,-.2);
      \draw[->] (.8,-1) -- (.8,-.2);
      \redcircle{(.8,.6)};
      \redcircle{(.8,-.6)};
      \bluedot{(-.8,.7)} node[color=black,anchor=east] {$b$};
    \end{tikzpicture}
    \right] \cdot 1_\varnothing
    \equiv \left[
    \begin{tikzpicture}[anchorbase]
      \draw (-1,-.2) -- (-1,.2) -- (1,.2) -- (1,-.2) -- (-1,-.2);
      \draw (0,0) node {$w_{n-2}$};
      \draw[->] (-.8,.2) -- (-.8,1);
      \draw[->] (-.5,.2) -- (-.5,1);
      \draw (0,.6) node {$\cdots$};
      \draw[->] (.5,.2) -- (.5,1);
      \draw[->] (.8,.2) -- (.8,1);
      \draw[->] (-.8,-1) -- (-.8,-.2);
      \draw[->] (-.5,-1) -- (-.5,-.2);
      \draw (0,-.6) node {$\cdots$};
      \draw[->] (.5,-1) -- (.5,-.2);
      \draw[->] (.8,-1) -- (.8,-.2);
      \redcircle{(.8,.6)} node[anchor=west,color=black] {$2$};
      \bluedot{(-.8,.7)} node[color=black,anchor=east] {$b$};
    \end{tikzpicture}
    \right] \cdot 1_\varnothing
    \equiv \dotsb \equiv
    \left[
    \begin{tikzpicture}[anchorbase]
      \draw[->] (0,-1) -- (0,1);
      \redcircle{(0,0)} node[anchor=west,color=black] {$n-1$};
      \bluedot{(0,.7)} node[color=black,anchor=east] {$b$};
    \end{tikzpicture}
    \right] \cdot 1_\varnothing
  \end{multline*}
\end{proof}

\begin{rem}
  The clockwise circle appearing in the statement of Proposition~\ref{prop:Pn-dotted-circle} is denoted $c_{b,{n-1}}$ in \cite[(8.9)]{RS15}.  It was shown in \cite[Prop.~8.9]{RS15} that such diagrams generate $Z(\cH_B^*)$ as an algebra, a result that also follows from Propositions~\ref{prop:wreath-closures-span} and~\ref{prop:Pn-dotted-circle}.
\end{rem}

As a corollary of Propositions~\ref{prop:encircle-versus-juxtapose} and~\ref{prop:Pn-dotted-circle}, we obtain a basis of the center of $Z(\cH_\F)$.  See \cite{KLM16} for a different proof of this statement.

\begin{cor} \label{cor:P-lambda-basis}
  If $B = \F$, the elements $P_\lambda^+ \cdot 1_\varnothing$ form a basis of $Z(\cH_\F)$.
\end{cor}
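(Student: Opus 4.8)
The plan is to establish spanning directly from Proposition~\ref{prop:wreath-closures-span} and then extract linear independence by passing to the associated graded algebra $\gr Z(\cH_\F)$ of the filtration~\eqref{eq:filtration}. Specializing Proposition~\ref{prop:wreath-closures-span} to $B = \F$, we have $B^{\rtimes n} = \F S_n$, so $Z(\cH_\F) = \sum_n [(\F S_n)^\op] \cdot 1_\varnothing$. Since $\F S_n$ is semisimple with all simple modules of type $M$, Lemma~\ref{lem:cocenter-basis} shows the cocenter $\Tr(\F S_n)$ is spanned by classes of permutations, and two permutations give the same class precisely when they share a cycle type. The class of a permutation of cycle type $\lambda \vdash n$ is exactly $P_\lambda^+$ (with $B = \F$ there are no dots), so $\{P_\lambda^+ \cdot 1_\varnothing : \lambda \vdash n\}$ spans $[(\F S_n)^\op] \cdot 1_\varnothing$, and hence $\{P_\lambda^+ \cdot 1_\varnothing : \lambda \in \partition\}$ spans $Z(\cH_\F)$.

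For linear independence I would use the usual filtration argument: if $\sum_\lambda a_\lambda P_\lambda^+ \cdot 1_\varnothing = 0$ and $n$ is the largest size of a partition occurring with a nonzero coefficient, then passing to $Z_n/Z_{n-1}$ annihilates the terms with $|\lambda| < n$ and leaves $\sum_{|\lambda| = n} a_\lambda \gr(P_\lambda^+ \cdot 1_\varnothing) = 0$. It therefore suffices to prove that $\{\gr(P_\lambda^+ \cdot 1_\varnothing) : \lambda \vdash n\}$ is linearly independent in $Z_n/Z_{n-1}$ for each $n$. This is where Propositions~\ref{prop:encircle-versus-juxtapose} and~\ref{prop:Pn-dotted-circle} do the geometric work: the latter (with $b = 1$) identifies $\gr(P_{\lambda_j}^+ \cdot 1_\varnothing)$ with the class of a single clockwise circle carrying $\lambda_j - 1$ right curls, and the former turns the product $P_\lambda^+ = P_{\lambda_1}^+ \dotsm P_{\lambda_\ell}^+$ into juxtaposition, so that $\gr(P_\lambda^+ \cdot 1_\varnothing)$ is the class of the disjoint union of circles carrying $\lambda_1 - 1, \dotsc, \lambda_\ell - 1$ right curls.

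The genuinely substantive step, and the main obstacle, is then to show these ``curl monomials'' are linearly independent, since Proposition~\ref{prop:wreath-closures-span} and \cite[Prop.~8.9]{RS15} only guarantee that they span. I would separate them using the family of functors $\bF_m \colon \cH_\F \to \F S_m\md$ of \cite[\S7]{RS15}: a central element of $\cH_\F^*$ acts on each $\F S_m$-module as a central element of $\F S_m$, hence as a scalar on each irreducible $S^\mu$, $\mu \vdash m$. A clockwise circle carrying $k$ right curls acts on $S^\mu$ as the content power sum $\sum_{b \in \mu} c(b)^k$, with the right curl playing the role of a Jucys--Murphy element (for $k = 0$ this recovers the scalar $|\mu|$, consistent with a plain clockwise circle mapping to $m$ under $\bF_m$). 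Consequently the curl monomial representing $\gr(P_\lambda^+ \cdot 1_\varnothing)$ evaluates on $S^\mu$ to $\prod_j \sum_{b \in \mu} c(b)^{\lambda_j - 1}$.

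Finally, I would observe that as functions of $\mu$ these are polynomial functions on Young diagrams whose natural weight is $\sum_j \lambda_j = |\lambda| = n$, and whose top-weight parts are the pairwise distinct monomials in the algebraically independent content power sums (the map $\lambda \mapsto \{\lambda_j - 1\}$ is injective on partitions of $n$). A nonzero combination in $Z_n/Z_{n-1}$ would evaluate to a function whose degree-$n$ part is a nontrivial such combination of distinct monomials, which is impossible; elements of $Z_{n-1}$ evaluate only to functions of weight $\le n-1$, so they cannot cancel the top weight. This forces all $a_\lambda = 0$, giving independence in each $Z_n/Z_{n-1}$ and hence, via the filtration argument, independence of $\{P_\lambda^+ \cdot 1_\varnothing\}$ throughout. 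The delicate point is precisely this Jucys--Murphy identification of curls with contents together with the polynomiality (algebraic independence) of the content power sums; without such external input one cannot upgrade the purely diagrammatic spanning statement to a basis.
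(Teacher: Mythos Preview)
Your argument is correct in outline but takes a substantially different route from the paper. The paper's proof is essentially two lines: it cites Khovanov's result \cite[Prop.~3]{Kho14} that $Z(\cH_\F)$ is already known to be the polynomial algebra $\F[c_0,c_1,c_2,\dotsc]$ on the clockwise circles $c_k$ carrying $k$ right curls, and then uses Propositions~\ref{prop:encircle-versus-juxtapose} and~\ref{prop:Pn-dotted-circle} to obtain $\gr(P_\lambda^+ \cdot 1_\varnothing) = \gr(c_{\lambda_1-1} \dotsm c_{\lambda_\ell-1})$. Since $\lambda \mapsto c_{\lambda_1-1} \dotsm c_{\lambda_\ell-1}$ is a bijection from partitions onto the monomial basis of the polynomial ring, triangularity with respect to the filtration finishes the proof. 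You instead bypass Khovanov's computation and try to prove linear independence of the curl monomials from scratch, via the functors $\bF_m$, the Jucys--Murphy interpretation of the right curl, and the algebraic independence of the content power sums on Young diagrams. This is precisely the shifted-symmetric-function viewpoint of \cite{OO97} and \cite{KLM16} alluded to in the Remark following the corollary; it buys representation-theoretic insight into \emph{why} the curl monomials are independent, at the cost of invoking a fact (that the ring of polynomial functions on Young diagrams is freely generated by content power sums) which is itself comparable in depth to \cite[Prop.~3]{Kho14}. Two minor points: your appeal to Lemma~\ref{lem:cocenter-basis} for the spanning step is a mis-citation, since that lemma gives the idempotent basis of the cocenter, whereas what you actually use is the more elementary fact that $\Tr(\F S_n)$ has one basis vector per conjugacy class; and your weight bound ``elements of $Z_{n-1}$ evaluate to functions of weight $\le n-1$'' is not immediate and needs the inductive argument you only hint at.
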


\begin{proof}
  It was shown in \cite[Prop.~3]{Kho14} that $Z(\cH_B)$ is isomorphic to the polynomial algebra $\F[c_0,c_1,c_2,\dotsc]$, where $c_k$ is a clockwise circle with $k$ right curls.  By Proposition~\ref{prop:Pn-dotted-circle} and Proposition~\ref{prop:encircle-versus-juxtapose}, we have
  \[
    \gr(P_\lambda^+ \cdot 1) = \gr(c_{\lambda_1} c_{\lambda_2} \dotsm c_{\lambda_\ell}),\quad
    \lambda = (\lambda_1,\dotsc,\lambda_\ell) \in \partition.
  \]
  The result follows.
\end{proof}

\begin{rem}
  In the case $B = \F$, the main result of \cite{KLM16} identifies $Z(\cH_\F)$ with the algebra $\Sym'$ of shifted symmetric functions defined by Okounkov and Olshanski in \cite{OO97}.  The algebra $\Sym'$ is a filtered algebra whose associated graded algebra is the algebra $\Sym$ of symmetric functions.  Moreover, in the case $B=\F$, the diagrams in the statement of Proposition~\ref{prop:Pn-dotted-circle} inherit an explicit combinatorial interpretation in the language of shifted symmetric functions.  In particular, the isomorphism
  \[
    Z(\cH_\F)\cong \Sym',
  \]
  is an isomorphism of filtered algebras, and the diagram $P_n^+ \cdot 1_\varnothing$ is mapped to the shifted power sum denoted $p_n^{\#}$ in \cite{OO97};
  the diagram
  \[
    \begin{tikzpicture}[anchorbase]
      \draw[<-] (0,0) arc (90:470:.5);
      \redcircle{(0.45,-0.7)} node[color=black,anchor=west] {$n-1$};
    \end{tikzpicture},
  \]
  on the other hand, is mapped to the $n$th Boolean cumulant of Kerov's transition measure.  (We refer to \cite{KLM16} for the precise definitions of both shifted symmetric functions and of Kerov's transition measure).  Proposition~\ref{prop:Pn-dotted-circle} then corresponds to the fact that the shifted power sum and the Boolean cumulant are both deformations of the power sum symmetric function.
\end{rem}

%
\section{Further directions} \label{sec:further-directions}
%

The current paper naturally suggests several interesting directions for future research.  We mention some of these here.

\subsection*{More general Frobenius algebras}  We have made several simplifying assumptions in the current paper.  In particular, while the Heisenberg categories $\cH_B$ defined in \cite{RS15} are valid for an arbitrary graded Frobenius superalgebra $B$, we assume in the current paper that the trace map of $B$ is supersymmetric and even.  (In particular, the Nakayama automorphism of $B$ is trivial.)  We also assume that all simple $B$-modules are of type $M$ (i.e.\ not isomorphic to their parity shifts).  See Section~\ref{subsec:hB}.  Allowing $B$ to have simple modules of type $Q$ would result in the appearance of the space of Schur $Q$-functions.  On the other hand, allowing the Nakayama automorphism to be nontrivial (e.g.\ equal to the parity involution) would introduce twisted Heisenberg algebras into the picture (see \cite{CS15,HS16} and \cite[Rem.~6.2]{RS15}).  It would be interesting to pursue these generalizations.

\medskip

\subsection*{Connections to W-algebras} \label{subsec:future-w-algebras}  It is shown in \cite[Th.~1]{CLLS15} that $\Tr(\cH_\F)$ is isomorphic to a quotient of the W-algebra $W_{1+\infty}$.  The action of $\Tr(\cH_\F)$ on $Z(\cH_\F)$ then gives a graphical interpretation of the action of $W_{1+\infty}$ on the space of shifted symmetric functions, which is identified with a level one irreducible representation of $W_{1+\infty}$.  In the more general setting, we expect that $\Tr(\cH_B^*)$ should be related to W-algebras associated to the lattice $K_0(B)$, as conjectured in \cite{CLLS15}.

\subsection*{Wreath product algebras} As described in \cite[\S7]{RS15}, the categories $\cH_B^*$ act on categories of modules over wreath product algebras $B^{\otimes n} \rtimes S_n$, $n \in \N$.  Combined with the results of the current paper, this yields actions of Heisenberg algebras on the centers of these module categories.  One should thus be able to use the diagrammatics of the categories $\cH_B^*$ to study these centers.  For example, one should be able to develop a graphical calculus for centers of wreath product algebras in terms of closed diagrams in the categories $\cH_B^*$.

\subsection*{Jack symmetric functions} In the current paper, we develop a categorification of the inner product used to define the Jack symmetric functions (see Theorem~\ref{theo:Jack-diagrammatic}).  In would interesting to try to give a graphical description of the Jack symmetric functions themselves.  That is, one would like to describe natural annular diagrams in $\Tr(\cH_B)$ that correspond to these functions.

\subsection*{Hilbert schemes} \label{subsubsec:Hilbert-schemes} There is a well known relationship, due to Haiman \cite{Hai01}, between the $\C^* \times \C^*$-equivariant $K$-theory of the Hilbert scheme $\textup{Hilb}(\C^2)$ of points on $\C^2$ and the Macdonald ring of symmetric functions, which realises the basis of Macdonald polynomials as classes of $\C^* \times \C^*$-fixed points in equivariant $K$-theory.  On the other hand, there is a parallel description of the Jack symmetric functions using equivariant homology, due to Nakajima \cite{Nak96} and Li--Qin--Wang \cite{LQW04}.  The relationship between $K$-theory and homology is analogous to the relationship between the Grothendieck group and trace in the current paper.  It would be interesting to directly connect the constructions of the present paper to the works \cite{Nak96,LQW04}, by, for example, constructing a categorical Heisenberg action on the equivariant derived category of $\textup{Hilb}(\C^2)$.

%
\appendix
\section{Presentations of lattice Heisenberg algebras and the Macdonald pairing} \label{appendix}
%

In this appendix, we first deduce presentations of lattice Heisenberg algebras that appear naturally in categorification.  These presentations are used in the proof of Proposition~\ref{prop:hB=hV}.  In particular, we find presentations of lattice Heisenberg algebras arising from the Macdonald inner product that may be of independent interest.  We then explain how the limiting procedure that produces the Jack inner product from the Macdonald inner product (see Section~\ref{subsec:Jack-pairing}) can be interpreted in terms of lattice Heisenberg algebras.

\subsection{Presentations}

Let us recall some facts about various generating sets for $\Sy$.  The generating functions for the elementary and complete symmetric functions are
\[
  E(z) = \sum_{r \ge 0} e_r z^t = \prod_{i \ge 1} (1 + x_i z),\quad
  H(z) = \sum_{r \ge 0} h_r z^r = \prod_{i \ge 1} (1 - x_i z)^{-1}.
\]
The generating function for the power sums is
\[
  P(z) = \sum_{r \ge 1} p_r z^{r-1} = \sum_{i \ge 1} \frac{x_i}{1-x_i z} = \sum_{i \ge 1} \frac{d}{dz} \log \frac{1}{1-x_i z}.
\]
Thus
\begin{equation} \label{PH-relation}
  P(z) = \frac{d}{dz} \log H(z) = H'(z)/H(z).
\end{equation}
Similarly,
\begin{equation} \label{PE-relation}
  P(-z) = \frac{d}{dz} \log E(z) = E'(z)/E(z).
\end{equation}

For a finite-dimensional $\Z$-graded super vector space $V$, we define its graded dimension to be
\[
  \grdim V := \sum_{s \in \Z,\, \epsilon \in \Z_2} q^s \pi^\epsilon \dim V_{s,\epsilon} \in \Z_{q,\pi}.
\]

\begin{prop} \label{prop:hh-presentation}
  The algebra $\fh_L$ is generated by the complete symmetric functions $h_{n,i}^\pm$, $n \in \N_+$, $i \in I$, with relations
  \[
    [h_{n,i}^+, h_{m,j}^+] = 0,\quad
    [h_{n,i}^-,h_{m,j}^-] = 0,\quad
    h_{n,i}^+ h_{m,j}^- = \sum_{\ell=0}^{\min(n,m)} \grdim S^\ell(V) h_{m-\ell,j}^- h_{n-\ell,i}^+,\quad n,m \in \N_+,\ i,j \in I,
  \]
  where $V$ is a $\Z$-graded super vector space with $\grdim V = \langle i, j \rangle$.
\end{prop}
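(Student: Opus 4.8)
The plan is to derive the $h$-presentation from the defining presentation of $\fh_L$ in terms of power sums by a generating-function computation. The generation claim and the two like-sign relations are immediate: the complete homogeneous functions $\{h_n\}$ generate $\Sym$ over $\F_{q,\pi}$ just as the $\{p_n\}$ do, so the $h_{n,i}^\pm$ generate $\fh_L$; and since $\fh_L^+$ and $\fh_L^-$ are each isomorphic to the commutative algebra $\Sym \otimes_{\Z_{q,\pi}} L$, we have $[h_{n,i}^+, h_{m,j}^+] = 0$ and $[h_{n,i}^-, h_{m,j}^-] = 0$. All the content is therefore in the mixed relation, which I would obtain by commuting generating functions.

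Work in a completion of $\fh_L$ and introduce
\[
  H_i^\pm(z) = \sum_{n \ge 0} h_{n,i}^\pm z^n = \exp\left( \sum_{r \ge 1} \frac{p_{r,i}^\pm}{r} z^r \right),
\]
the second equality following from \eqref{PH-relation} applied in each colour $i$. Writing $A = \sum_{r \ge 1} \tfrac{z^r}{r} p_{r,i}^+$ and $B = \sum_{s \ge 1} \tfrac{w^s}{s} p_{s,j}^-$, the defining relation $[p_{r,i}^+, p_{s,j}^-] = \delta_{r,s}\, r\, \theta_r(\langle i,j\rangle)$ gives
\[
  [A,B] = \sum_{r \ge 1} \frac{\theta_r(\langle i,j\rangle)}{r}\, (zw)^r,
\]
which is a scalar and hence central. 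The Baker--Campbell--Hausdorff identity for a central commutator then yields $H_i^+(z)\, H_j^-(w) = e^{[A,B]}\, H_j^-(w)\, H_i^+(z)$. Extracting the coefficient of $z^n w^m$ produces exactly the asserted mixed relation, provided one identifies the scalar series $e^{[A,B]}$ with $\sum_{\ell \ge 0} \grdim S^\ell(V)\, (zw)^\ell$.

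That identification is the crux, and the step I expect to require the most care. It amounts to the claim that $\theta_r$ is the $r$-th (super) Adams operation on graded dimensions, i.e.\ that
\[
  \sum_{\ell \ge 0} \grdim S^\ell(V)\, t^\ell = \exp\left( \sum_{r \ge 1} \frac{\theta_r(\grdim V)}{r}\, t^r \right)
\]
for any $\Z$-graded super vector space $V$. I would verify this by factoring $S(V) = S(V_\even) \otimes \Lambda(V_\odd)$, so that the left-hand side is a product of factors $(1 - q^s t)^{-1}$ over even basis vectors of degree $s$ and $(1 + \pi q^s t)$ over odd basis vectors of degree $s$; taking logarithms, the coefficient of $t^r/r$ is $q^{rs}$ in the even case and $-(-\pi)^r q^{rs}$ in the odd case, which are precisely $\theta_r(q^s)$ and $\theta_r(\pi q^s)$ under $\theta_r(q) = q^r$, $\theta_r(\pi) = -(-\pi)^r$. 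Summing over a homogeneous basis gives $\theta_r(\grdim V)$, establishing the identity and hence the mixed relation.

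Finally, to see that these relations \emph{present} $\fh_L$ rather than merely hold in it, I would run a normal-ordering argument. Let $\mathfrak{a}$ be the algebra abstractly defined by the stated generators and relations; mapping generators to their namesakes gives a surjection $\mathfrak{a} \to \fh_L$. Using the like-sign relations to sort and the mixed relation to move every $h^+$ to the left of every $h^-$ (at the cost of strictly lower terms, which terminates by a degree argument), one sees that $\mathfrak{a}$ is spanned by the ordered monomials $h_\blambda^+ h_\bmu^-$ indexed by pairs of $I$-tuples of partitions $\blambda, \bmu$. Their images are exactly the basis $\{h_\blambda^+ h_\bmu^-\}$ of $\fh_L \cong \fh_L^+ \otimes_{\F_{q,\pi}} \fh_L^-$ arising from the complete-homogeneous bases of $\fh_L^\pm$; a surjection carrying a spanning set onto a basis is an isomorphism, so the presentation is complete.
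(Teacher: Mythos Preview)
Your proof is correct and follows essentially the same route as the paper: both exponentiate the power-sum generating functions, compute the commutator $[A,B]$ as a central scalar, apply the BCH identity $e^A e^B = e^{[A,B]} e^B e^A$, and identify the resulting scalar series with $\sum_\ell \grdim S^\ell(V)\,(zw)^\ell$. The only differences are that you spell out the Adams-operation identity and the normal-ordering/presentation argument in more detail, whereas the paper invokes ``the general theory of Heisenberg doubles'' for the latter and writes the symmetric-algebra identification directly as a product $\prod_s (1+\pi q^s zw)^{\dim V_{s,1}}/(1-q^s zw)^{\dim V_{s,0}}$.
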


\begin{proof}
  By the general theory of Heisenberg doubles, it suffices to compute the commutation relations between the $h_{n,i}^-$ and $h_{m,j}^+$.  By \eqref{PH-relation}, we have
  \[
    H(z) = \exp \sum_{n \ge 1} \frac{p_n}{n} z^n.
  \]
  Now,
  \begin{multline*}
    \left[ \sum_{r \ge 1} \frac{p_{r,i}^+}{r} z^r, \sum_{k \ge 1} \frac{p_{k,j}^-}{k} w^k \right]
    = \sum_{r \ge 1} \frac{\theta_r(\grdim V)}{r} z^r w^r \\
    = \sum_{s,\epsilon} (-1)^{\epsilon}(\dim V_{s,\epsilon}) \sum_{r \ge 1} \frac{((-\pi)^\epsilon q^s z w)^r}{r}
    = \sum_{s,\epsilon} (-1)^{\epsilon+1} (\dim V_{s,\epsilon}) \log (1 - (-\pi)^\epsilon q^s z w).
  \end{multline*}

  Therefore,
  \begin{align*}
    \sum_{n,m \ge 0} h_{n,i}^+ h_{m,j}^- z^n w^m
    &= H^+_i(z) H^-_j(w) \\
    &= \exp \left[ \sum_{r \ge 1} \frac{p_{r,i}^+}{r} z^r, \sum_{k \ge 1} \frac{p_{k,j}^-}{k} w^k \right] H^-_j(w) H^+_i(z) \\
    &= \left( \prod_{s \in \Z} \frac{(1 + \pi q^s zw)^{\dim V_{s,1}}}{(1-q^s z w)^{\dim V_{s,0}}} \right) \sum_{n,m \ge 0} h_{m,j}^- h_{n,i}^+ z^n w^m  \\
    &= \left( \sum_{\ell \ge 0} z^\ell w^\ell \grdim S^\ell(V) \right) \sum_{n,m \ge 0} h_{m,j}^- h_{n,i}^+ z^n w^m,  \\
  \end{align*}
  and the result follows by equating coefficients.
  \details{In the second equality above, we use that $\exp(B) \exp(A) = \exp([B,A]) \exp(A) \exp(B)$ whenever $[A,B]$ commutes with both $A$ and $B$.}
\end{proof}

\begin{prop} \label{prop:he-presentation}
  The algebra $\fh_L$ is generated by the complete symmetric functions $h_{n,i}^+$ and elementary symmetric functions $e_{n,i}^-$, $n \in \N_+$, $i \in I$, with relations
  \[
    [e_{n,i}^-, e_{m,j}^-] = 0,\quad
    [h_{n,i}^+,h_{m,j}^+],\quad
    h_{n,i}^+ e_{m,j}^- = \sum_{\ell=0}^{\min(n,m)} \grdim \Lambda^\ell(V) e_{m-\ell,j}^- h_{n-\ell,i}^+,\quad n,m \in \N_+,\ i,j \in I,
  \]
  where $V$ is a $\Z$-graded super vector space with $\grdim V = \langle i, j \rangle$.
\end{prop}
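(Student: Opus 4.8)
The plan is to follow the proof of Proposition~\ref{prop:hh-presentation} almost verbatim, substituting the generating-function identity \eqref{PE-relation} for the elementary symmetric functions in place of the identity \eqref{PH-relation} for the complete symmetric functions. By the general theory of Heisenberg doubles, it suffices to determine the commutation relation between the generators $h^+_{n,i}$ of $\fh_L^+$ and the generators $e^-_{m,j}$ of $\fh_L^-$; the relations $[h^+_{n,i},h^+_{m,j}]=0$ and $[e^-_{n,i},e^-_{m,j}]=0$ are immediate from the commutativity of $\Sym$ on each side, and generation of $\fh_L$ is automatic since $\{h_n\}_{n\ge 1}$ and $\{e_n\}_{n\ge 1}$ are each algebraically independent generating sets of $\Sym$.

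First I would rewrite the generating functions in exponential form. From \eqref{PE-relation} one obtains
\[
  E(z) = \exp \sum_{r \ge 1} \frac{(-1)^{r-1}}{r}\, p_r z^r,
\]
to be compared with $H(z) = \exp \sum_{r \ge 1} \frac{1}{r}\, p_r z^r$ coming from \eqref{PH-relation}; the only new feature is the alternating sign $(-1)^{r-1}$ in the exponent. Choosing a $\Z$-graded super vector space $V$ with $\grdim V = \langle i, j \rangle$ and using $[p^+_{r,i},p^-_{k,j}] = \delta_{r,k}\, r\, \theta_r(\grdim V)$, the key commutator becomes
\[
  \left[ \sum_{r \ge 1} \frac{p_{r,i}^+}{r}\, z^r,\ \sum_{k \ge 1} \frac{(-1)^{k-1} p_{k,j}^-}{k}\, w^k \right]
  = \sum_{r \ge 1} \frac{(-1)^{r-1}}{r}\, \theta_r(\grdim V)\, (zw)^r.
\]
Then, expanding $\theta_r(\grdim V) = \sum_{s,\epsilon} (-1)^\epsilon (\dim V_{s,\epsilon}) \big( (-\pi)^\epsilon q^s \big)^r$ exactly as in the previous proof, and applying $\sum_{r \ge 1} \frac{(-1)^{r-1}}{r}\, a^r = \log(1+a)$ in place of the $-\log(1-a)$ of the symmetric case, the commutator equals
\[
  \sum_{s,\epsilon} (-1)^\epsilon (\dim V_{s,\epsilon})\, \log\big( 1 + (-\pi)^\epsilon q^s zw \big).
\]

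Next I would exponentiate. Since $[A,B]$ is central, the identity $\exp(A)\exp(B) = \exp([A,B])\exp(B)\exp(A)$ gives $H^+_i(z) E^-_j(w) = \exp([\,\cdot\,])\, E^-_j(w) H^+_i(z)$, and the exponential of the commutator above is the product
\[
  \prod_{s \in \Z} \frac{(1 + q^s zw)^{\dim V_{s,0}}}{(1 - \pi q^s zw)^{\dim V_{s,1}}}.
\]
The final step is to recognize this as the generating function $\sum_{\ell \ge 0} (zw)^\ell \grdim \Lambda^\ell(V)$ for the exterior powers of the super vector space $V$ — the exterior analogue of the symmetric-power identity invoked in Proposition~\ref{prop:hh-presentation}, with the roles of the even and odd parts interchanged. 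Equating coefficients of $z^n w^m$ in $\sum_{n,m} h^+_{n,i} e^-_{m,j} z^n w^m = H^+_i(z)E^-_j(w)$ then yields the claimed relation $h_{n,i}^+ e_{m,j}^- = \sum_{\ell=0}^{\min(n,m)} \grdim \Lambda^\ell(V)\, e_{m-\ell,j}^- h_{n-\ell,i}^+$.

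The main point requiring care — and the only genuine difference from the proof of Proposition~\ref{prop:hh-presentation} — is the sign bookkeeping. One must verify that the alternating sign $(-1)^{r-1}$ arising from $E(z)$ conspires with the super signs $(-1)^\epsilon$ and $(-\pi)^\epsilon$ precisely so as to swap the even and odd factors relative to the symmetric case, turning $\prod_s \frac{(1+\pi q^s zw)^{\dim V_{s,1}}}{(1 - q^s zw)^{\dim V_{s,0}}}$ (the $\grdim S^\ell(V)$ generating function) into $\prod_s \frac{(1 + q^s zw)^{\dim V_{s,0}}}{(1-\pi q^s zw)^{\dim V_{s,1}}}$ (the $\grdim \Lambda^\ell(V)$ generating function). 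This is exactly the super-linear-algebra fact that exterior powers of an even space are finite while exterior powers of an odd space behave like symmetric powers, mirroring the dual behaviour exploited for $S^\ell$.
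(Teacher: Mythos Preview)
Your proof is correct and follows essentially the same approach as the paper's own proof: both reduce to the commutator of the exponents of $H^+_i(z)$ and $E^-_j(w)$, evaluate it via the power-sum relations and the logarithm identity, exponentiate to the product $\prod_s (1+q^s zw)^{\dim V_{s,0}}/(1-\pi q^s zw)^{\dim V_{s,1}}$, and recognize this as the generating function for $\grdim \Lambda^\ell(V)$. Your placement of the sign $(-1)^{k-1}$ on the $p^-$ side (coming from $E^-_j(w)$) is in fact slightly more natural than the paper's, which puts it on the $p^+$ side, but since only the diagonal $r=k$ survives this makes no difference.
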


\begin{proof}
  By the general theory of Heisenberg doubles, it suffices to compute the commutation relations between the $h_n^+$ and $e_m^-$.  By \eqref{PE-relation}, we have
  \[
    E(z) = \exp \sum_{n \ge 1} (-1)^{n-1} \frac{p_n}{n} z^n.
  \]
  Now,
  \begin{multline*}
    \left[ \sum_{r \ge 1} (-1)^{r-1} \frac{p_{r,i}^+}{r} z^r, \sum_{k \ge 1} \frac{p_{k,j}^-}{k} w^k \right]
    = \sum_{r \ge 1} (-1)^{r-1} \frac{\theta_r(\grdim V)}{r} z^r w^r \\
    = -\sum_{s,\epsilon} (-1)^{\epsilon}(\dim V_{s,\epsilon}) \sum_{r \ge 1} \frac{(-(-\pi)^\epsilon q^s z w)^r}{r}
    = \sum_{s,\epsilon} (-1)^\epsilon (\dim V_{s,\epsilon}) \log (1 + (-\pi)^\epsilon q^s z w).
  \end{multline*}

  Therefore,
  \begin{multline*}
    \sum_{n,m \ge 0} h_{n,i}^+ e_{m,j}^- z^n w^m
    = H^+_i(z) E^-_j(w)
    = \exp \left[ \sum_{r \ge 1} (-1)^{r-1} \frac{p_{r,i}^+}{r} z^r, \sum_{k \ge 1} \frac{p_{k,j}^-}{k} w^k \right] E^-_j(w) H^+_i(z) \\
    = \left( \prod_{s \in \Z} \frac{(1 + q^s zw)^{\dim V_{s,0}}}{(1-\pi q^s z w)^{\dim V_{s,1}}} \right) \sum_{n,m \ge 0} e_{m,j}^- h_{n,i}^+ z^n w^m
    = \left( \sum_{\ell \ge 0} z^\ell w^\ell \grdim \Lambda^\ell(V) \right) \sum_{n,m \ge 0} e_m^- h_n^+ z^n w^m,
  \end{multline*}
  and the result follows by equating coefficients.
  \details{In the second equality above, we use that $\exp(B) \exp(A) = \exp([B,A]) \exp(A) \exp(B)$ whenever $[A,B]$ commutes with both $A$ and $B$.}
\end{proof}

\begin{rem}
  By applying the Hopf automorphism of $\Sy$ that interchanges $e_n$ and $h_n$, we can also obtain presentations in terms of $e_{n,i}^\pm$ and in terms of $h_{n,i}^-$ and $e_{n,i}^+$.
\end{rem}

\begin{rem}
  In the case $L = \Z$, with multiplication as the lattice pairing, Propositions~\ref{prop:hh-presentation} and~\ref{prop:he-presentation} are well known.  For certain other lattices, these presentations also occur in \cite[\S2.2]{CL12} and \cite[\S5]{Ber15}.   The presentations of these propositions also appear in \cite[Prop.~5.5]{RS15}.  However, in that paper there are no $p_n$ and so the connection to the power sum presentation is not given.  In the ungraded non-super case, the maps $\theta_n$ are trivial, so the situation simplifies greatly.  In this setting, Proposition~\ref{prop:hh-presentation} essentially appears in \cite[Lem.~1.2]{Kru15}.
\end{rem}

\subsection{More general gradings}

The lattice Heisenberg algebra construction of Section~\ref{sec:lattice-Heis} can be generalized somewhat.  In particular, for $r \in \N_+$, we can replace $\Z_{q,\pi}$ everywhere by the algebra
\[
  \hat \Z_{q_1,\dotsc,q_r,\pi}
  \subseteq
  \Z \llbracket q_1^{\pm 1},\dotsc, q_r^{\pm 1} \rrbracket [\pi]/(\pi^2-1)
\]
consisting of Laurent series
\[
  \sum_{(s_1,\dotsc,s_r,\epsilon) \in \Z^r \otimes \Z_2} a_{s_1,\dotsc,s_r,\epsilon} q_1^{s_1} \dotsm q_r^{s_r} \pi^\epsilon,\quad a_{s_1,\dotsc,s_r,\epsilon} \in \Z,
\]
such that there exists $N \in \Z$ satisfying
\[
  s_1,\dotsc,s_r < N \implies a_{s_1,\dotsc,s_r,\epsilon} = 0.
\]
We define $\hat \F_{q_1,\dotsc,q_s,\epsilon}$ similarly, and we replace $\theta_n$, $n \in \N_+$, by the $\F$-algebra homomorphism
\[
  \theta_n \colon \hat \F_{q_1,\dotsc,q_s,\epsilon} \to \hat \F_{q_1,\dotsc,q_s,\epsilon},\quad
  \theta_n(q_i) = q_i^n,\quad \theta_n(\pi) = -(-\pi)^n,\quad i \in \{1,\dotsc,r\}.
\]

Remark~\ref{rem:rank-one-pairing}, Proposition~\ref{prop:hh-presentation}, and Proposition~\ref{prop:he-presentation} continue to hold in this more general setting, where the graded vector space $V$ appearing there is now $\Z^r \times \Z_2$-graded and we require that its graded dimension
\[
  \grdim V := \sum_{s_1,\dotsc,s_r \in \Z,\, \epsilon \in \Z_2} q_1^{s_1} \dotsm q_r^{s_r} \pi^\epsilon \dim V_{s_1,\dotsc,s_r,\epsilon}
\]
lie in $\hat \Z_{q_1,\dotsc,q_r,\pi}$.

\subsection{The Macdonald inner product} \label{subsec:Macdonald-pairing}

Consider the symmetric superalgebra
\[
  S(x,y) := T(\C x \oplus \C y)/\langle ab - (-1)^{\bar a \bar b} ba \mid a,b \in \{x,y\} \rangle.
\]
where $T(\C x \oplus \C y)$ denotes the tensor superalgebra on the super vector space spanned by $x$ and $y$, and angled brackets denote the ideal generated by the set they enclose.  If we declare $\deg x = (1,0,0) \in \Z^2 \times \Z_2$ $\deg y = (0,1,1) \in \Z^2 \times \Z_2$, then
\begin{equation} \label{eq:Sxy-Macdonald-grdim}
  \grdim S(x,y)
  = \frac{1+\pi q_2}{1-q_1}
  \in \hat \Z_{q_1,q_2,\pi}.
\end{equation}
Therefore, the pairing~\eqref{eq:full-pairing-determined-by-V} becomes
\begin{equation} \label{eq:Macdonald-pairing}
  \langle p_\lambda^-, p_\mu^+ \rangle = \delta_{\lambda,\mu} z_\lambda \prod_{i=1}^{\ell(\lambda)} \frac{1+\pi q_2^{\lambda_i}}{1 - q_1^{\lambda_i}},\quad \lambda,\mu \in \partition.
\end{equation}
If we specialize to $\pi=-1$, this is precisely the Macdonald pairing, used to define the Macdonald symmetric functions.  See for example, \cite[(VI.1.5)]{Mac95}, where $q_1$ and $q_2$ are denoted $t$ and $q$, respectively.

If we fix a rank one lattice $L$ generated by $v$ with $\langle v, v \rangle = \grdim S(x,y)$, then Propositions~\ref{prop:hh-presentation} and~\ref{prop:he-presentation} give presentations of $\fh_L$ in terms of the graded dimensions of the symmetric and exterior powers of $S(x,y)$.  These graded dimensions can be computed explicitly, as shown by the following two propositions.

\begin{prop}
  If $V$ is a $\Z \times \Z \times \Z_2$-graded vector space with graded dimension $\grdim V = \frac{1+\pi q_2}{1-q_1}$, then
  \[
    \grdim S^k(V) = \frac{(1 + \pi q_2q_1^{k-1})(1 + \pi q_2q_1^{k-2}) \dotsm (1 + \pi q_2q_1)(1 + \pi q_2)}{(1 - q_1^k)(1 - q_1^{k-1}) \dotsm (1-q_1)}.
  \]
\end{prop}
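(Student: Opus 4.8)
The plan is to compute the generating function $F(z) = \sum_{k \ge 0} \grdim S^k(V)\, z^k$ in closed product form and then extract its coefficients by a functional equation. First I would use the value $\grdim V = \frac{1+\pi q_2}{1-q_1}$ to split $V$ into its even and odd parts: since the exponent of $\pi$ records parity, we have $\grdim V_\even = \frac{1}{1-q_1} = \sum_{s \ge 0} q_1^s$ and $\grdim V_\odd = \frac{\pi q_2}{1-q_1} = \sum_{s \ge 0} \pi q_2 q_1^s$. Thus $V_\even$ is one-dimensional in each tridegree $(s,0,0)$ and $V_\odd$ is one-dimensional in each tridegree $(s,1,1)$, for $s \ge 0$.

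Next I would invoke the standard description of the symmetric algebra of a super vector space, $S(V) = S(V_\even) \otimes \Lambda(V_\odd)$, where the variable $z$ counts total symmetric degree $k$. Each even basis vector of graded dimension $q_1^s$ contributes a geometric factor $\frac{1}{1 - q_1^s z}$, and each odd basis vector of graded dimension $\pi q_2 q_1^s$ contributes an exterior factor $1 + \pi q_2 q_1^s z$ (the $\pi$ recording that one odd generator is used), giving
\[
  F(z) = \prod_{s \ge 0} \frac{1 + \pi q_2 q_1^s z}{1 - q_1^s z}.
\]
This identity is to be read coefficient-wise in $z$, each coefficient lying in $\hat\Z_{q_1,q_2,\pi}$. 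The one genuinely delicate point is ensuring these infinite products are well defined in the completed ring: the coefficient of $z^k$ in $\prod_{s \ge 0}(1 - q_1^s z)$ is a power series in $q_1$ whose lowest term has degree $\binom{k}{2}$, so the finiteness condition defining $\hat\Z_{q_1,q_2,\pi}$ is exactly what makes each coefficient a legitimate element.

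Finally I would extract the coefficients via a recursion. Shifting the index $s \mapsto s+1$ gives $F(q_1 z) = \prod_{s \ge 1} \frac{1 + \pi q_2 q_1^s z}{1 - q_1^s z}$, whence the functional equation $(1 - z) F(z) = (1 + \pi q_2 z)\, F(q_1 z)$. Writing $F(z) = \sum_k c_k z^k$ with $c_k = \grdim S^k(V)$ and comparing coefficients of $z^k$ yields $c_k(1 - q_1^k) = c_{k-1}(1 + \pi q_2 q_1^{k-1})$. Since $c_0 = \grdim S^0(V) = 1$, an immediate induction gives
\[
  c_k = \prod_{j=1}^{k} \frac{1 + \pi q_2 q_1^{j-1}}{1 - q_1^j} = \frac{(1 + \pi q_2 q_1^{k-1}) \dotsm (1 + \pi q_2 q_1)(1 + \pi q_2)}{(1 - q_1^k)(1 - q_1^{k-1}) \dotsm (1 - q_1)},
\]
which is the claimed formula. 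The main effort is conceptual rather than computational: establishing the product formula for $F(z)$ and its well-definedness in $\hat\Z_{q_1,q_2,\pi}$. Once the functional equation is in hand, the remainder is a one-line induction.
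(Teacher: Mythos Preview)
Your proof is correct and follows essentially the same approach as the paper: establish the product formula $F(z) = \prod_{s \ge 0} \frac{1 + \pi q_2 q_1^s z}{1 - q_1^s z}$, derive the functional equation $(1-z)F(z) = (1+\pi q_2 z)F(q_1 z)$ by peeling off the $s=0$ factors, and extract the recursion $c_k = \frac{1 + \pi q_2 q_1^{k-1}}{1 - q_1^k} c_{k-1}$. Your version is in fact somewhat more careful than the paper's, since you justify the product formula via the decomposition $S(V) = S(V_\even) \otimes \Lambda(V_\odd)$ and address well-definedness in $\hat\Z_{q_1,q_2,\pi}$, whereas the paper simply asserts the product form.
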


\begin{proof}
  Since
  \[
    \sum_{k \ge 0} z^k \grdim S^k(V) = \prod_{n \ge 0} (1 + \pi q_2 q_1^nz) \prod_{n \ge 0} \frac{1}{1-q_1^nz},
  \]
  it suffices to show that
  \[
    \sum_{k \ge 0} \frac{(1 + \pi q_2q_1^{k-1})(1 + \pi q_2q_1^{k-2}) \dotsm (1 + \pi q_2q_1)(1 + \pi q_2)}{(1 - q_1^k)(1 - q_1^{k-1}) \dotsm (1-q_1)} z^k = \prod_{n \ge 0} (1 + \pi q_2 q_1^nz) \prod_{n \ge 0} \frac{1}{1-q_1^nz}.
  \]

  Define $c_k$, $k \ge 0$, by
  \[
    \sum_{k \ge 0} c_k z^k = \prod_{n \ge 0} (1 + \pi q_2 q_1^nz) \prod_{n \ge 0} \frac{1}{1-q_1^nz}.
  \]
  Clearly $c_0=0$.  Now,
  \begin{align*}
    \sum_{k \ge 0} c_k z^k
    &= \prod_{n \ge 0} (1 + \pi q_2 q_1^nz) \prod_{n \ge 0} \frac{1}{1-q_1^nz} \\
    &= \frac{1 + \pi q_2 z}{1-z} \prod_{n \ge 1} (1 + \pi q_2 q_1^nz) \prod_{n \ge 1} \frac{1}{1-q_1^nz} \\
    &= \frac{1 + \pi q_2 z}{1-z} \prod_{n \ge 0} (1 + \pi q_2 q_1^n(q_1z)) \prod_{n \ge 1} \frac{1}{1-q_1^n(q_1z)} \\
    &= \frac{1 + \pi q_2 z}{1-z} \sum_{k \ge 0} c_k (q_1z)^k.
  \end{align*}
  Thus
  \[
    (1-z) \sum_{k \ge 0} c_k z^k = (1 + \pi q_2z) \sum_{k \ge 0} c_k q_1^k z^k.
  \]
  Therefore, for $k \ge 1$, we have
  \[
    c_k - c_{k-1} = q_1^k c_k + \pi q_2 q_1^{k-1} c_{k-1}
    \implies c_k = \frac{1 + \pi q_2 q_1^{k-1}}{1-q_1^k} c_{k-1}.
  \]
  The result follows.
\end{proof}

\begin{prop}
  If $V$ is a $\Z \times \Z \times \Z_2$-graded vector space with graded dimension $\grdim V = \frac{1+\pi q_2}{1-q_1}$, then
  \[
    \grdim \Lambda^k(V) = \frac{(\pi q_2 + q_1^{k-1}) (\pi q_2 + q_1^{(k-2)}) \dotsm (\pi q_2 + 1)}{(1-q_1^k) (1-q_1^{k-1}) \dotsm (1-q_1)}.
  \]
\end{prop}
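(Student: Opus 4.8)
The plan is to mirror the proof of the preceding proposition for symmetric powers, the only genuinely new ingredient being the correct generating function for exterior powers of a \emph{super} vector space. First I would record that, for a super vector space, the roles of the even and odd parts are interchanged in $\Lambda^\bullet$ relative to $S^\bullet$: an even basis vector anticommutes with itself and so appears at most once in the exterior algebra (contributing a factor $(1 + \cdots z)$), while an odd basis vector supercommutes with itself and so may appear arbitrarily often (contributing a factor $(1 - \cdots z)^{-1}$). Reading off $\grdim V = \frac{1 + \pi q_2}{1-q_1}$ as one even basis vector of degree $q_1^n$ and one odd basis vector of (super)degree $\pi q_2 q_1^n$ for each $n \ge 0$, this yields the generating function
\[
  \sum_{k \ge 0} z^k \grdim \Lambda^k(V) = \prod_{n \ge 0}(1 + q_1^n z)\prod_{n \ge 0}\frac{1}{1 - \pi q_2 q_1^n z}.
\]
Note that this is exactly the generating function of the previous proposition with the even and odd factors swapped, reflecting the isomorphism $\Lambda^k(V) \cong S^k(\Pi V)$ up to parity bookkeeping.

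Next I would set $d_k = \grdim \Lambda^k(V)$ and extract a recurrence by the same factor-and-reindex argument as in the symmetric case. Pulling out the $n = 0$ factors $(1 + z)$ and $(1 - \pi q_2 z)^{-1}$ and substituting $q_1 z$ for $z$ in the remaining products gives the functional equation
\[
  (1 - \pi q_2 z) \sum_{k \ge 0} d_k z^k = (1 + z) \sum_{k \ge 0} d_k q_1^k z^k.
\]
Comparing the coefficients of $z^k$ for $k \ge 1$ yields $d_k(1 - q_1^k) = (\pi q_2 + q_1^{k-1}) d_{k-1}$, hence the recurrence
\[
  d_k = \frac{\pi q_2 + q_1^{k-1}}{1 - q_1^k}\, d_{k-1}.
\]
Together with $d_0 = 1$, iterating this recurrence gives precisely the claimed product formula.

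The main (and really only) obstacle is getting the super sign conventions right in the first step: one must be careful that it is the \emph{odd} basis vectors that produce the geometric-series factors $(1 - \pi q_2 q_1^n z)^{-1}$ and the \emph{even} ones that produce the linear factors $(1 + q_1^n z)$, exactly opposite to the symmetric-power case treated in the previous proposition. Once the generating function is written correctly, the remainder is a routine recurrence identical in structure to the preceding proof, so I would keep that part brief.
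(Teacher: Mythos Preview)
Your proposal is correct and follows essentially the same approach as the paper: write down the generating function $\sum_{k \ge 0} z^k \grdim \Lambda^k(V) = \prod_{n \ge 0}(1 + q_1^n z)\prod_{n \ge 0}(1 - \pi q_2 q_1^n z)^{-1}$, pull out the $n=0$ factors to obtain the functional equation $(1 - \pi q_2 z)\sum d_k z^k = (1+z)\sum d_k q_1^k z^k$, and read off the recurrence $d_k = \frac{\pi q_2 + q_1^{k-1}}{1 - q_1^k} d_{k-1}$. Your added paragraph justifying the super sign conventions for the exterior-power generating function is a welcome clarification that the paper omits.
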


\begin{proof}
  Since
  \[
    \sum_{k \ge 0} z^k \grdim S^k(V) = \prod_{n \ge 0} \frac{1}{1 - \pi q_2 q_1^nz} \prod_{n \ge 0} (1+q_1^nz),
  \]
  it suffices to show that
  \[
    \sum_{k \ge 0} \frac{(\pi q_2 + q_1^{k-1}) (\pi q_2 + q_1^{(k-2)}) \dotsm (\pi q_2 + 1)}{(1-q_1^k) (1-q_1^{k-1}) \dotsm (1-q_1)} z^k = \prod_{n \ge 0} \frac{1}{1 - \pi q_2 q_1^nz} \prod_{n \ge 0} (1+q_1^nz).
  \]

  Define $c_k$, $k \ge 0$, by
  \[
    \sum_{k \ge 0} c_k z^k = \prod_{n \ge 0} \frac{1}{1 - \pi q_2 q_1^nz} \prod_{n \ge 0} (1+q_1^nz).
  \]
  Clearly $c_0=0$.  Now,
  \begin{align*}
    \sum_{k \ge 0} c_k z^k
    &= \prod_{n \ge 0} \frac{1}{1 - \pi q_2 q_1^nz} \prod_{n \ge 0} (1+q_1^nz) \\
    &= \frac{1 + z}{1 - \pi q_2 z} \prod_{n \ge 1} \frac{1}{1 - \pi q_2 q_1^nz} \prod_{n \ge 0} (1+q_1^nz) \\
    &= \frac{1 + z}{1 - \pi q_2 z} \prod_{n \ge 0} \frac{1}{1 - \pi q_2 q_1^n(q_1z)} \prod_{n \ge 0} (1+q_1^n(q_1z)) \\
    &= \frac{1 + z}{1 - \pi q_2 z} \sum_{k \ge 0} c_k (q_1z)^k.
  \end{align*}
  Thus
  \[
    (1- \pi q_2z) \sum_{k \ge 0} c_k z^k = (1 + z) \sum_{k \ge 0} c_k q_1^k z^k.
  \]
  It follows that, for $k \ge 1$, we have
  \[
    c_k - \pi q_2 c_{k-1} = q_1^k c_k + q_1^{k-1} c_{k-1} \implies c_k = \frac{\pi q_2 + q_1^{k-1}}{1-q_1^k} c_{k-1}.
  \]
  The result follows.
\end{proof}

\subsection{Passing to the Jack limit} \label{subsec:Jack-limit-differential}

Setting $q = q_2 = q_1^k$ and $\pi=-1$ in \eqref{eq:Macdonald-pairing} yields the pairing \eqref{eq:specialized-Macdonald-pairing}.  The relationship between the choice $V=\C[x]/(x^k)$ of Section~\ref{subsec:Jack-pairing} and the $S(x,y)$ of Section~\ref{subsec:Macdonald-pairing} is as follows.  For $k \geq 0$, define a differential $\partial_k \colon S(x,y) \to S(x,y)$ by setting
\[
	\partial_k(y) = x^k,\quad \partial_k(x) = 0,
\]
and extending $\partial_k$ to the rest of $S(x,y)$ via the Leibnitz rule
$\partial_k(fg)= \partial_k(f)g + (-1)^{|f|}f \partial_k(g)$.  The differential $\partial_k$ endows $S(x,y)$ with the structure of a dg-algebra, which is quasi-isomorphic to its cohomology
\[
	H^*(S(x,y),\partial_k) \cong \C[x]/(x^k),
\]
where $\C[x]/(x^k)$ is regarded as a dg-algebra with zero differential.

In light of the above, one may view the process of ``turning on" the differential $\partial_k$ as a categorification of the process of specialising $q_2 = q_1^k$.


\bibliographystyle{alpha}
\bibliography{LicataRossoSavage}

\end{document}